\renewcommand\section{\@startsection{section}{1}{\z@}%
                       {-3\p@ \@plus -4\p@ \@minus -4\p@}%
                       {3\p@ \@plus 4\p@ \@minus 4\p@}%
                      {\normalfont\normalsize\centering\scshape}}
\author{Lashi Bandara}
\author{Alan McIntosh}
\title[The Kato square root problem on GBG vector bundles]{The Kato square root problem on vector bundles with generalised bounded geometry}
\date{\today}
\address{Lashi Bandara, Centre for Mathematics and its Applications, 
Australian National University, Canberra, ACT, 0200, Australia}
\urladdr{\href{http://maths.anu.edu.au/~bandara}{http://maths.anu.edu.au/~bandara}}
\email{\href{mailto:lashi.bandara@anu.edu.au}{lashi.bandara@anu.edu.au}}
\address{Alan McIntosh, Centre for Mathematics and its Applications,
Australian National University, Canberra, ACT, 0200, Australia }
\urladdr{\href{http://maths.anu.edu.au/~alan}{http://maths.anu.edu.au/~alan}}
\email{\href{mailto:alan.mcintosh@anu.edu.au}{alan.mcintosh@anu.edu.au}}
\keywords{
Kato square root problem, 
Square roots of elliptic operators,
Quadratic estimates, 
Holomorphic functional calculi, 
Dirac type operators, 
Generalised bounded geometry
}
\subjclass[2010]{58J05, 47F05, 47B44, 47A60}
\newtheorem{theorem}{Theorem}[section]
\newtheorem{corollary}[theorem]{Corollary}
\newtheorem{lemma}[theorem]{Lemma}
\newtheorem{proposition}[theorem]{Proposition}
\newtheorem{definition}[theorem]{Definition}
\newtheorem{remark}[theorem]{Remark}
\newcommand{\mdot}{\cdotp}
\newcommand{\cbrac}[1]{\left(#1\right)}
\newcommand{\bbrac}[1]{\left[#1\right]}
\newcommand{\dbrac}[1]{\left\{#1\right\}}
\newcommand{\modulus}[1]{\left|#1\right|}
\newcommand{\set}[1]{\dbrac{#1}}
\newcommand{\dom}{ {\mathcal{D}}}
\newcommand{\ran}{ {\mathcal{R}}}
\newcommand{\nul}{ {\mathcal{N}}}
\DeclareMathOperator{\card}{card}
\newcommand{\e}{\mathrm{e}}
\newcommand{\R}{\mathbb{R}}
\newcommand{\C}{\mathbb{C}}
\newcommand{\In}{\mathbb{Z}}
\newcommand{\Na}{\ensuremath{\mathbb{N}}}
\newcommand{\script}[1]{\mathscr{#1}}
\DeclareMathOperator{\re}{Re}			
\renewcommand{\emptyset}{\varnothing}
\newcommand{\union}{\cup}
\newcommand{\intersect}{\cap}
\newcommand{\rest}[1]{{{\lvert_{}}_{}}_{#1}}
\newcommand{\close}[1]{\overline{#1}}		
\newcommand{\ind}[1]{\raisebox{\depth}{\(\chi\)}_{#1}}	
\renewcommand{\epsilon}{\varepsilon}
\renewcommand{\phi}{\varphi}
\newcommand{\graph}{\script{G}}		
\newcommand{\tensor}{\otimes}
\newcommand{\comm}[1]{\bbrac{#1}}		
\newcommand{\norm}[1]{\left\| #1 \right\|}			
\newcommand{\spt}[1]{{\rm spt} {\text{ }}#1}	
\DeclareMathOperator{\tr}{tr}			
\DeclareMathOperator{\diam}{diam}		
\DeclareMathOperator{\len}{\ell}			
\DeclareMathOperator{\rad}{rad}			
\DeclareMathOperator{\divv}{div}		
\newcommand{\Christ}[2]{\Gamma^{#2}_{#1}}	
\newcommand{\Ric}{{\rm Ric}}			
\DeclareMathOperator{\inj}{inj} 		
\newcommand{\Tensors}[1][{}]{{\mathcal{T}}^{(#1)}}	
\newcommand{\Sect}{\mathbf{\Gamma}}		
\DeclareMathOperator{\proj}{\mathbf{P}}	
\newcommand{\tanb}{{\rm T}}		
\newcommand{\cotanb}{{\rm T}^\ast}	
\DeclareFontFamily{OT1}{restrictfont}{}
\DeclareFontShape{OT1}{restrictfont}{m}{n}{<-> fmvr8x}{}
\newcommand{\adj}[1]{{#1}^\ast}			
\newcommand{\extd}{{\rm d}}			
\newcommand{\inprod}[1]{\left\langle #1 \right\rangle}	
\newcommand{\grad}{\nabla}			
\newcommand{\conn}[1][{}]{{\grad_{{#1}}}}		
\DeclareMathOperator{\Lipp}{Lip}		
\newcommand{\bddlf}{\mathcal{L}} 	
\newcommand{\spec}{\sigma}		
\newcommand{\rset}{\rho}				
\newcommand{\Lp}[2][{}]{{\rm L}^{#2}_{\rm #1}}		
\newcommand{\Ck}[2][{}]{{\rm C}^{#2}_{\rm #1}}		
\newcommand{\Sob}[2][{}]{{\rm W}^{#2}_{\rm #1}}		
\newcommand{\SobH}[2][{}]{\Sob[#1]{#2,2}}
\newcommand{\ddelta}{\updelta}
\DeclareMathOperator{\sgn}{sgn}
\newcommand{\Hil}{\script{H}}			
\newcommand{\Banach}{\script{B}}			
\newcommand{\Lap}{\Delta}			
\newcommand{\Q}[1][{}]{Q_{#1}}			
\newcommand{\DyQ}{\script{Q}}			
\newcommand{\parent}[1]{\widehat{#1}}		
\newcommand{\brad}{\rho} 			
\newcommand{\scale}{\mathrm{t_S}}			
\newcommand{\hscale}{\mathrm{t_H}} 
\newcommand{\jscale}{\mathrm{J}}
\newcommand{\sC}{\script{C}}
\newcommand{\sN}{\script{N}}
\newcommand{\cE}{\mathcal{E}}
\newcommand{\cV}{\mathcal{V}}
\newcommand{\cM}{\mathcal{M}} 
\newcommand{\cW}{\mathcal{W}}
\newcommand{\Hol}{{\rm Hol}}
\newcommand{\Spa}{\mathcal{X}}
\newcommand{\mg}{\mathrm{g}}
\newcommand{\mh}{\mathrm{h}}
\newcommand{\tf}{\mathrm{f}}
\newcommand{\Poincare}{Poincar\'e~}		
\newcommand{\Av}{\mathcal{A}}
\newcommand{\pri}{\upgamma}
\newcommand{\Sq}{\mathcal{S}}
\newcommand{\Mul}{{\mathrm{M}}}
\newcommand{\Div}{\mathrm{L}}
\newcommand{\Carl}{\mathcal{C}}
\newcommand{\CBox}{\mathrm{R}}
\newcommand{\RNum}[1]{\uppercase\expandafter{\romannumeral #1\relax}}
\begin{document}

\maketitle
\vspace*{-2em}
\begin{abstract}
We consider smooth, complete Riemannian
manifolds which are
exponentially locally doubling.
Under a uniform Ricci curvature bound
and a uniform lower bound on injectivity radius,
we prove a Kato square root estimate 
for certain coercive operators
over the bundle of finite rank tensors.
These results are obtained as a special case
of similar estimates on smooth vector bundles 
satisfying a criterion
which we call \emph{generalised bounded geometry}.
We prove this by establishing quadratic estimates 
for perturbations of Dirac type operators on such bundles
under an appropriate set of assumptions.
\end{abstract}
\tableofcontents
\vspace*{-2em}

\parindent0cm
\setlength{\parskip}{\baselineskip}

\section{Introduction}

In this paper, we consider
the \emph{Kato square root problem}
for uniformly elliptic operators on smooth 
vector bundles $\cV$
over smooth, complete Riemannian manifolds $\cM$
which are at most exponentially locally doubling.
Let $\conn$ be a connection
on the bundle and $\mh$ its metric.
Define the uniformly elliptic operator 
$\Div_{A}u 
	= -a \divv (A_{11} \conn u) - a \divv(A_{10}u) 
	+ a A_{01} \conn u + a A_{00} u$
where $\divv = -\adj{\conn}$
and where the $a, A_{ij}$ are $\Lp{\infty}$
coefficients.
Under an appropriate bounded geometry 
assumption, we show that
$\dom(\sqrt{\Div_{A}}) = \dom(\conn) = \SobH{1}(\cV)$
and that
$\|\sqrt{\Div_{A}}u\| \simeq \norm{u}_{\SobH{1}}$
for all $u \in \SobH{1}(\cV)$.

The case of trivial, flat bundles
was considered 
by Morris 
in \cite{Morris3} (and his thesis \cite{Morris}).
In particular, he obtains solutions
to the Kato problem on Euclidean submanifolds
under  \emph{extrinsic} curvature bounds.
The novelty of our work is that
we dispense with the requirement of
an embedding and prove much more general
results under \emph{intrinsic} assumptions. 
We take the perspective of preserving the thrust
of the harmonic analytic argument
from the Euclidean context
and as a consequence, we are forced to
perform a  detailed and intricate  analysis of the geometry.

Our work utilises
the foundation laid
by Axelsson, Keith and
McIntosh in \cite{AKMC}, 
and the perspective
developed in \cite{AKM2}
by the same authors.
The ideas in both 
these papers
have their roots in
the solution of 
the Kato conjecture
by Auscher, Hofmann, Lacey,
McIntosh and Tchamitchian
in \cite{AHLMcT}.
See also the surveys
of Hofmann \cite{HofmannEx}, 
McIntosh \cite{Mc90},
the book by Auscher and
Tchamitchian \cite{AT},
and the recent survey
\cite{HMc} by Hofmann and McIntosh.

The idea of the authors of 
\cite{AKMC}
is to consider closed,
densely defined, nilpotent, 
operators
$\Gamma$, along with
perturbations $B_1,B_2$
and then to establish
quadratic estimates of
the form 
$$
\int_{0}^\infty \norm{t \Pi_B(I + t^2 \Pi_B)^{-1}u}^2 \frac{dt}{t}
	\simeq \norm{u}^2$$
where $\Pi_B = \Gamma + B_1 \adj{\Gamma} B_2.$
In \cite{AKM2}, 
the authors illustrate
that for \emph{inhomogeneous}
operators, it is enough 
to establish a certain
\emph{local} quadratic estimate, 
for which we need bounds on the integral 
from $0$ to $1$,
since the integral from $1$ to $\infty$
is straightforward in this case.
The proof of this quadratic
estimate proceeds by
reduction to a Carleson 
measure estimate.

The techniques
developed in \cite{AHLMcT}, 
\cite{AKMC}  and \cite{Morris3} 
rely upon being able to 
take averages of functions over subsets, and
defining constant vectors
in key aspects of the proof.
This is a primary obstruction
to generalising these techniques
to non-trivial bundles.
To circumvent this obstacle,
we formulate a
condition which we
call \emph{generalised bounded geometry}.
This condition captures
a uniform locally Euclidean structure
in the bundle.
The existence of a 
dyadic decomposition
(below a fixed scale)
provides a
decomposition of the manifold
in a way that
allows us to work on a fixed set of
coordinates in the bundle.
We can picture this 
decomposition of the bundle
as a sort of abstract polygon -
Euclidean regions
separated by a boundary of
null measure.
Under the condition of generalised bounded geometry, 
and using this decomposition, 
we are then able to adapt the arguments 
of \cite{Morris3}
and \cite{AKMC} 
in order to obtain a Kato square root estimate.

 The intuition behind
generalised bounded geometry
is the existence of
\emph{harmonic coordinates}
under Ricci
bounds, along with a uniform lower bound on injectivity radius.
 We use this 
to show that the bundle of
$(p,q)$ tensors satisfies
generalised bounded geometry. 
 As a consequence, 
we obtain a Kato square root
estimate for tensors
under a coercivity condition which
is automatically satisfied
for scalar-valued functions.
We highlight the scalar-valued version as a central
theorem of this paper.

\begin{theorem}[Kato square root problem for functions]
\label{Thm:Int:KatoFn}
Let $(\cM,\mg)$ be a smooth, complete Riemannian manifold with 
$\modulus{\Ric} \leq C$ and  $\inj(M) \geq \kappa > 0$.
Suppose that
the following ellipticity
condition holds:
there exist $\kappa_1, \kappa_2 > 0$ such that 
\begin{align*}
\re\inprod{a u, u}  &\geq \kappa_1 \norm{u}^2 
\qquad\text{and}\\
\re(\inprod{A_{11} \nabla v, \nabla v} +\inprod{A_{10}  v, \nabla v}+\inprod{A_{01} \nabla v,  v}+\inprod{A_{00} v,  v} )&\geq \kappa_2 \norm{v}_{\SobH{1}}^2
\end{align*} 
for all $u \in \Lp{2}(\cM)$
and $v \in \SobH{1}(\cM)$.
Then, 
$\dom(\sqrt{\Div_A}) = \dom(\conn) = \SobH{1}(\cM)$
and $\|{\sqrt{\Div_A} u}\|\simeq \norm{\conn u} + \norm{u} = \norm{u}_{\SobH{1}}$
for all $u \in \SobH{1}(\cM)$.
\end{theorem}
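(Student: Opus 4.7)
The plan is to cast $\Div_A$ in the first-order framework of Axelsson--Keith--McIntosh and then to apply the general quadratic estimate for perturbations of Dirac-type operators on bundles satisfying generalised bounded geometry (GBG). On the Hilbert space $\Hil = \Lp{2}(\cM) \oplus \Lp{2}(\cotanb\cM)$ introduce the closed, densely defined, nilpotent operator
\[
\Gamma = \begin{pmatrix} 0 & 0 \\ \conn & 0 \end{pmatrix},
\]
with Hilbert-space adjoint $\Gamma^\ast$ carrying $-\divv$ in its upper-right slot, and let $B_1, B_2$ be the bounded multiplication operators packaging $a$ and the matrix $(A_{ij})$. A short computation identifies $\Div_A$ with the top-left entry of $\Pi_B^2$, where $\Pi_B := \Gamma + B_1 \Gamma^\ast B_2$, so the Kato estimate $\norm{\sqrt{\Div_A} u} \simeq \norm{u}_{\SobH{1}}$ is equivalent to a bounded holomorphic functional calculus for $\Pi_B$. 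By the inhomogeneous reduction of \cite{AKM2}, this functional calculus reduces further to the local quadratic estimate
\[
\int_0^1 \norm{t\Pi_B(I + t^2 \Pi_B^2)^{-1} u}^2 \frac{dt}{t} \lesssim \norm{u}^2,
\]
since the tail $\int_1^\infty$ is handled by spectral theory together with the coercivity of $B_1, B_2$.

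To deploy this abstract machinery one must verify GBG on the bundles in play, namely the scalar bundle $\cM\times\C$ and $\cotanb\cM$. Under $\modulus{\Ric}\leq C$ and $\inj(\cM)\geq\kappa$, the Anderson--Jost--Karcher harmonic coordinate estimates yield a uniform radius $r_H>0$ and a harmonic chart on every ball of radius $r_H$ in which the metric and its inverse are bounded in $C^{1,\alpha}$ uniformly across $\cM$. In such charts the cotangent bundle (and more generally the tensor bundles) trivialises with uniformly comparable Riemannian fibre metrics, which is precisely the GBG structure at scale $r_H$; the exponential local doubling then supplies a dyadic cube decomposition below scale $r_H$ compatible with these local trivialisations, and the coercivity hypothesis provides the lower frame bounds required of $B_1, B_2$ at the level of $\Pi_B$.

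With these inputs one attacks the local quadratic estimate via the Carleson measure strategy originating in \cite{AHLMcT} and developed in \cite{AKMC, Morris3}. After splitting $t\Pi_B(I+t^2\Pi_B^2)^{-1}$ into a principal part approximated by cube-averages in the chosen GBG frames plus an off-diagonal remainder controlled by Gaffney-type resolvent bounds on the manifold, the principal part contribution is reduced to a Carleson inequality, itself established by a stopping-time construction with test sections adapted to each GBG patch. The principal obstacle is precisely this last step: the Euclidean idea of testing against a constant vector must be replaced throughout by testing against sections that are constant only in the chosen GBG frame on each dyadic cube, and one must keep all constants uniform as cubes straddle the boundary between GBG patches while ensuring that averaging and projection operations remain compatible with the dyadic hierarchy at every scale $t\in(0,1]$. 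This is the detailed geometric analysis announced in the introduction and is the true technical cost of the intrinsic approach.
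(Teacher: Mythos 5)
Your high-level plan faithfully reproduces the paper's overall architecture: cast $\Div_A$ as a block of a first-order system $\Pi_B = \Gamma + B_1\adj{\Gamma}B_2$, reduce to a bounded holomorphic functional calculus, then to a local quadratic estimate via the Carleson measure strategy, with GBG extracted from Anderson-type harmonic coordinate estimates under the Ricci and injectivity bounds. That matches \S\ref{Sect:App}, \S\ref{Sec:Harm} and Corollary \ref{Cor:App:GBGExt} of the paper. However, there is a genuine gap, and it is exactly the piece that constitutes the paper's actual proof of this theorem once the general machinery is in place.

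The abstract quadratic estimate (Theorem \ref{Thm:Ass:Main}) requires, in addition to GBG, the dyadic \Poincare inequality and cancellation, a coercivity hypothesis (H8)-2 of the form $\norm{\Xi u}^2 + \norm{u}^2 \lesssim \norm{\Pi u}^2$ on $\dom(\Pi)\intersect\ran(\Pi)$. Tracing through the setup of \S\ref{Sect:App}, this amounts to a Riesz-transform-type bound
\[
\norm{\conn^2 u} \lesssim \norm{(I + \Lap)u}, \qquad u \in \dom(\Lap),
\]
which appears as hypothesis (vi) of Theorem \ref{Thm:App:KatoVB} and is carried as an extra assumption \eqref{Riesz} in Theorem \ref{Thm:App:KatoTen}. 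Your proposal never mentions this estimate, and without it the first principal term estimate in the Carleson argument cannot be closed (it is precisely what is invoked at the final step of that estimate). The paper's proof of Theorem \ref{Thm:Int:KatoFn} is short \emph{because} its only remaining work is to verify this bound for scalar functions: integrate the Weitzenb\"ock--Bochner identity
\[
\inprod{\Lap(\conn f), \conn f} = \tfrac{1}{2}\Lap(\modulus{\conn f}^2) + \modulus{\conn^2 f}^2 + \Ric(\conn f, \conn f)
\]
over $\cM$ for $f\in\Ck[c]{\infty}(\cM)$, absorb the Ricci term using $\modulus{\Ric}\leq C$, and then use $\dom(\Lap)\subset\SobH{2}(\cM)$ together with the density of $\Ck[c]{\infty}(\cM)$ in $\dom(\Lap)$ (Proposition~3.3 of \cite{Hebey}, which again needs the Ricci and injectivity bounds). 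This is a missing idea in your proposal, not merely an omitted detail: the Ricci bound enters your argument only through harmonic coordinates, but in the paper it plays a second, independent role through Bochner.

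A secondary issue: you set $\Hil = \Lp{2}(\cM)\oplus\Lp{2}(\cotanb\cM)$ with $\Gamma = \begin{pmatrix}0 & 0\\ \conn & 0\end{pmatrix}$. That is the \emph{homogeneous} framework, whose $\Pi^2$ has top-left entry $\Lap$, not $I+\Lap$, and which cannot represent the inhomogeneous $\Div_A$ with its $A_{00}, A_{01}, A_{10}$ terms. The paper works on $\Lp{2}(\cM)\oplus\Lp{2}(\cM)\oplus\Lp{2}(\cotanb\cM)$ with $S = \begin{pmatrix}I\\ \conn\end{pmatrix}$ and $\Gamma = \begin{pmatrix}0&0\\S&0\end{pmatrix}$ precisely so that the identity block produces the lower-order terms and the $I$ in $I+\Lap$. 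You cite the inhomogeneous reduction of \cite{AKM2} but did not implement the extra factor it requires.
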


We also prove
Lipschitz estimates
for small perturbations of our operators,
similar to those in \S6 of \cite{AKMC},
which are a direct consequence of
considering operators with \emph{complex} 
bounded measurable coefficients. 

In \cite{B}, the first author
proves quadratic estimates
for operators $\Pi_B$
on doubling measure metric 
spaces under an appropriate set of 
assumptions.
We conclude this paper
by demonstrating how to extend
the quadratic estimates which
we obtain on a manifold,
to the more general setting
of a complete metric space
equipped with a Borel-regular
measure that is exponentially locally
doubling.

\section*{Acknowledgements}

The authors appreciate the support of the Centre for Mathematics and its Applications 
at the Australian National University, Canberra, where this project was undertaken. 
The first author was supported through an Australian Postgraduate Award
and through the Mathematical Sciences Institute, Australian National University, Canberra.
The second author gratefully acknowledges support from the 
Australian Government through the Australian Research Council. 

The authors thank Andrew Morris for helpful conversations and suggestions.
\section{Preliminaries}\label{prelim}

\subsection{Notation}
Throughout this paper, we use the
Einstein summation convention. That is,
whenever there is a  repeated lowered index 
and a raised index (or conversely), 
we assume summation over that
index. 
By $\Na$ we denote natural 
numbers not including $0$
and we let $\In_{+} = \Na \union \set{0}$.
 We denote the integers by $\In$. 
We take the liberty
to sometimes write $a \lesssim b$ 
for two real quantities $a$ and $b$.
By this, we mean that there is
a constant $C > 0$ such that
$a \leq C b$. By $a \simeq b$ 
we mean that $a \lesssim b$
and $b \lesssim a$.
For a function (and indeed,
a section) $f$,
we denote its support by $\spt f$.
We denote an open ball centred at $x$ 
of radius $r$ by $B(x,r)$.
The radius of a ball $B$ (open or closed) 
is denoted by $\rad(B)$.
For $\Omega \subset \cM$,
we denote the diameter of $\Omega$ 
by $\diam \Omega = \sup\set{ d(x,y):\, x,y\in\Omega}$.
 
\subsection{Manifolds and vector bundles}
 
In this section, we introduce terminology
that allows us to describe the class
of manifolds in which we obtain our results. 
Furthermore, we introduce the function spaces
that we shall use. We also prove a key 
result that allows us to
construct Sobolev spaces on vector bundles. 

Let $\cM$ be a smooth, 
complete Riemannian manifold with 
metric $\mg$ and Levi-Civita connection $\conn$.
Let $d\mu$ denote the volume measure induced
by $\mg$. We follow
the notation of \cite{Morris3} in the following
definition.

\begin{definition}[Exponential volume growth]
\index{Exponential volume growth}
We say that
$\cM$ has \emph{exponentially locally doubling volume growth}
if there exists $c \geq 1,\ \kappa, \lambda \geq 0$
such that
\begin{equation*}
0 < \mu(B(x,tr)) \leq 
c t^\kappa \e^{\lambda t r} \mu(B(x,r)) < \infty
\tag{$\text{E}_{\text{loc}}$}
\label{Def:Pre:Eloc}
\end{equation*}
for all $t \geq 1,\ r > 0$ and $x \in \cM$.
\end{definition}

Let $\cV$ be a smooth 
vector bundle of rank $N$
with metric $\mh$ and connection $\conn$.
Let $\uppi_{\cV}: \cV \to \cM$ denote 
the canonical projection.
Since we are interested in considering
``sections'' which may have low regularity,
we deviate from the usual definition of 
$\Sect(\cV)$ by dropping the requirement
that they be differentiable. More precisely,
we write $\Sect(\cV)$ to be the space
of measurable 
functions $\omega:\cM \to \cV$ such that
$\omega(x) \in \uppi_{\cV}^{-1}(x)$. We impose
no regularity assumptions (other than measurability)
and we call 
these \emph{sections} of $\cV$.
We write $\Lp[loc]{1}(\cV) = \Lp[loc]{1}(\cM,\cV) $ to denote
the sections $\gamma \in \Sect(\cV)$
such that
$$ \int_{K} \modulus{\gamma(x)}_{\mh(x)}\ d\mu(x) < \infty$$
for each compact $K \subset \cM$.
Similarly, we define
$\Lp{2}(\cV)$. The spaces $\Ck{k}(\cV)$
are then the $k$-differentiable sections
and $\Ck[c]{k}(\cV)$ are $k$-differentiable 
sections with compact support.
The following proposition is necessary 
to define Sobolev spaces
on $\cV$.
 
\begin{proposition}
\label{Prop:Pre:ConnClosed}
The connection $\conn: \Ck{\infty} \intersect \Lp{2}(\cV) 
\to \Lp{2}(\cotanb \cM \tensor \cV)$ is 
a densely defined, closable operator.
\end{proposition}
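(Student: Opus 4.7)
The plan is to verify dense definition by the inclusion $\Ck[c]{\infty}(\cV) \subset \Ck{\infty} \intersect \Lp{2}(\cV)$ and the standard density of compactly supported smooth sections in $\Lp{2}(\cV)$, which holds via partitions of unity and local mollification on any smooth manifold. Closability is then obtained by the usual formal-adjoint argument.

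First, I would construct a formal adjoint $\divv \colon \Ck[c]{\infty}(\cotanb\cM \tensor \cV) \to \Ck[c]{\infty}(\cV)$ of $\conn$. This is purely local: over a chart trivialising $\cV$, write $\conn = d + A$ for smooth connection $1$-forms $A$, express the pairings induced by $\mg$ and $\mh$ in terms of their smooth local densities, and integrate by parts via the divergence theorem to read off $\divv$ as a first-order linear differential operator. Patching with a partition of unity produces a global $\divv$ satisfying
\begin{equation*}
\int_{\cM} \inprod{\conn u, \omega} \, d\mu = \int_{\cM} \inprod{u, \divv \omega} \, d\mu
\end{equation*}
for all $u \in \Ck[c]{\infty}(\cV)$ and $\omega \in \Ck[c]{\infty}(\cotanb\cM \tensor \cV)$. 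This identity extends to all $u \in \Ck{\infty} \intersect \Lp{2}(\cV)$ by inserting a cutoff $\chi \in \Ck[c]{\infty}(\cM)$ with $\chi \equiv 1$ near $\spt \omega$: then $\chi u \in \Ck[c]{\infty}(\cV)$ and $\conn(\chi u) = \chi \conn u + d\chi \tensor u$ agrees with $\conn u$ on $\spt \omega$, so the compactly supported identity applied to $\chi u$ yields the extended one.

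With the by-parts formula in hand, closability is routine. Given $u_n \in \Ck{\infty} \intersect \Lp{2}(\cV)$ with $u_n \to 0$ in $\Lp{2}(\cV)$ and $\conn u_n \to v$ in $\Lp{2}(\cotanb\cM \tensor \cV)$, for every test $\omega \in \Ck[c]{\infty}(\cotanb\cM \tensor \cV)$ we have $\divv \omega \in \Ck[c]{\infty}(\cV) \subset \Lp{2}(\cV)$, and hence
\begin{equation*}
\inprod{v, \omega} = \lim_{n \to \infty} \inprod{\conn u_n, \omega} = \lim_{n \to \infty} \inprod{u_n, \divv \omega} = 0.
\end{equation*}
Density of $\Ck[c]{\infty}(\cotanb\cM \tensor \cV)$ in $\Lp{2}(\cotanb\cM \tensor \cV)$ forces $v = 0$, which is the criterion for closability.

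The only mildly non-trivial point is the construction of $\divv$ without any metric-compatibility assumption on $\conn$. Because the metrics and local connection coefficients are smooth, this amounts to bookkeeping in local coordinates rather than a substantive obstacle, and produces a genuine first-order linear differential operator. The remainder of the proof is purely functional analytic.
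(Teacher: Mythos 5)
Your proof is correct, but it takes a genuinely different route from the paper's. You establish closability by exhibiting a densely defined formal adjoint: construct $\divv$ on $\Ck[c]{\infty}(\cotanb\cM\tensor\cV)$ by local integration by parts and a partition of unity, extend the duality pairing to $u\in\Ck{\infty}\intersect\Lp{2}(\cV)$ by a cutoff argument, and then read off $v=0$ from $\inprod{v,\omega}=\lim\inprod{u_n,\divv\omega}=0$ for all test $\omega$. This is the standard functional-analytic criterion (closability of $T$ from dense definition of a restriction of $\adj{T}$), is self-contained, and correctly avoids assuming metric compatibility of $\conn$, which the proposition does not require. The paper instead reduces to the scalar case: it covers $\cM$ by countably many relatively compact coordinate neighbourhoods $K_j$, chooses a smooth orthonormal frame $\set{e^i}$ on each, writes $\conn u_n = \conn(u_n)_i\tensor e^i + (u_n)_i\,\conn e^i$, uses the uniform bound $\sup_l|\conn e^l|\le C_j$ on $\close{K_j}$ to show $\conn(u_n)_i\to v_i$ and $(u_n)_i\to 0$ in $\Lp{2}(K_j)$, and then invokes closability of $\conn=\extd$ on scalar functions to conclude $v_i=0$. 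The paper's route trades the construction of $\divv$ for reliance on the scalar result as a black box and is arguably more in the geometric spirit of the rest of the paper, whereas yours is the cleaner abstract argument; both are valid and of comparable length.
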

\begin{proof}
That $\Ck[c]{\infty}(\cV)$
is dense in $\Lp{2}(\cV)$
is shown by a mollification argument after covering the manifold by countably many compact
sets corresponding to coordinate charts.
Since $\Ck[c]{\infty}(\cV) \subset \Ck{\infty} \intersect \Lp{2}(\cV)$,
we have that $\conn$ is a densely defined operator.

We show that $\conn$ is a closable operator by reduction
to the well known fact that $\conn = \extd$ is closable
on scalar-valued functions. 
Fix $u_n \in \Ck{\infty} \intersect \Lp{2}(\cV)$
such that $u_n \to 0$ and $\conn u_n \to v$.
For each $x \in \cM$, we can find an 
open set $K_x$ such that $\close{K_x}$
is compact and with a local trivialisation 
$\psi_x : \close{K_x} \times \C^N \to \uppi_{\cV}^{-1}(\close{K_x})$.
Let $\set{K_x}$ be a collection of such sets
and let $\set{K_j}$ denote a countable
subcover. Now, fix $K_j$ and
let $\set{e^i}$ denote an orthonormal 
frame (by a Gram-Schmidt procedure) 
in $\close{K_j}$. 
Write $u_n = (u_n)_i e^i$
and note that 
$$
\conn u_n = \conn (u_n)_i \tensor e^i + (u_n)_i \conn e^i$$
and therefore,
$$
\sum_{i} \modulus{\conn (u_n)_i - v_i}^2 
	= \modulus{\conn (u_n)_i \tensor e^i - v}^2
	\leq \modulus{\conn u_n - v}^2 + \sup_{l} \modulus{\conn e^l}^2 \modulus{u_n}^2.$$
Since by assumption $\close{K_i}$ is compact
and the basis $e^i$ is smooth,
we have that $\sup_{l} \modulus{\conn e^l}^2 \leq C_j$
for some $C_j$ dependent on $K_j$. Thus,
$$\norm{ \conn(u_n)_i - v_i}^2_{\Lp{2}(K_j)} \to 0.$$
Thus, we have that 
$(u_n)_i \to 0$ and $\conn (u_n)_i \to v_i$
in $\Lp{2}(K_j)$, 
and by the closability of $\conn$
on functions, we have that $v_i = 0$ almost everywhere
in $K_j$. Thus, $v = 0$ almost everywhere in
$K_j$ and consequently $v = 0$ almost everywhere in $\cM$.
This proves that $\conn$ is a closable operator.
\end{proof} 

As a consequence of this proposition, 
we can define the Sobolev space
$\SobH{1}(\cV)$
as the completion of functions
$\Ck{\infty} \intersect \Lp{2}(\cV)$
in the graph norm of $\conn$.  
The closure of $\conn$ is then denoted by the same symbol, namely
$\conn: \SobH{1}(\cV) \subset \Lp{2}(\cV) \to \Lp{2}(\cotanb \cM \tensor \cV)$.

The higher order Sobolev 
spaces $\SobH{k}(\cV)$
are defined as subsets of $\SobH{1}(\cV)$
in the usual manner.
To keep some accord with tradition,
when we consider the situation
$\cV = \C$,
we write $\Lp{2}(\cM)$ in place of $\Lp{2}(\cV)$
and similarly for the function spaces $\Ck{k},\ \Ck[c]{k}$
and $\SobH{k}$.

We also highlight that we have the following
important density result.
\begin{proposition}
$\Ck[c]{\infty}(\cV)$ is dense in $\SobH{1}(\cV)$.
\end{proposition}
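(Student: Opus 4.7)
The plan is to reduce to the smooth case and then truncate. Since $\SobH{1}(\cV)$ is defined as the completion of $\Ck{\infty} \intersect \Lp{2}(\cV)$ in the graph norm of $\conn$, it suffices to show that every $u \in \Ck{\infty} \intersect \Lp{2}(\cV)$ with $\conn u \in \Lp{2}(\cotanb \cM \tensor \cV)$ is the $\SobH{1}$-limit of a sequence from $\Ck[c]{\infty}(\cV)$. The natural candidate is $u_n := \eta_n u$, where $\eta_n \in \Ck[c]{\infty}(\cM)$ is a sequence of scalar cutoffs exhausting $\cM$.

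First I would construct the cutoffs. Fix a basepoint $x_0 \in \cM$ and, using geodesic completeness of $(\cM,\mg)$, build $\eta_n \in \Ck[c]{\infty}(\cM)$ with $0 \leq \eta_n \leq 1$, $\eta_n \equiv 1$ on $B(x_0,n)$, $\spt \eta_n \subset B(x_0,2n)$, and $\sup_{x \in \cM} |\extd \eta_n(x)| \leq C$ uniformly in $n$. The standard route is to start with the $1$-Lipschitz function $\max\{0,\min\{1,(2n - d(\cdot,x_0))/n\}\}$ and smooth it chart by chart via a partition of unity (exactly the type of covering and mollification argument already used in the proof of Proposition \ref{Prop:Pre:ConnClosed}).

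Second, I would verify convergence. Since $u \in \Ck{\infty}$ and $\eta_n \in \Ck[c]{\infty}(\cM)$, the product $u_n = \eta_n u$ lies in $\Ck[c]{\infty}(\cV)$. The Leibniz rule gives
\[
\conn u_n - \conn u = (\eta_n - 1)\conn u + \extd \eta_n \tensor u,
\]
and pointwise $(\eta_n - 1)\conn u \to 0$ with $|(\eta_n - 1)\conn u| \leq |\conn u| \in \Lp{2}$, while $|\extd \eta_n \tensor u| \leq C|u|\, \mathbf{1}_{\cM \setminus B(x_0,n)}$ tends to zero pointwise since $\cM = \bigcup_n B(x_0,n)$. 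Dominated convergence then delivers $\conn u_n \to \conn u$ in $\Lp{2}(\cotanb \cM \tensor \cV)$, and the analogous (easier) argument yields $u_n \to u$ in $\Lp{2}(\cV)$.

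The main obstacle is the construction of the smooth cutoffs $\eta_n$ with a uniform gradient bound, since no curvature or injectivity radius assumptions are in force at this point in the paper: direct mollification of $d(\cdot,x_0)$ fails across the cut locus and in regions where the local geometry is bad. The workaround is to perform the smoothing locally in coordinate charts using a locally finite partition of unity and verify that the uniform Lipschitz constant of the underlying Lipschitz cutoff survives mollification; this is standard, but it is the only step that uses completeness of $\cM$ in an essential way.
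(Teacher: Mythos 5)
Your proof is correct and follows the same broad strategy as the paper (multiply by smooth compactly supported scalar cutoffs, then use the Leibniz rule), but with two small differences worth noting. First, the paper cuts off an \emph{arbitrary} $u \in \SobH{1}(\cV)$, reducing to the compactly supported case, and must then run a second, separate mollification step to obtain a smooth compactly supported approximant. You instead start from $u \in \Ck{\infty}\intersect\Lp{2}(\cV)$ with $\conn u \in \Lp{2}$, which is dense in $\SobH{1}(\cV)$ by the very definition of the space as a completion, so that $\eta_n u$ is automatically in $\Ck[c]{\infty}(\cV)$ and no further mollification is required. That is a genuine, if modest, simplification. Second, the paper builds cutoffs with $\sup\modulus{\conn\eta_r} \to 0$ and estimates the error term by $\sup\modulus{\conn\eta_r}\,\norm{u} \to 0$, whereas you only assume a \emph{uniform} gradient bound and close the argument with dominated convergence, exploiting that $\extd\eta_n$ vanishes on $B(x_0,n)$; both routes work, and yours uses a slightly weaker input on the cutoffs (though in fact your explicit cutoff $\max\set{0,\min\set{1,(2n-d(\cdot,x_0))/n}}$ has Lipschitz constant $\lesssim 1/n$, so it would also satisfy the paper's stronger decay condition). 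As for the obstacle you flag — constructing smooth cutoffs with uniformly bounded gradient on a complete manifold with no curvature or injectivity assumptions — the paper assumes exactly the same thing without comment; this is the standard Greene--Wu type smooth approximation of the $1$-Lipschitz function $d(\cdot,x_0)$, and your awareness that it is the one point where completeness enters essentially is accurate.
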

\begin{proof}
Since we assume that $\cM$ is complete, 
the Hopf-Rinow theorem guarantees that
closed balls are compact. Fix some centre $x \in \cM$
and consider closed balls $\close{B(x,r)}$.
For such balls, we can find $\eta_r \in \Ck[c]{\infty}(\cV)$
such that $\spt \eta_r$ is compact,
$0 \leq \eta \leq 1$, $\eta_r \equiv 1$ on $B(x,r)$ and
$\sup \modulus{\conn \eta_r} \to 0$ as $r \to \infty$.
Now, for any $u \in \SobH{1}(\cV)$,
$\eta_r u$ is compactly supported. Also,
$$ 
\conn(\eta_r u) = \conn \eta_r \tensor u + \eta_r \conn u$$
and 
$$
 \norm{\conn(\eta_r u) - \conn u} 
	\leq  \norm{\conn \eta_r \tensor u } 
		+  \norm{\eta_r \conn u - \conn u}
	\leq \sup_{x \in \cM} \modulus{\conn \eta_r} \norm{u} 
		+  \norm{\eta_r \conn u - \conn u} \to 0$$
as $r \to \infty$.
Thus, it suffices to consider $u \in \SobH{1}(\cV)$
with $\spt u$ compact. But for each such $u$,
an easy mollification argument in each 
local trivialisation  will yield a sequence $u_n \in \Ck[c]{\infty}(\cV)$
such that $\norm{u_n - u}_{\SobH{1}} \to 0$. 
\end{proof}

\begin{remark}
We remark that this proof does not generalise to higher order
Sobolev spaces. In fact, even for $k = 2$,
the best known result 
for the density of $\Ck[c]{\infty}(\cM)$
in $\SobH{2}(\cM)$ 
is under uniform lower bounds on injectivity radius
along with uniform lower bounds on Ricci curvature.
See \S3 of \cite{Hebey} and in particular, 
Proposition 3.3 in \cite{Hebey}.
\end{remark} 

For $f \in \Lp[loc]{1}(\cM)$ (i.e. a function), we define
the \emph{average} of $f$ on some
measurable subset $A \subset \cM$
with $0 < \mu(A) < \infty$ by 
$ f_A = \fint_{A} f\ d\mu = \frac{1}{\mu(A)} \int_{A} f\ d\mu.$

In what follows, we  assume that 
$\cM$ satisfies (\ref{Def:Pre:Eloc})
unless stated otherwise.

\subsection{Generalised Bounded Geometry}
\label{Sect:Prelim:GBG}
 
The harmonic analytic  techniques  we employ
in the main proof, along with some of the
assumptions we make on the operators under 
 consideration, require us to capture a uniform
locally Euclidean structure in the underlying
vector bundle. The concept which
we describe here is
motivated by the fact that 
injectivity radius bounds on a manifold,
coupled with appropriate curvature
bounds, give \emph{bounded geometry}
on the $(p,q)$ tensor bundle.
It  provides us with the framework for applying a dyadic decomposition in order
to construct a system of global
coordinates (no longer smooth), thus allowing us to define
key quantities and to construct proofs.

Recalling that $\cV$ is equipped with a metric $\mh$, we make the
 following definition.

\begin{definition}[Generalised Bounded Geometry]\label{GBG}
\label{Def:Pre:GBG}
\index{Generalised Bounded Geometry}
Suppose there exists $\brad > 0$, $C \geq 1$,
such that
for each $x \in \cM$, there exists a local trivialisation 
$\psi_x: B(x,\brad) \times \C^N \to \uppi_{\cV}^{-1}(B(x,\brad))$ 
satisfying
$$
C^{-1} I \leq \mh \leq C I$$
in the basis $\set{e^i = \psi_x (y,\tilde{e}^i)}$,
where $\set{\tilde{e}^i}$ is the standard
orthonormal basis for $\C^N$.
Then, we say that $\cV$ has \emph{generalised bounded geometry}
or \emph{GBG}. 
We call $\brad$ the \emph{GBG radius},
and local trivialisations $\psi_x$ the \emph{GBG charts}. 
\end{definition}

Since we can always take $\brad$ to be
as small as we like, we assume
that $\brad \leq 5$.

For the convenience of the reader
we quote Proposition 4.2 from \cite{Morris3}.

\begin{theorem}[Existence of a truncated dyadic structure]
\label{Thm:Dya:Christ}
There exists a countable collection of open subsets
$\set{\Q[\alpha]^k \subset \cM: \alpha \in I_k,\ k \in \In_{+}}$,
$z_\alpha^k \in \Q[\alpha]^k$ (called the \emph{centre} of $\Q[\alpha]^k$), 
index sets $I_k$ (possibly finite), 
and constants $\delta \in (0,1)$, 
$a_0 > 0$, $\eta > 0$ and $C_1, C_2 < \infty$ satisfying:
\begin{enumerate}[(i)]
\item for all $k \in \In$,
	$\mu(\cM \setminus \union_{\alpha} \Q[\alpha]^k) = 0$,
\item if $l \geq k$, then either $\Q[\beta]^l \subset \Q[\alpha]^k$
	or $\Q[\beta]^l \intersect \Q[\alpha]^k = \emptyset$,
\item for each $(k,\alpha)$ and each $l < k$
	there exists a unique $\beta$ such that
	$\Q[\alpha]^k \subset \Q[\beta]^l$,
\item $\diam \Q[\alpha]^k < C_1 \delta^k$,
\item $ B(z_\alpha^k, a_0 \delta^k) \subset \Q[\alpha]^k$,  
\item for all $k, \alpha$ and for all $t > 0$, 
	$\mu\set{x \in \Q[\alpha]^k: d(x, \cM\setminus\Q[\alpha]^k) \leq t \delta^k} \leq C_2 t^\eta \mu(\Q[\alpha]^k).$
\end{enumerate} 
\end{theorem}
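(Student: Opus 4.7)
The plan is to mimic Christ's construction of dyadic cubes, but exploit the fact that the statement is \emph{truncated} (indices $k \in \In_+$, so scales $\delta^k \leq 1$). On the bounded scales $r \leq 1$ the exponential local doubling condition \eref{Def:Pre:Eloc} yields a uniform doubling constant $C_d$, and this is all that Christ's argument actually requires. I would fix $\delta \in (0,1)$ small (depending on $C_d$) and $a_0 \simeq \delta$, to be tuned later.

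First I would build a nested family of maximal separated nets. Inductively, for each $k \in \In_+$, choose a maximal $a_0 \delta^k$-separated subset $X_k = \set{z_\alpha^k:\alpha \in I_k}$ of $\cM$ with $X_{k-1} \subset X_k$; maximality gives the covering property $\cM = \Union_\alpha B(z_\alpha^k, a_0 \delta^k)$ and separation gives $d(z_\alpha^k, z_\beta^k) \geq a_0 \delta^k$. For every new centre $z_\alpha^k \notin X_{k-1}$, fix once and for all an ancestor $\pi(\alpha) \in I_{k-1}$ minimising $d(z_\alpha^k, z_{\pi(\alpha)}^{k-1})$, with an arbitrary tie-breaking total order on $I_{k-1}$.

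Second, I would define pre-cubes by a Voronoi-type partition of $\cM$: let $\tilde\Q[\alpha]^k$ consist of those $x$ for which $z_\alpha^k$ is the nearest (under the same total order) element of $X_k$. These pre-cubes are pairwise disjoint, cover $\cM$ up to the measure-zero set of equidistant points, and satisfy $B(z_\alpha^k, a_0\delta^k/2) \subset \tilde\Q[\alpha]^k \subset B(z_\alpha^k, C_1 \delta^k)$ for some $C_1 = C_1(\delta)$ (using that any point is within $a_0\delta^k$ of some centre). To obtain (ii)--(iii), I would then define $\Q[\alpha]^k$ as the union of all $\tilde\Q[\gamma]^l$ with $l \geq k$ whose chain of ancestors $\pi^{l-k}(\gamma) = \alpha$; this forces strict nesting by construction, and since the pre-cubes already give a partition at each level, the resulting $\Q[\alpha]^k$ differ from the $\tilde\Q[\alpha]^k$ only on a null set. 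Properties (i), (iv), (v) then follow from the pre-cube estimates, with $a_0$ shrunk slightly to absorb the ancestor-reassignment error; here the choice of $\delta$ small relative to $a_0$ is crucial so that repeated ancestor-corrections cannot eat the interior ball.

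The main obstacle is property (vi), the small-boundary estimate, and this is where the real work lies. Following Christ, a point $x \in \Q[\alpha]^k$ with $d(x, \cM \setminus \Q[\alpha]^k) \leq t\delta^k$ must either (a) lie near the boundary between two Voronoi pre-cubes of the same generation, or (b) have some descendant whose ancestor chain escapes $\Q[\alpha]^k$. Case (a) is controlled by the annular estimate
\begin{equation*}
\mu\set{x \in \Q[\alpha]^k : a_0\delta^k - s \leq d(x, z_\beta^k) \leq a_0\delta^k + s \text{ for some neighbour } \beta} \lesssim C_d \cdot (s/\delta^k)\cdot\mu(\Q[\alpha]^k),
\end{equation*}
which uses only uniform doubling on scales $\leq 2 a_0 \delta^k \leq 2$. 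Case (b) is handled by iterating the argument finitely many times at scales $\delta^{k+j}$ for $j \leq \log(1/t)/\log(1/\delta)$; at each step the bad set is a definite fraction $(1 - c_0)$ of the previous one, producing the geometric decay $t^\eta$ with $\eta = \log(1/(1-c_0))/\log(1/\delta)$. The exponent $\eta$ and constant $C_2$ depend only on $C_d$ and $\delta$, both of which are controlled via \eref{Def:Pre:Eloc} on the truncated range of scales. This iteration is the technical heart; everything else is bookkeeping.
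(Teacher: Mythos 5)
The paper does not actually prove Theorem~\ref{Thm:Dya:Christ}: it is quoted verbatim from Proposition 4.2 of \cite{Morris3}, which in turn adapts Christ's original construction from \cite{Christ} to the exponentially locally doubling setting, truncating the scales. So there is no in-paper proof to compare against; you are reconstructing a cited literature result, and your overall plan (nested maximal nets, Voronoi pre-cubes sandwiched between balls, ancestor reassignment to force nesting, then an iterative small-boundary estimate exploiting uniform doubling at scales $\le 1$) is indeed Christ's strategy.

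That said, there is a genuine gap in your construction of the cubes. You define
$\Q[\alpha]^k$ as the union, over all $l\ge k$, of the level-$l$ pre-cubes $\tilde\Q[\gamma]^l$ whose ancestor chain lands at $\alpha$, and you assert that this differs from $\tilde\Q[\alpha]^k$ only by a null set. This is false in general. For each $l$ the sets $\bigcup_{\pi^{l-k}(\gamma)=\alpha}\tilde\Q[\gamma]^l$ form a \emph{different} almost-everywhere partition of $\cM$, one whose boundaries are assembled from level-$l$ Voronoi faces, and these partitions genuinely disagree with the level-$k$ Voronoi partition on a set of positive measure: a point $x$ that lies deep inside $\tilde\Q[\alpha]^k$ but whose nearest level-$(k{+}1)$ centre happens to have ancestor $\beta\neq\alpha$ ends up in both $\Q[\alpha]^k$ (from the $l=k$ term) and $\Q[\beta]^k$ (from the $l=k{+}1$ term). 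Thus the $\Q[\alpha]^k$ as written overlap, so (i)--(iii) do not follow ``by construction'' as you claim. The fix is not cosmetic: one must either build the cubes inductively from coarse to fine (so each $\Q[\gamma]^{k+1}$ is forced inside $\Q[\pi(\gamma)]^k$), or follow Christ/Hytönen--Kairema in defining the cubes directly from the tree on centres and then proving disjointness; the small-boundary estimate you quote is precisely the tool that controls the mass of the reassignment region, so the logical order of (ii),(iii) and (vi) is more delicate than your sketch suggests. Your case (a)/(b) dichotomy for (vi) is also underspecified: the boundary of the final cube $\Q[\alpha]^k$ is \emph{not} a level-$k$ Voronoi face, so the annular estimate alone controls nothing, and the iterative geometric-decay step must do all the work simultaneously for the whole fractal boundary.

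A practical remark: since the statement is quoted in the paper and the adaptation from the doubling to the exponentially locally doubling case is carried out in detail in \cite{Morris3} and \cite{Morris}, the intended reading here is to cite rather than reprove; if you do want a self-contained argument, the cleanest reference for a careful execution of exactly the construction you sketch (including the disjointness issue) is Hytönen--Kairema, and the truncation to $k\in\In_+$ is then a straightforward restriction.
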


\begin{remark}
This theorem was first proved by Christ  in \cite{Christ} for $k\in\In$,
for doubling measure metric spaces.
\end{remark}

Throughout this paper we fix $\jscale  \in \Na$ such that
$C_1 \delta^\jscale \leq \frac{\brad}{5}$.  For $j \geq \jscale$, we write $\DyQ^j$
to denote the collection of all cubes $\Q[\alpha]^j$,
and set $\DyQ = \union_{j \geq \jscale} \DyQ^j$. 

We call $\scale = \delta^{\jscale}$ the \emph{scale}. Furthermore, 
when $t\leq \scale$, set  $\DyQ_t = \DyQ^j$ whenever
$\delta^{j+1} < t \leq \delta^{j}$.

The existence of a
truncated dyadic structure
allows us to formulate the following
 system of coordinates on $\cV$. 

\begin{definition}[GBG coordinates of $\cV$]
\index{Generalised Bounded Geometry!Coordinates}
\index{Generalised Bounded Geometry!Dyadic coordinates}
\index{Scale}
Let $x_{\Q}$ denote the \emph{centre}
of each $\Q \in \DyQ^\jscale$.
Then, we call the system of coordinates
$$\sC = \set{\psi:B(x_{\Q},\brad) \times \C^N \to \uppi_{\cV}^{-1}(B(x_{\Q},\brad))\ \text{s.t.}\ \Q \in \DyQ^\jscale}$$ 
the \emph{GBG coordinates}.
We call the set 
$$\sC_{\jscale} = \set{\psi_{\Q} = \psi\rest{\Q}: \Q \times \C^N \to \uppi_{\cV}^{-1}(\Q)\ \text{s.t.}\ \Q \in \DyQ^\jscale}$$
the \emph{dyadic GBG coordinates}.
For any cube $Q \in \DyQ^j$, 
there is a unique
cube $\parent{\Q} \in \DyQ^{\jscale}$
satisfying $\Q \subset \parent{\Q}$
and we call this cube the \emph{GBG cube
of $\Q$}.
The \emph{GBG coordinate system}
of $\Q$ is then $\psi:B(x_{\parent{\Q}},\brad) \times \C^N \to \uppi_{\cV}^{-1}(B(x_{\parent{\Q}},\brad)$.
\end{definition}

\begin{remark} 
Since $\union \DyQ^J$ 
is an almost-everywhere covering
of $\cM$,
the GBG coordinate system of $\cV$
defines an almost-everywhere smooth
global trivialisation of the vector bundle.
\end{remark} 

We emphasise that throughout
this paper,
for any cube $\Q \in \DyQ^j$, 
we denote its GBG 
cube by $\parent{\Q}$.

A first and important consequence of
the GBG coordinates is that
it allows us to define
the notion of a \emph{constant} section
(locally, in the eyes of our GBG coordinates and dyadic decomposition).
More precisely, for $x \in \Q$, 
given $w = w_i\ e^i(x) \in \cV_x$,
where the $\set{e^i}$ is the GBG coordinates 
of $\Q$,
we define $\omega(y) = w_i\ e^i(y)$ whenever 
$y \in B(x_{\parent{Q}},\rho)$ and $\omega(y) = 0$
otherwise.
This is an extension of $w \in \cV_x$ to
the entire GBG coordinate ball $B(x_{\parent{\Q}},\rho)$.
Thus, we call $\omega$ the GBG constant
section associated to $w$. 
Note that $\omega\in \Lp{\infty}(\cV)$.  We remark that
this notion is crucial in later parts of the paper.
Next, we can define a notion of \emph{cube integration}
in the following way.
Let $u \in \Lp[loc]{1}(\cV)$. 
Then, for any $\Q \in \DyQ$,
we can write
$$ \int_{\Q} u\ d\mu  = \cbrac{\int_{\Q} u_i\ d\mu} e^i$$
where $u = u_i e^i$ in the GBG coordinates of $\Q$. 
Note that $ \int_{\Q} u\ d\mu$ is a function from $Q$ to $\cV$.

Pursuing a similar vein of thought, we define 
a \emph{cube average}. 
Given a cube $\Q \in \DyQ$ 
and $u \in \Lp[loc]{1}(\cV)$,
define $u_{\Q}\in \Lp{\infty}(\cV)$ by
$$ 
u_{\Q}(y) =  \begin{cases}
	\tfrac{1}{\mu(\Q)} \int_{\Q}u\ d\mu &\text{when } y\in B(x_{\parent{\Q}},\brad),\\
	0 &\text{otherwise}.
	\end{cases}$$ 

\begin{remark}
We remark that
in the average of a function
over a general measurable set
of positive measure,
we do not perform a cutoff
as we do here. However,
whenever we write $u_{\Q}$
with $\Q$ being a cube (even for a function),
we shall always assume this definition.
\end{remark}

Lastly, we define the \emph{dyadic averaging operator}.
For each $t > 0$, define 
the operator $\Av_t: \Lp[loc]{1}(\cV) \to \Lp[loc]{1}(\cV)$
by 
$$
\Av_t u (x) = \frac{1}{\mu(\Q)} \int_{\Q} u(y)\ d\mu(y)$$
whenever $x \in \Q \in \DyQ_t$. 
We remark that $\Av_t: \Lp{2}(\cV) \to \Lp{2}(\cV)$ is 
a bounded operator with norm bounded uniformly for all $t \leq \scale$ by a  
bound depending on the constant $C$ arising in the GBG criterion. 

\subsection{Functional calculi of sectorial operators}

Of fundamental importance to the 
setup and proof that we present in this 
paper, 
is the functional calculus of certain operators.
In this section, we introduce the key type of
operators that we shall concern ourselves with,
and, for the convenience of the reader, 
recall some facts about
functional calculi of these operators.  
A fuller treatment of this material 
can be found in \cite{AMX}. A local version of this theory 
can be found in \cite{Morris2}.

Let $\Banach_1, \Banach_2$ be  Banach spaces.
We say that a linear map $T:\dom(T) \subset \Banach_1 \to \Banach_2$
is an \emph{operator} with domain $\dom(T)$.
The range of $T$ is denoted by $\ran(T)$
and the null space by $\nul(T)$.
We say that such an operator
is \emph{closed}
if the graph $\graph(T) = \set{(u, Tu): u \in \dom(T)}$
is closed in the product topology of 
$\Banach_1 \times \Banach_2$.
The operator $T$ is \emph{bounded} if
$\norm{Tu}_{\Banach_2} \leq \norm{u}_{\Banach_1}$
for all $u \in \dom(T) = \Banach_1$.
If $\Banach_1=\Banach_2$ then the \emph{resolvent} set $\rset(T)$ 
consists of all $\zeta \in \C$ such that
$\zeta I - T$ is one-one, onto and has a bounded inverse.
The spectrum is then $\spec(T) = \C \setminus \rset(T)$.

For $0 \leq \omega < \frac{\pi}{2}$, define the
bisector
$S_{\omega} = \set{\zeta \in \C: \modulus{\arg \zeta} \leq \omega\ \text{or}\ \modulus{\arg (-\zeta)} \leq \omega\ \text{or}\ \zeta = 0}$.
The open bisector is then defined as
$S_{\omega}^o = \set{\zeta \in \C: \modulus{\arg \zeta} < \omega\  \text{or}\ \modulus{\arg (-\zeta)} < \omega,\ \zeta \neq 0}$.
An operator $T$ in a Banach space $\Banach$  is  \emph{$\omega$-bisectorial}
if it is closed, $\spec(T) \subset S_{\omega}$,
and  the following \emph{resolvent bounds} hold: 
for each $\omega < \mu < \frac{\pi}{2}$, there exists
$C_{\mu}$ such that for all $\zeta \in \C\setminus S_\mu$, 
we have $\modulus{\zeta} \norm{(\zeta I - T)^{-1}} \leq C_{\mu}$.
Depending on context, we refer to such operators
simply as bisectorial and $\omega$ is then the
\emph{angle of bisectoriality}.

Now fix $0 \leq \omega < \mu < \frac{\pi}{2}$ 
and assume that $T$ is an $\omega$-bisectorial 
operator  in a Hilbert space $\Hil$. 
We let $\Psi(S_{\mu}^o)$ denote the space of all
holomorphic functions $\psi: S_{\mu}^o \to \C$ 
for which
$$\modulus{\psi(\zeta)} \lesssim \frac{\modulus{\zeta}^{\alpha}}{1 + \modulus{\zeta}^{2\alpha}}$$
for some $\alpha > 0$.
For such functions, we define a functional calculus 
similar to the Riesz-Dunford functional calculus
by
$$ 
\psi(T) = \frac{1}{2\pi\imath} \oint_{\gamma} \psi(\zeta)(\zeta I - T)^{-1}\ d\zeta$$
where $\gamma$ is a contour in $S_{\mu}^o$ enveloping $S_{\omega}$ 
parametrised anti-clockwise, and
the integral is defined via Riemann sums.
This integral converges absolutely
as a consequence of the decay of $\psi$, 
coupled with the resolvent bounds of $T$.
The operator $\psi(T)$ is bounded.
We say that $T$ has a \emph{bounded holomorphic
 functional calculus}
if there exists $C > 0$
such that $\norm{\psi(T)} \leq C \norm{\psi}_\infty$
for all $\psi \in \Psi(S_{\mu}^o)$. 

Now suppose that $\Hil$ is a Hilbert space, and  that $T:\dom(T)\subset \Hil \to \Hil$ is  $\omega$-bisectorial. In this case $\Hil = \nul(T)\oplus \overline{\ran(T)}$ where the direct sum is typically non-orthogonal.
Let  $\proj_{\nul(T)}$ denote the projection of $\Hil$
onto $\nul(T)$ which is $0$ on $\ran(T)$.
Let $\Hol^\infty(S_{\mu}^o)$ denote 
the space of all bounded functions $f: S_{\mu}^o \union \set{0} \to \C$
which are holomorphic on $S_{\mu}^o$.
For such a function $f$, there exists a uniformly bounded
sequence $(\psi_n)$  of functions in $\Psi(S_{\mu}^o)$ which
converges to $f\rest{S_{\mu}^o}$
in the compact-open topology over $S_{\mu}^o$.
If  $T$ has a bounded holomorphic
functional calculus, and $u\in\Hil$, then
the limit $\lim_{n\to\infty} \psi_n(T)u$ 
exists in $\Hil$, and we define
$$f(T)u = f(0)\proj_{\nul(T)}u + \lim_{n\to\infty} \psi_n(T)u\ .$$
This defines a bounded operator 
independent of
the sequence $(\psi_n)$, and 
we have the bound $\norm{f(T)} \leq C \norm{f}_\infty$ for some finite $C$. 

In particular the functions $\chi^{+}, \chi^{-}$ and $\sgn=\chi^{+}-\chi^{-}$ 
belong to $\Hol^\infty(S_{\mu}^o)$, 
where $\chi^{+}(\zeta) = 1$ when $\re(\zeta) > 0$
and $0$ otherwise, and 
$\chi^{-}(\zeta) = 1$ when $\re(\zeta) < 0$
and $0$ otherwise.
Therefore, if $T$ has a bounded 
functional calculus in $\Hil$, 
then $\chi^{\pm}(T)$ are bounded projections, 
and $\Hil = \nul(T) \oplus \ran(\chi^{+}(T))\oplus \ran(\chi^{-}(T))$. 
The bounded operator $\sgn(T)$ 
equals 0 on $\nul(T)$, equals 1 on 
$\ran(\chi^{+}(T))$, and equals $-1$ on $\ran(\chi^{-}(T))$. 
\section{Hypotheses}
\label{Sect:Ass}

Though our primary goal in this paper
is to provide,
under an appropriate set of
assumptions, a 
positive answer to the Kato square root problem
on vector bundles, we shall pursue a
slightly more general setup in the footsteps
of \cite{AKMC}.
The purpose of this section 
is to describe this setup as a
list of hypotheses, and in later sections,
demonstrate how to apply tools
from harmonic analysis to 
prove quadratic estimates under  
these hypotheses.
The Kato square root estimate
on vector bundles (and $(p,q)$ tensors) will
then be obtained by showing that 
these hypotheses are satisfied
under our geometric assumptions. 

Let $\Hil$ be a Hilbert space
and let $\inprod{\mdot,\mdot}$ 
denote its inner product.
Following \cite{AKMC}
and \cite{Morris3}, we make
the following operator theoretic assumptions.

\begin{enumerate}
\item[(H1)]
	The operator $\Gamma: \dom(\Gamma) \subset \Hil \to \Hil$
	is closed, densely defined and \emph{nilpotent}.

\item[(H2)]
	The operators $B_1, B_2 \in \bddlf(\Hil)$
	satisfy
	\begin{align*}
	&\re\inprod{B_1u,u} \geq \kappa_1 \norm{u}^2 &&\text{whenever }u \in \ran(\adj{\Gamma}), \\
	&\re\inprod{B_2u,u} \geq \kappa_2 \norm{u}^2 &&\text{whenever }u \in \ran(\Gamma),
	\end{align*}
	where $\kappa_1, \kappa_2 > 0$ are constants.

\item[(H3)]
	The operators $B_1, B_2$ satisfy
	$B_1B_2 (\ran(\Gamma)) \subset \nul(\Gamma)$
	and $B_2B_1(\ran(\adj{\Gamma})) \subset \nul(\adj{\Gamma})$.
\end{enumerate}

Furthermore, define
$\adj{\Gamma}_B = B_1 \adj{\Gamma} B_2$,
$\Pi_B = \Gamma + \adj{\Gamma}_B$
and $\Pi = \Gamma + \adj{\Gamma}$. 
The operator $\Pi$ is self-adjoint, and $\Pi_B$ is bisectorial,
and thus we define the following
associated bounded operators:
$$
R_t^B = (1 + it \Pi_B)^{-1},\ 
P_t^B = (1 + t^2 \Pi_B^2)^{-1},\ 
Q_t^B = t\Pi_B (1 + t^2 \Pi_B^2)^{-1},\ 
\Theta_t^B = t \adj{\Gamma}_B (1 + t^2 \Pi_B^2)^{-1}.$$
We write $R_t, P_t, Q_t,
\Theta_t$ on setting $B_1 = B_2 = 1$.
The full implications of these
assumptions are
listed in \S4 of \cite{AKMC}.

The following additional assumptions are
mild, particularly since
we wish to apply the theory to differential 
operators. They are essentially the same
as the assumptions made in 
\cite{AKMC} and \cite{Morris3}, but modified for vector bundles.
 We remark that in  (H5) below,
 $\bddlf(\cV)$ denotes
 the vector bundle 
of all bounded linear maps $T_x:\cV_x \to \cV_x$
for each $x \in \cM$
(where $\cV_x$ is the fibre over $x$).
The boundedness 
is with respect to the metric
$\mh_x$ on the fibre $\cV_x$. Note that  
the local trivialisations
for this bundle are canonically induced
by the local trivialisations of $\cV$, and in each local trivialisation
the $T_x$ can be represented
as usual by an $N \times N$
matrix.

\begin{enumerate}
\item[(H4)]	
	The Hilbert space $\Hil = \Lp{2}(\cV)$,
	 where $\cV$ is a smooth vector bundle with  smooth  metric
	$\mh$ over a smooth,  complete  Riemannian manifold $\cM$
	with  smooth metric $\mg$. 
	Furthermore, $\cV$ satisfies
	the GBG criterion and $\cM$ satisfies
	(\ref{Def:Pre:Eloc}). 

\item[(H5)]
	The operators $B_1, B_2$ are multiplication operators, i.e. there exist 
	$B_i \in \Lp{\infty}(\bddlf(\cV))$.
\item[(H6)]
	The operator $\Gamma$ is a first order differential 
	operator. That is, there exists  $C_{\Gamma} > 0$
	such that whenever $\eta \in \Ck[c]{\infty}(\cM)$,
	we have that 
	$\eta \dom(\Gamma) \subset \dom(\Gamma)$
	and 
	$\Mul_{\eta}u(x) = \comm{\Gamma,\eta(x)I}u(x)$
	is a multiplication operator satisfying
	$$ \modulus{\Mul_{\eta}u(x)} \leq C_{\Gamma} \modulus{\conn \eta}_{\cotanb M} \modulus{u(x)}$$
	for all $u \in \dom(\Gamma)$ and almost all $x \in \cM$.
\end{enumerate}

\begin{remark}
We note as in \cite{Morris3}
that (H6) implies the same hypothesis with
$\Gamma$ replaced by either $\adj{\Gamma}$ or
$\Pi$.
\end{remark}

It is in the following two hypotheses that we 
make a more substantial departure from
\cite{AKMC} and \cite{Morris3}. A significant
difference is that we have used the dyadic
structure, rather than balls,
in their formulation. This cannot
be avoided since we are forced to employ quantities
which are defined through GBG coordinates.

Recalling the definition
of a cube integral
in \S\ref{Sect:Prelim:GBG}, we formulate the following 
\emph{cancellation} hypothesis.  

\begin{enumerate}
\item[(H7)] There exists $c > 0$
	such that for all  $\Q \in \DyQ$,
	$$ \modulus{\int_{\Q} \Gamma u\ d\mu}
		\leq c \mu(\Q)^{\frac{1}{2}} \norm{u}
	\quad\text{and}\quad
	\modulus{\int_{\Q} \adj{\Gamma} v\ d\mu}
		\leq c \mu(\Q)^{\frac{1}{2}} \norm{v}$$	
	for all $u \in \dom(\Gamma),\ v\in \dom(\adj{\Gamma})$
	satisfying $\spt u,\ \spt v \subset \Q$.
\end{enumerate} 

Lastly, we make the following
abstract \Poincare and coercivity hypotheses 
on the bundle  (recalling that $\DyQ_t = \DyQ^j$ whenever
$\delta^{j+1} < t \leq \delta^{j}$).

\begin{enumerate}
\item[(H8)]
	There exist $C_P,C_C, c,  \tilde{c} > 0$
	and an operator 
	$\Xi: \dom(\Xi) \subset \Lp{2}(\cV) \to \Lp{2}(\sN)$,
	where $\sN$ is a normed bundle over $\cM$
	with norm $\modulus{\mdot}_{\sN}$
	and $\dom(\Pi) \intersect \ran(\Pi) \subset \dom(\Xi)$,
	satisfying: 
	\begin{enumerate}[-1]
	\item (Dyadic \Poincare)
		$$ \int_{B} \modulus{u - u_{\Q}}^2\ d\mu
			\leq C_P(1 + r^\kappa \e^{\lambda c rt})
			(rt)^2 
			\int_{\tilde{c}B} \cbrac{\modulus{\Xi u}_{\sN}^2 + \modulus{u}^2}\ d\mu$$
	for all balls $B = B(x_{\Q},rt)$ with $r \geq C_1/\delta$
	where $\Q \in \DyQ_t$ with $t \leq \scale$, and
	\item (Coercivity)
		$$ \norm{\Xi u}_{\Lp{2}(\sN)}^2 + \norm{u}_{\Lp{2}(\cV)}^2 
			\leq C_C \norm{\Pi u}_{\Lp{2}(\cV)}^2.$$
	\end{enumerate}
 for all $u \in \dom(\Pi)\intersect \ran(\Pi)$. 
\end{enumerate}

We justify calling this an abstract \Poincare
inequality for two reasons. First,
the inequality is for sections on a vector bundle,
not just for scalar-valued functions.
Second, in the usual \Poincare
inequality, the operator $\Xi$ is
simply $\conn$.
We allow ourselves other possibilities in choosing the  operator $\Xi$
here, because it can be useful
in the situation of a vector bundle
that is in general non-flat and non-trivial.
\section{Main Results}
\label{Sect:Res}

\subsection{Bounded holomorphic functional calculi and Kato square root type estimates}
 
In this section we 
to first illustrate how to reduce
the main quadratic estimate 
to a simpler, local quadratic estimate.
Then, we present the main theorem of this paper
and illustrate its main corollary; a Kato square
root type estimate.

We begin with the following 
adaptation of Proposition 5.2 in \cite{Morris3}.

\begin{proposition}
\label{Prop:Ass:Main}
Suppose that $(\Gamma, B_1, B_2)$ satisfies 
the hypotheses (H1)-(H3) along with
$\norm{u} \lesssim \norm{\Pi u}$
for $u \in \ran(\Pi)$, 
and that
there exists $c > 0$ and $t_0 > 0$ such that
\begin{equation*}\label{quad} \int_{0}^{t_0} \norm{\Theta_t^B P_t u}^2\ \frac{dt}{t}
	\leq c \norm{u}^2 \tag{Q1}
\end{equation*}
for all $u \in \ran(\Gamma)$,
together with three similar estimates
obtained by replacing $(\Gamma, B_1, B_2)$
by $(\adj{\Gamma},B_2,B_1)$,
$(\adj{\Gamma},\adj{B_2}, \adj{B_1})$
and $(\Gamma, \adj{B_1}, \adj{B_2})$.
Then, $\Pi_B$ satisfies
$$
\int_{0}^\infty \norm{Q_t^B u}^2\ \frac{dt}{t}
	\simeq \norm{u}^2$$ 
for all $u \in \close{\ran(\Pi_B)} \subset \Hil$.
Thus, $\Pi_B$ has a bounded holomorphic 
functional calculus.
\end{proposition}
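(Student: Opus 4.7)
The plan is to follow the scheme of Proposition~5.2 of \cite{Morris3}, adapted to the dyadic GBG setting. By the abstract framework of Section~4 in \cite{AKMC}, hypotheses (H1)--(H3) imply that $\Pi_B$ is bisectorial with the Hodge-type topological splitting
\[
\Hil = \nul(\Pi_B) \dsum \close{\ran(\Gamma)} \dsum \close{\ran(\adj{\Gamma}_B)},
\]
and that a quadratic estimate on $\close{\ran(\Pi_B)}$ is equivalent to $\Pi_B$ having a bounded holomorphic functional calculus. Since $Q_t^B$ vanishes on $\nul(\Pi_B)$, the task reduces to proving $\int_0^\infty \norm{Q_t^B u}^2 \tfrac{dt}{t} \simeq \norm{u}^2$ for $u \in \close{\ran(\Pi_B)}$. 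I split the integral at $t_0$ and treat the two summands of $\close{\ran(\Pi_B)}$ separately.

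For the small-time contribution on $\close{\ran(\Gamma)}$, nilpotency $\Gamma^2 = 0$ together with (H3) forces $P_t^B$ to preserve this subspace, so $\Gamma P_t^B u = 0$ and thus $Q_t^B u = \Theta_t^B u$. I then decompose
\[
\Theta_t^B u = \Theta_t^B P_t u + \Theta_t^B (I - P_t) u,
\]
where the first term is controlled directly by (Q1). For the error, the self-adjointness of $\Pi$ yields the automatic spectral quadratic estimate $\int_0^\infty \norm{Q_t v}^2 \tfrac{dt}{t} \simeq \norm{v}^2$ on $\close{\ran(\Pi)}$; combined with the uniform bound $\norm{\Theta_t^B} \lesssim 1$ and the identity $I - P_t = t\Pi Q_t$, a standard Littlewood--Paley argument gives the required $L^2(\tfrac{dt}{t})$ control. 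The analogous treatment on $\close{\ran(\adj{\Gamma}_B)}$ using the $(\adj\Gamma, B_2, B_1)$-version of (Q1) produces $\int_0^{t_0} \norm{Q_t^B u}^2 \tfrac{dt}{t} \lesssim \norm{u}^2$ on all of $\close{\ran(\Pi_B)}$.

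For the large-time contribution, the coercivity $\norm{u} \lesssim \norm{\Pi u}$ on $\ran(\Pi)$ transfers to $\norm{u} \lesssim \norm{\Pi_B u}$ on $\ran(\Pi_B)$, exploiting the accretivity (H2) of $B_1, B_2$ on the two range subspaces together with the diagonal representation of $\Pi_B^2 = \Gamma \adj{\Gamma}_B + \adj{\Gamma}_B \Gamma$ under the Hodge splitting. Hence $\Pi_B$ is boundedly invertible on $\close{\ran(\Pi_B)}$. Writing $Q_t^B u = \tfrac{1}{t}(I - P_t^B) \Pi_B^{-1} u$ for $u \in \close{\ran(\Pi_B)}$ and using the uniform boundedness of $I - P_t^B$ gives $\norm{Q_t^B u} \lesssim \tfrac{1}{t}\norm{u}$, so that $\int_{t_0}^\infty \norm{Q_t^B u}^2 \tfrac{dt}{t} \lesssim \norm{u}^2$. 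The reverse inequality is then obtained by applying this upper bound to the dual triple $(\Gamma, \adj{B_1}, \adj{B_2})$ via the Calder\'on reproducing formula, and the bounded holomorphic functional calculus of $\Pi_B$ follows from the McIntosh convergence lemma recalled in the preliminaries.

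The main technical obstacle is the treatment of $\Theta_t^B (I - P_t) u$ in the small-time step: a crude bound using only $\norm{\Theta_t^B} \lesssim 1$ and $\norm{I - P_t} \lesssim 1$ produces an integral that diverges logarithmically in the spectrum of $\Pi$. The argument must therefore pair $\Theta_t^B$ and $I - P_t$ genuinely, exploiting the commutation of $P_t$ with the spectral decomposition of $\Pi$ and the algebraic compatibility between the Hodge splittings for $\Pi$ and $\Pi_B$ imposed by (H3) in order to convert the error into a $\norm{Q_t v}$-type quantity which is then absorbed by the spectral quadratic estimate for $\Pi$.
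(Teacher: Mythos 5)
Your proposal follows the same route as the paper's (terse) proof, which simply defers to Proposition~5.2 of Morris \cite{Morris3} and notes that the coercivity $\norm{u}\lesssim\norm{\Pi u}$ handles the integral on $(t_0,\infty)$. The structural ingredients you identify --- the Hodge-type splitting $\Hil = \nul(\Pi_B)\oplus\close{\ran(\Gamma)}\oplus\close{\ran(\adj\Gamma_B)}$, the observation that $Q_t^B = \Theta_t^B$ on $\close{\ran(\Gamma)}$ because $P_t^B$ preserves that subspace and $\Gamma$ kills it, the split at $t_0$, the transfer of coercivity from $\Pi$ to $\Pi_B$ via (H2) and (H3) to get $\norm{Q_t^B u}\lesssim t^{-1}\norm{u}$ for $t\geq t_0$, and the Calder\'on-reproducing duality for the reverse inequality --- are exactly the ingredients of the Morris/AKMC argument. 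You also correctly flag the one genuine subtlety, namely that $\Theta_t^B(I-P_t)u$ cannot be bounded term by term; it must be rewritten so that $t\Pi$ lands inside a uniformly bounded prefactor and the remaining $Q_t u$ factor is absorbed by the spectral quadratic estimate for the self-adjoint $\Pi$.

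Two small corrections. First, the sentence ``combined with the uniform bound $\norm{\Theta_t^B}\lesssim 1$ and the identity $I-P_t = t\Pi Q_t$, a standard Littlewood--Paley argument gives the required control'' reads, as written, as if separate uniform bounds on $\Theta_t^B$ and $I-P_t$ suffice; as you later acknowledge, they do not (the integral simply diverges at $t=0$). It would be cleaner to state up front the identity $\Theta_t^B(I-P_t)u = \bigl(t\Theta_t^B\adj\Gamma\bigr)Q_t u$ valid on $\close{\ran(\Gamma)}$, together with the uniform bound on the bracketed factor coming from (H1)--(H3), rather than to first offer an argument you then retract. Second, the adjoint of $\Pi_B=\Gamma+B_1\adj\Gamma B_2$ is $\adj{\Pi_B}=\adj\Gamma+\adj{B_2}\Gamma\adj{B_1}$, which corresponds to the triple $(\adj\Gamma,\adj{B_2},\adj{B_1})$, not $(\Gamma,\adj{B_1},\adj{B_2})$; the latter is needed to treat $\adj{\Pi_B}$ restricted to $\close{\ran(\Gamma)}$. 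Both triples are among the four hypothesised estimates, so the argument is not damaged, but the attribution in your duality step is off.
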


\begin{proof}
The proof is similar to the proof
of Proposition 5.2 in \cite{Morris3}.
The assumption that
$\norm{u} \lesssim \norm{\Pi u}$
allows us to handle the integral from $t_0$ to 
$\infty$.
\end{proof}

We use the entire list 
of hypotheses (H1)-(H8) in \S\ref{Sect:Ass}
to show that the assumptions
of Proposition \ref{Prop:Ass:Main}
are satisfied.
Thus, this yields the
main theorem of this paper.

\begin{theorem}
\label{Thm:Ass:Main}
Suppose that $\cM$, $\cV$ 
and  $(\Gamma,B_1,B_2)$
satisfy (H1)-(H8). Then,
$\Pi_B$ satisfies the quadratic
estimate 
$$
\int_{0}^\infty \norm{Q_t^B u}^2\ \frac{dt}{t}
	\simeq \norm{u}^2$$ 
for all $u \in \close{\ran(\Pi_B)} \subset \Lp{2}(\cV)$
and hence has a bounded holomorphic functional 
calculus.
\end{theorem}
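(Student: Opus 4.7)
The plan is to apply Proposition \ref{Prop:Ass:Main}, which reduces the global quadratic estimate for $\Pi_B$ to the local version (Q1) together with its three symmetric variants. The coercivity statement in (H8) immediately supplies $\norm{u}\lesssim\norm{\Pi u}$ on $\ran(\Pi)$, which via standard functional-calculus comparison between $\Pi$ and $\Pi_B$ handles the tail $\int_{t_0}^{\infty}$. Thus all the work concentrates on proving (Q1) on $\ran(\Gamma)$; the other three estimates are obtained by interchanging the roles of $(\Gamma,B_1,B_2)$ and their adjoints, so I describe only the first.

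The proof will follow the principal-part strategy of \cite{AKMC} as adapted to the inhomogeneous setting in \cite{Morris3}. First, using (H5), (H6) and the resolvent bounds for the bisectorial $\Pi_B$, I would establish off-diagonal decay estimates of arbitrary polynomial order $m$ for the families $R_t^B,P_t^B,Q_t^B,\Theta_t^B$ on Borel sets $E,F\subset \cM$, i.e.\ $\norm{\ind{E} U_t \ind{F} u}\lesssim \langle d(E,F)/t\rangle^{-m}\norm{u}$ for $t\leq \scale$. Because $\cM$ only satisfies \eqref{Def:Pre:Eloc}, these estimates have the local character used by Morris and must be truncated at scale $\scale$; a standard commutator iteration based on (H6) and the operator-matrix algebra of $\Pi_B$ delivers them in the GBG setting.

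Next I define the \emph{principal part} $\gamma_t:\cM\to \bddlf(\C^N,\cV)$. For $t\leq \scale$ and $\Q\in\DyQ_t$, and for $w\in\C^N$, let $\omega$ be the GBG constant section associated to $w$ on the GBG cube $\parent{\Q}$, and set $\gamma_t(x)w=(\Theta_t^B \omega)(x)$ for $x\in\Q$. The off-diagonal bounds guarantee that $\gamma_t$ is uniformly bounded and that $\norm{\gamma_t \Av_t u}\lesssim \norm{u}$. I then split
\[
\Theta_t^B P_t u = \bigl(\Theta_t^B - \gamma_t \Av_t\bigr) P_t u \;+\; \gamma_t \Av_t(P_t - I)u \;+\; \gamma_t \Av_t u .
\]
For the first summand, the dyadic \Poincar\'e inequality (H8) applied cube-by-cube against off-diagonal decay yields $\int_0^{\scale}\norm{(\Theta_t^B-\gamma_t\Av_t)P_t u}^2\tfrac{dt}{t}\lesssim \norm{u}^2$ (this is where the coercive operator $\Xi$ and the cutoff \ind{B} structure play their role). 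For the middle summand, a direct Hilbert-space calculation on $\ran(\Gamma)$ using cancellation (H7) and the uniform $\Lp{2}$-boundedness of $\gamma_t \Av_t$ gives the required $\int_0^{\scale}\|\gamma_t\Av_t(P_t-I)u\|^2 \tfrac{dt}{t}\lesssim \|u\|^2$.

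It then remains to prove the Carleson measure estimate $\int\!\!\int_{\widehat{\Q}}|\gamma_t(x)|^2\,\tfrac{d\mu(x)\,dt}{t}\lesssim \mu(\Q)$ uniformly in $\Q\in\DyQ$ with $\ell(\Q)\leq \scale$, from which (Q1) follows via Carleson's inequality applied to $\Av_t u$. I expect this to be the main obstacle. It is established by a stopping-time / $Tb$-type argument: for each $\Q\in\DyQ$ and each unit $w\in\C^N$, build a test section $f_\Q^w$ of the form $\eta_\Q \,(I+\varepsilon(\ell(\Q))i\Pi_B)^{-1}\omega$ (with $\omega$ the GBG constant associated to $w$) whose cube average stays close to $\omega$ on a large subset of $\Q$, and whose image under $\Theta_t^B$ can be compared to $\gamma_t \Av_t f_\Q^w$. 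Running the stopping-time construction within the GBG trivialisation of $\parent{\Q}$ and invoking (H7) on each subcube to control the boundary terms that in the Euclidean case come from integration by parts, one obtains a dyadic Carleson packing estimate. The delicate point, which has no counterpart in the trivial-bundle situation of \cite{Morris3}, is that GBG constant sections are only locally defined and jump across boundaries of GBG cubes; hence care is needed both in defining $\gamma_t$ consistently across different $\parent{\Q}$ and in ensuring the stopping-time family refines so that each selected subcube lies inside a single GBG cube, so that the test functions are unambiguous. Once this Carleson bound is in hand, Proposition \ref{Prop:Ass:Main} closes the argument.
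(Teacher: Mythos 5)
Your proposal follows the paper's argument in all essentials: reduce via Proposition~\ref{Prop:Ass:Main} to the local estimate \eqref{quad}, establish off-diagonal bounds for $R_t^B,P_t^B,Q_t^B,\Theta_t^B$, split $\Theta_t^B P_t u$ using the principal part $\pri_t$ and the dyadic averaging operator $\Av_t$ exactly as in \eqref{quad2}, estimate the two principal terms via (H8) and (H7)/Lemma~\ref{Lem:Harm:Upsilon} respectively, and close with a stopping-time Carleson measure estimate built on test sections in the GBG trivialisation. This is the paper's proof.

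One small technical point worth tidying: you write the test section as $\eta_\Q(I+i\epsilon\ell(\Q)\Pi_B)^{-1}\omega$, with the cutoff applied \emph{after} the resolvent. Since the GBG constant section $\omega$ is only in $\Lp{\infty}(\cV)$ and need not lie in $\Lp{2}(\cV)$ on a non-compact manifold, this is not well-posed as written; the paper cuts off first, setting $\tilde{w}=\eta w$ (compactly supported inside the GBG chart), and uses the form $\tf_{\Q,\epsilon}^w=\tilde{w}-i\epsilon\ell(\Q)\,\Gamma R^B_{\epsilon\ell(\Q)}\tilde{w}$, which also exploits nilpotency $\Gamma^2=0$ to control $\Gamma\tf_{\Q,\epsilon}^w=\Gamma\tilde{w}$ directly via (H6). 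Otherwise your outline matches, including the observation that the remaining three local estimates follow from the symmetry of (H1)--(H8) under passing to $(\adj\Gamma,B_2,B_1)$ and the adjoint triples.
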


We defer the proof to  \S\ref{Sec:Harm}.

In particular, the projections $\chi^{\pm}(\Pi_B)$ are bounded, 
as is the operator $\sgn(\Pi_B)= \chi^{+}(\Pi_B) - \chi^{-}(\Pi_B)$. 
Thus we have the following corollary.

\begin{corollary}[Kato square root type estimate] 
Under the hypotheses of Theorem \ref{Thm:Ass:Main},
\label{Cor:Ass:Main}
\begin{enumerate}[(i)]
\item there is a spectral decomposition  
	$$\Lp{2}(\cV) = \nul(\Pi_B) \oplus E_B^{+} \oplus E_B^{-}$$
	where $E_B^{\pm} = \ran(\chi^{\pm}(\Pi_B))$
	(the direct sum is in general non-orthogonal), and 
\item 	$\dom(\Gamma) \intersect \dom(\adj{\Gamma}_B) = \dom(\Pi_B) = \dom(\sqrt{{\Pi_B}^2})$
	with
	$$\norm{\Gamma u} + \norm{\adj{\Gamma}_B u} 
		\simeq \norm{\Pi_B u} 	
		\simeq \|\sqrt{{\Pi_B}^2}u\|$$
	for all $u \in \dom(\Pi_B)$.
\end{enumerate} 
\end{corollary}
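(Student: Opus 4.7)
The plan is to derive both parts as standard consequences of \thmref{Thm:Ass:Main}, which asserts that $\Pi_B$ is bisectorial with a bounded holomorphic functional calculus on $\Lp{2}(\cV)$. The arguments are the operator-theoretic ones of \S4 of \cite{AKMC}; the only new input is the quadratic estimate that powers the calculus, which \thmref{Thm:Ass:Main} now supplies in our geometric setting.

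For part (i), I would apply the calculus to the bounded functions $\chi^{\pm} \in \Hol^\infty(S_{\mu}^o)$ to produce bounded projections $\chi^{\pm}(\Pi_B)$ with ranges $E_B^{\pm}$. Bisectoriality gives the topological splitting $\Lp{2}(\cV) = \nul(\Pi_B) \oplus \close{\ran(\Pi_B)}$, and the identity $\chi^{+} + \chi^{-} = 1$ on $S_\mu^o$ combined with the rule $f(0)\proj_{\nul(\Pi_B)}$ described in \S2.4 yields $\chi^{+}(\Pi_B) + \chi^{-}(\Pi_B) + \proj_{\nul(\Pi_B)} = I$, producing the claimed (in general non-orthogonal) decomposition.

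For part (ii), the equivalence $\dom(\Pi_B) = \dom(\sqrt{\Pi_B^2})$ with $\|\Pi_B u\| \simeq \|\sqrt{\Pi_B^2}u\|$ would follow from the functional-calculus identity $\Pi_B = \sgn(\Pi_B)\sqrt{\Pi_B^2}$ together with $\sgn(\Pi_B)^2 = \chi^{+}(\Pi_B) + \chi^{-}(\Pi_B)$, which acts as the identity on $\close{\ran(\Pi_B)}$; boundedness of $\sgn(\Pi_B)$ comes from $\sgn \in \Hol^\infty(S_\mu^o)$ and supplies both directions of the equivalence, while $\Pi_B$ and $\sqrt{\Pi_B^2}$ annihilate $\nul(\Pi_B)$.

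The remaining equivalence $\dom(\Gamma) \cap \dom(\adj{\Gamma}_B) = \dom(\Pi_B)$ with $\|\Pi_B u\| \simeq \|\Gamma u\| + \|\adj{\Gamma}_B u\|$ has its $\supset$ inclusion and upper bound immediately from $\Pi_B = \Gamma + \adj{\Gamma}_B$. For the converse I would invoke the Hodge-type decomposition
\begin{equation*}
\Lp{2}(\cV) = \nul(\Pi_B) \oplus \close{\ran(\Gamma)} \oplus \close{\ran(\adj{\Gamma}_B)},
\end{equation*}
with $\nul(\Pi_B) = \nul(\Gamma) \cap \nul(\adj{\Gamma}_B)$, an operator-theoretic consequence of (H1)--(H3) and bisectoriality. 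Nilpotency in (H1) gives $\Gamma^2 = 0$, while (H3) together with nilpotency of $\adj{\Gamma}$ forces $(\adj{\Gamma}_B)^2 = 0$; consequently, on $\close{\ran(\Gamma)}$ one has $\Pi_B u = \adj{\Gamma}_B u$ and on $\close{\ran(\adj{\Gamma}_B)}$ one has $\Pi_B u = \Gamma u$. Decomposing $u \in \dom(\Pi_B)$ along the Hodge splitting and using boundedness of the associated non-orthogonal projections yields the reverse estimate $\|\Gamma u\| + \|\adj{\Gamma}_B u\| \lesssim \|\Pi_B u\|$. The main obstacle is the Hodge-type decomposition itself, which requires the verification under (H3) that $\close{\ran(\Gamma)}$ and $\close{\ran(\adj{\Gamma}_B)}$ are $\Pi_B$-reducing closed subspaces on which $\Pi_B$ acts as $\adj{\Gamma}_B$ and $\Gamma$ respectively; once this structure is in place, everything else is a transcription of the standard bounded functional calculus arguments.
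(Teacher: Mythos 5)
Your proposal is correct and follows essentially the same route as the paper: part (i) is the standard consequence of bisectoriality plus the bounded holomorphic functional calculus supplied by Theorem~\ref{Thm:Ass:Main}, part (ii)'s $\norm{\Pi_B u}\simeq\|\sqrt{\Pi_B^2}u\|$ uses exactly the paper's identities involving $\sgn(\Pi_B)$, and the remaining domain equality and equivalence $\norm{\Gamma u}+\norm{\adj{\Gamma}_B u}\simeq\norm{\Pi_B u}$ come from the Hodge-type decomposition, which the paper obtains by invoking \S4 of \cite{AKMC} (it is a consequence of (H1)--(H3) alone, not of the quadratic estimate). One small simplification: $(\adj{\Gamma}_B)^2=0$ follows directly from (H3) and needs no separate appeal to nilpotency of $\adj{\Gamma}$.
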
 
To prove part (ii), we use the identities $  \Pi_B u = \sgn(\Pi_B)\sqrt{{\Pi_B}^2}u$ 
and $\sqrt{{\Pi_B}^2}u = \sgn(\Pi_B) \Pi_B u$ for all $u \in \dom(\Pi_B)$, 
together with the bound on  $\sgn(\Pi_B)$. 

\subsection{Stability of perturbations} 
\label{Sect:Res:Stab}

It is a consequence of the fact that the estimates 
in Theorem \ref{Thm:Ass:Main} hold for a class of operators 
with complex measurable coefficients
$B_i$, that  operators such as $\sgn(\Pi_B)$ 
are also stable under small perturbations in $B$.
 
We provide the following adaption of Theorem 6.4 in
\cite{AKMC} with a minor modification. 
In the  following theorem,
$\Hil$ denotes an abstract Hilbert
space.
We say that a 
family $\set{T(\zeta)}_{\zeta\in U}$  
of $\omega$-bisectorial operators 
has a \emph{uniformly bounded holomorphic
functional calculus} if each operator $T(\zeta)$ has a bounded 
holomorphic functional calculus on the same sector $S^o_\mu\union \set{0}$
with a bound which is uniform in $\zeta\in U$. 

\begin{theorem}[Holomorphic dependence]
\label{Thm:Res:HolDep}
Let $U \subset \C$ be an open set 
and $B_1, B_2: U \to \bddlf(\Hil)$
be holomorphic and suppose
that $(\Gamma,B_1(\zeta),B_2(\zeta))$
satisfy (H1)-(H3) uniformly for
all $\zeta \in U$.
Suppose further that
$\Pi_{B(\zeta)}$
has a uniformly bounded holomorphic functional 
calculus  on $S_{\mu}^o\cup\{0\}$ for some $\omega < \mu < \frac{\pi}{2}$
(where $\omega$ is the angle of sectoriality). Let $f \in \Hol^\infty(S_{\mu}^o)$. Then the map $\zeta \mapsto f(\Pi_{B(\zeta)})$
is holomorphic  on $U$. 
\end{theorem}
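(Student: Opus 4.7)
The plan is to reduce the theorem to the holomorphic dependence of the resolvents $\zeta \mapsto (zI - \Pi_{B(\zeta)})^{-1}$ via the Riesz--Dunford contour integral, and then transfer holomorphicity from $\psi \in \Psi(S_\mu^o)$ to general $f \in \Hol^\infty(S_\mu^o)$ by a Vitali-type argument using the uniform functional calculus bound.

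First I would establish that for each fixed $z \in \C \setminus S_\mu$, the map $\zeta \mapsto (zI - \Pi_{B(\zeta)})^{-1}$ is holomorphic on $U$ as a $\bddlf(\Hil)$-valued function. Writing $\Pi_{B(\zeta)} = \Gamma + B_1(\zeta)\adj{\Gamma}B_2(\zeta)$ and using the second resolvent identity,
\begin{equation*}
(zI - \Pi_{B(\zeta)})^{-1} - (zI - \Pi_{B(\zeta_0)})^{-1}
= (zI - \Pi_{B(\zeta)})^{-1}\bigl(\Pi_{B(\zeta)} - \Pi_{B(\zeta_0)}\bigr)(zI - \Pi_{B(\zeta_0)})^{-1},
\end{equation*}
the perturbation $\Pi_{B(\zeta)} - \Pi_{B(\zeta_0)}$ factors through the bounded holomorphic $\zeta$-dependence of $B_1(\zeta), B_2(\zeta)$ composed with the fixed closed operator $\adj{\Gamma}$, on $\ran(\chi^+(\Pi_{B(\zeta_0)}))$-type pieces; the uniform resolvent bounds in the hypotheses absorb $\adj{\Gamma}(zI - \Pi_{B(\zeta_0)})^{-1}$ via the identity $\adj{\Gamma}(zI - \Pi_{B(\zeta_0)})^{-1} = B_1(\zeta_0)^{-1}(\Pi_{B(\zeta_0)} - \Gamma)B_2(\zeta_0)^{-1}(zI-\Pi_{B(\zeta_0)})^{-1}$ on the appropriate subspace, and one then deduces norm-differentiability in $\zeta$ with derivative given by the usual perturbation formula.

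Next, for $\psi \in \Psi(S_\mu^o)$, the contour formula
\begin{equation*}
\psi(\Pi_{B(\zeta)}) = \frac{1}{2\pi\imath} \oint_{\gamma} \psi(z)(zI - \Pi_{B(\zeta)})^{-1}\, dz
\end{equation*}
converges absolutely and uniformly in $\zeta$ on compact subsets of $U$ (again by the uniform resolvent bounds), so $\zeta \mapsto \psi(\Pi_{B(\zeta)})$ is holomorphic on $U$ as a limit of Riemann sums of holomorphic functions. To extend to arbitrary $f \in \Hol^\infty(S_\mu^o)$, choose a uniformly bounded sequence $(\psi_n) \subset \Psi(S_\mu^o)$ with $\psi_n \to f\rest{S_\mu^o}$ in the compact-open topology and $\psi_n(0) \to f(0)$. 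By the uniformly bounded functional calculus hypothesis and the convergence lemma underlying the functional calculus construction in \S2, $\psi_n(\Pi_{B(\zeta)}) \to f(\Pi_{B(\zeta)})$ in the strong operator topology for each $\zeta$, with operator norms bounded uniformly in both $n$ and $\zeta$ on compact subsets of $U$.

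Finally, to conclude that the strong pointwise limit is holomorphic, I would test against vectors: for each $u, v \in \Hil$, the scalar functions $\zeta \mapsto \inprod{\psi_n(\Pi_{B(\zeta)})u, v}$ are holomorphic and locally uniformly bounded, and converge pointwise to $\zeta \mapsto \inprod{f(\Pi_{B(\zeta)})u, v}$. Vitali's theorem then forces local uniform convergence and holomorphicity of the limit, so $\zeta \mapsto f(\Pi_{B(\zeta)})$ is weakly (hence, being locally bounded in operator norm, also strongly and then norm-) holomorphic on $U$. The main obstacle is the last step: pure strong convergence of $\psi_n(\Pi_{B(\zeta)})$ does not \emph{a priori} give norm holomorphicity, and one must invoke the uniform operator bound from the hypothesis together with the fact that weak holomorphicity of a locally bounded operator-valued function coincides with norm holomorphicity. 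The verification of the uniform local norm bound on $\psi_n(\Pi_{B(\zeta)})$, which rests on the uniformly bounded holomorphic functional calculus assumption across the family, is the technical crux.
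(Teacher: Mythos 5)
Your overall strategy is the same as the one the paper relies on: the proof is cited almost verbatim from Theorem~6.4 of \cite{AKMC}, and your plan --- holomorphy of resolvents, then of $\psi(\Pi_{B(\zeta)})$ via the Riesz--Dunford contour, then a Vitali/Montel passage to $f\in\Hol^\infty(S^o_\mu)$ using the assumed uniform bound in place of AKMC's Theorem~2.10 --- is exactly that argument. There are, however, two concrete places where what you have written would not go through as stated.

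First, the identity
$\adj{\Gamma}(zI-\Pi_{B(\zeta_0)})^{-1}
 = B_1(\zeta_0)^{-1}(\Pi_{B(\zeta_0)}-\Gamma)B_2(\zeta_0)^{-1}(zI-\Pi_{B(\zeta_0)})^{-1}$
presumes that $B_1(\zeta_0)$ and $B_2(\zeta_0)$ are invertible on $\Hil$, which (H2) does not give: the coercivity there is only on $\ran(\adj\Gamma)$ and $\ran(\Gamma)$ respectively, and in the applications (e.g.\ the block matrices built from $a$ and $A$ in \S\ref{Sect:App}) the $B_i$ are genuinely non-invertible. The correct way to absorb the unbounded factor $\adj\Gamma$ is the one in Lemma~6.1 of \cite{AKMC}: under (H1)--(H3) the operators $\Gamma R_t^{B}$, $\adj{\Gamma}_B R_t^{B}$, and their companions are uniformly bounded via the Hodge-type decomposition of $\Hil$, and the second resolvent identity is then combined with these uniform bounds and the holomorphy of $\zeta\mapsto B_i(\zeta)$. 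Your conclusion is right, but that particular identity should not appear.

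Second, your Vitali step only handles $\lim_n \psi_n(\Pi_{B(\zeta)})$. For $f$ with $f(0)\neq 0$, the definition in \S\ref{prelim} is
$f(\Pi_{B(\zeta)})u = f(0)\,\proj_{\nul(\Pi_{B(\zeta)})}u + \lim_n\psi_n(\Pi_{B(\zeta)})u$,
and the strong limit $\lim_n\psi_n(\Pi_{B(\zeta)})$ is $0$ on $\nul(\Pi_{B(\zeta)})$, so it does not see the first term. You need the additional observation that $\zeta\mapsto\proj_{\nul(\Pi_{B(\zeta)})}$ is itself holomorphic. This follows once your argument is applied to $\chi^\pm$ (which vanish at $0$, so the issue does not arise there), because then $\proj_{\nul(\Pi_{B(\zeta)})} = I - \chi^+(\Pi_{B(\zeta)}) - \chi^-(\Pi_{B(\zeta)})$; alternatively one can use $\proj_{\nul(\Pi_{B(\zeta)})} = \slim_{s\to\infty} P_s^{B(\zeta)}$ together with uniform bounds. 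Either way, a sentence addressing the nullspace component is needed to complete the reduction from $\Psi$ to $\Hol^\infty$.
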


\begin{proof} This
 claim is proved in a similar way to the
first part of Theorem 6.4 in \cite{AKMC},
with the exception that instead of
invoking Theorem 2.10 in \cite{AKMC},
we note that the uniformly bounded 
holomorphic functional 
calculus assumption  is sufficient.
\end{proof}

 Next, consider the situation of $\Hil = \Lp{2}(\cV)$.
Adapting the construction of \cite{AKMC},
we define the following Hilbert space
$$\mathcal{K} = \Lp{2}(\cM \times (0,\infty), \cV; \frac{d\mu dt}{t}).$$
Then, for $\psi \in \Psi(S_{\mu}^o)$,
$t > 0$ and almost all $x \in \cM$, we
define the  operator 
$\Sq_{B(\zeta)}(\psi): \Hil \to \mathcal{K}$ by
$(\Sq_{B(\zeta)}(\psi)u)(x,t)  = \psi(t\Pi_{B(\zeta)})u(x)$.

\begin{theorem}
Under the hypothesis of Theorem \ref{Thm:Res:HolDep},
and the additional assumption that $\Hil = \Lp{2}(\cV)$,
whenever
$\omega < \mu < \frac{\pi}{2}$
(where $\omega$ is the angle of sectoriality),
the map $\zeta \mapsto \Sq_{B(\zeta)}(\psi)$
is holomorphic on $U$ for all $\psi \in \Psi(S_{\mu}^o)$.
\end{theorem}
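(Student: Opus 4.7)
The plan is to deduce holomorphy of the $\mathcal{K}$-valued map $\zeta\mapsto \Sq_{B(\zeta)}(\psi)$ from the pointwise-in-$t$ holomorphy provided by \thmref{Thm:Res:HolDep}, via Morera's theorem combined with a Fubini argument. The uniformly bounded holomorphic functional calculus supplies exactly the quadratic estimate needed to justify the interchange of integrals and, at the end, to upgrade strong to norm holomorphy.

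First I would fix $t>0$ and observe that the map $\xi\mapsto \psi(t\xi)$ lies in $\Psi(S_\mu^o)\subset \Hol^\infty(S_\mu^o)$, so \thmref{Thm:Res:HolDep} immediately yields that $\zeta\mapsto\psi(t\Pi_{B(\zeta)})$ is holomorphic from $U$ into $\bddlf(\Hil)$ for every fixed $t>0$. In particular, for any $u,w\in\Hil$, the scalar function $\zeta\mapsto \inprod{\psi(t\Pi_{B(\zeta)})u,w}_{\Hil}$ is holomorphic on $U$.

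Next I would establish weak holomorphy of $\zeta\mapsto \Sq_{B(\zeta)}(\psi)u$ as a $\mathcal{K}$-valued map. Fix $u\in\Hil$ and $v\in\mathcal{K}$, and set
$$ F_{u,v}(\zeta) = \inprod{\Sq_{B(\zeta)}(\psi)u,v}_{\mathcal{K}} = \int_0^\infty \inprod{\psi(t\Pi_{B(\zeta)})u,v(\cdot,t)}_{\Hil}\,\tfrac{dt}{t}. $$
The uniformly bounded holomorphic functional calculus on $S_\mu^o\cup\set{0}$ produces a quadratic estimate $\int_0^\infty \norm{\psi(t\Pi_{B(\zeta)})u}^2\,\tfrac{dt}{t}\leq C_\psi^2 \norm{u}^2$ with $C_\psi$ independent of $\zeta\in U$, and Cauchy--Schwarz then gives the pointwise-in-$\zeta$ bound $|F_{u,v}(\zeta)|\leq C_\psi\norm{u}\norm{v}_{\mathcal{K}}$. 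This uniform $\zeta$-bound legitimises Fubini on any triangle $\Delta\subset U$, producing
$$ \oint_\Delta F_{u,v}(\zeta)\,d\zeta = \int_0^\infty \oint_\Delta \inprod{\psi(t\Pi_{B(\zeta)})u,v(\cdot,t)}_{\Hil}\,d\zeta\,\tfrac{dt}{t} = 0, $$
since each inner contour integral vanishes by Cauchy's theorem applied to the holomorphic scalar from the previous step. Morera's theorem therefore gives holomorphy of $F_{u,v}$ on $U$.

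Since $v\in\mathcal{K}$ was arbitrary, $\zeta\mapsto \Sq_{B(\zeta)}(\psi)u$ is weakly, and hence (by Dunford's theorem for Banach-space-valued maps) strongly, holomorphic into the Hilbert space $\mathcal{K}$. Combined with the locally uniform bound $\norm{\Sq_{B(\zeta)}(\psi)}_{\bddlf(\Hil,\mathcal{K})}\leq C_\psi$ coming from the same quadratic estimate, the standard argument (strong holomorphy of $T(\zeta)u$ for each $u$ together with local uniform operator-norm boundedness yields norm holomorphy, via the Cauchy-type coefficients $a_n = \frac{1}{2\pi\imath}\oint T(\zeta)(\zeta-\zeta_0)^{-n-1}\,d\zeta$ which are genuine bounded operators of norm $\leq M r^{-n}$) upgrades this to norm holomorphy of $\zeta\mapsto \Sq_{B(\zeta)}(\psi)$ as a $\bddlf(\Hil,\mathcal{K})$-valued map. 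The main obstacle is verifying the Fubini exchange uniformly in $\zeta$ on compact subsets of $U$; this is exactly where the \emph{uniformly} bounded holomorphic functional calculus hypothesis is indispensable, since it produces an $L^1(dt/t)$ majorant for the integrand that is independent of $\zeta$ over the triangle.
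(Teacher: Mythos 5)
Your proof is correct and, up to the fact that the paper's proof of this theorem simply defers to Theorem~6.4 of \cite{AKMC}, it spells out exactly the argument being referenced: fixed-$t$ holomorphy of $\zeta\mapsto\psi(t\Pi_{B(\zeta)})$ from Theorem~\ref{Thm:Res:HolDep} (which is where the observation $\norm{\psi_t}_\infty=\norm{\psi}_\infty$ enters), weak $\mathcal{K}$-valued holomorphy via a Fubini/Morera argument underpinned by the uniform-in-$\zeta$ quadratic estimate, and then an upgrade to operator-norm holomorphy via Dunford's theorem together with the uniform bound $\norm{\Sq_{B(\zeta)}(\psi)}\leq C_\psi$.

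One small point you should make explicit: the classical Morera theorem requires continuity of $F_{u,v}$, which does not follow from the uniform bound alone and which you do not verify. It is easily supplied. Split $\int_0^\infty=\int_0^a+\int_a^b+\int_b^\infty$; the two tails are small uniformly in $\zeta$ by Cauchy--Schwarz together with the uniform quadratic estimate, since they are each dominated by $C_\psi\norm{u}\big(\int_{(0,a)\cup(b,\infty)}\norm{v(\cdot,t)}^2\,\tfrac{dt}{t}\big)^{1/2}$, while on $[a,b]$ dominated convergence applies because the integrand is bounded by $C\norm{\psi}_\infty\norm{u}\,\norm{v(\cdot,t)}\in L^1([a,b],\tfrac{dt}{t})$ and converges pointwise as $\zeta\to\zeta_0$ by the fixed-$t$ holomorphy. (Alternatively, invoke the $L^1_{\mathrm{loc}}$/Weyl-lemma version of Morera, which needs no continuity.) Neither fix alters the structure of your argument, which matches the intended one.
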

\begin{proof}
We note that our choice of $\mathcal{K}$
is an adequate replacement to $\mathcal{K}$
in the proof of Theorem 6.4 in \cite{AKMC}.
Also, for $t > 0$, the function $\psi_t(\zeta) = \psi(t\zeta) \in \Psi(S_{\mu}^o)$
and $\norm{\psi_t}_\infty = \norm{\psi}_\infty$.
Therefore, the uniformly bounded 
holomorphic functional calculus
assumption holds uniformly in $t > 0$ and 
is again an adequate substitution
to run the rest of the argument 
of the proof of Theorem 6.4 in \cite{AKMC}.
\end{proof}

\begin{corollary}
\label{Cor:Res:Stab}
Let $\Hil, \Gamma, B_1, B_2, \kappa_1, \kappa_2$ satisfy
(H1)-(H8) and take $\eta_i < \kappa_i$. 
Set $0 < \hat{\omega}_i < \frac{\pi}{2}$ by 
$\cos\hat{\omega}_i = \frac{ \kappa_i - \eta_i}{\norm{B_i}_\infty + \eta_i}$
and $\hat{\omega} = \frac{1}{2}(\hat{\omega}_1 + \hat{\omega}_2)$.
Let $A_i \in \Lp{\infty}(\bddlf(\cV))$ satisfy
\begin{enumerate}[(i)]
\item $\norm{A_i}_\infty \leq \eta_i$,
\item $A_1 A_2 \ran(\Gamma), B_1 A_2 \ran(\Gamma),
	A_1 B_2 \ran(\Gamma) \subset \nul(\Gamma)$, and 
\item $A_2 A_1 \ran(\adj{\Gamma}), B_2 A_1 \ran(\adj{\Gamma}),
	A_2 B_1 \ran(\adj{\Gamma}) \subset \nul(\adj{\Gamma})$.
\end{enumerate}
Letting $\hat{\omega} < \mu < \frac{\pi}{2}$,
we have:
\begin{enumerate}[(i)]
\item for all $f \in \Hol^\infty(S_{\mu}^o)$,
	$$\norm{f(\Pi_B) - f(\Pi_{B+A})}
	\lesssim (\norm{A_1}_\infty + \norm{A_2}_\infty)
	\norm{f}_\infty,\qquad\text{and}$$
\item for all $\psi \in \Psi(S_{\mu}^o)$,
	$$\int_{0}^\infty \norm{\psi(t\Pi_{B})u - \psi(t\Pi_{B + A})u}^2 \frac{dt}{t}
	\lesssim (\norm{A_1}_\infty^2 + \norm{A_2}_\infty^2)
	\norm{u},$$
	whenever $u \in \Hil$.
\end{enumerate}  
The implicit constants depend on (H1)-(H8) and $\eta_i$. 
\end{corollary}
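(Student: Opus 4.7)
The plan is to follow the template of Theorem 6.4 in \cite{AKMC}: embed the perturbation into a holomorphic family, apply the dependence results established just above, and extract a Lipschitz bound from a Cauchy derivative estimate.

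For $\zeta \in \C$, set $B_1(\zeta) = B_1 + \zeta A_1$ and $B_2(\zeta) = B_2 + \zeta A_2$. The first step is to verify that the triple $(\Gamma, B_1(\zeta), B_2(\zeta))$ satisfies (H1)--(H3) uniformly for $\zeta$ in some disk $U \supset [0,1]$. Only (H2) and (H3) require attention. The accretivity bound
\[
\re \inprod{B_i(\zeta) u, u} \geq (\kappa_i - |\zeta|\eta_i) \norm{u}^2
\]
is strictly positive whenever $|\zeta| < \kappa_i/\eta_i$, and since $\eta_i < \kappa_i$ this contains a neighbourhood of $[0,1]$. For (H3), expand
\[
B_1(\zeta) B_2(\zeta) = B_1 B_2 + \zeta(B_1 A_2 + A_1 B_2) + \zeta^2 A_1 A_2,
\]
and observe that each of the four summands maps $\ran(\Gamma)$ into $\nul(\Gamma)$ by the original (H3) together with hypothesis (ii) of the corollary; hypothesis (iii) handles the symmetric claim for $\adj{\Gamma}$. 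Thus (H3) persists for all $\zeta \in \C$.

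The standard sectoriality calculation shows that for $|\zeta| \leq 1$ the worst-case angle of accretivity of $B_i(\zeta)$ is exactly the $\hat{\omega}_i$ defined in the statement, with $\cos \hat{\omega}_i = (\kappa_i - \eta_i)/(\norm{B_i}_\infty + \eta_i)$; shrinking $U$ slightly ensures $\Pi_{B(\zeta)}$ has bisectoriality angle at most $\hat\omega < \mu$ throughout $U$. Theorem~\ref{Thm:Ass:Main} then applies uniformly in $\zeta \in U$, yielding a uniformly bounded holomorphic functional calculus on $S^o_\mu \cup \{0\}$. Theorem~\ref{Thm:Res:HolDep}, together with the theorem immediately following it, furnishes the holomorphy of $\zeta \mapsto f(\Pi_{B(\zeta)}) \in \bddlf(\Hil)$ and $\zeta \mapsto \Sq_{B(\zeta)}(\psi) \in \bddlf(\Hil, \mathcal{K})$, with uniform bounds $C\norm{f}_\infty$ and $C\norm{\psi}_\infty$ respectively.

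For the Lipschitz bounds the key move is to rescale. Setting $\epsilon := \max(\norm{A_1}_\infty, \norm{A_2}_\infty)$ and rerunning the argument with $A$ replaced by $A/\epsilon$ (whose components have norm at most $1$), the condition (H2) now fails only when $|\zeta| \gtrsim \min_i \kappa_i$, so the disk of holomorphy for $g(\zeta) := f(\Pi_{B + \zeta A})$ has radius $R \simeq 1/\epsilon$. Cauchy's derivative estimate gives $\norm{g(1) - g(0)} \leq \sup_{[0,1]} \norm{g'} \leq R^{-1} \sup_U \norm{g} \lesssim \epsilon \norm{f}_\infty$, which is (i). The same rescaling applied to the $\bddlf(\Hil, \mathcal{K})$-valued map $\zeta \mapsto \Sq_{B + \zeta A}(\psi)$ gives
\[
\norm{\Sq_{B+A}(\psi) - \Sq_B(\psi)}_{\bddlf(\Hil, \mathcal{K})} \lesssim \epsilon \norm{\psi}_\infty,
\]
and unpacking the $\mathcal{K}$-norm yields (ii).

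The main obstacle is the bookkeeping: one must ensure that every implicit constant arising along the way depends only on $\kappa_i$, $\eta_i$, $\norm{B_i}_\infty$ and the geometric data controlling (H1)--(H8), never on the specific $\norm{A_i}_\infty$. Only then does the rescaling faithfully convert a uniform sup-bound into a linear-in-$\epsilon$ estimate rather than one linear in $\eta_i$.
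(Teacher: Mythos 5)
Your proposal follows essentially the same route as the paper, which simply invokes Theorem 6.5 of \cite{AKMC} and notes that conditions (ii)--(iii) guarantee $\tilde B_i(\zeta) = B_i + \zeta A_i$ satisfies (H3) — exactly the expansion and observation you make. One minor imprecision: the disk $U$ is ultimately constrained not by where (H2) holds (radius $\sim \kappa_i/\|A_i\|_\infty$) but by where the bisectoriality angle stays strictly below $\mu$ (radius $\sim \eta_i''/\|A_i\|_\infty$ for some $\eta_i \in (\eta_i, \kappa_i)$ chosen via the gap $\mu - \hat\omega$); this still gives $R \gtrsim 1/\epsilon$, so your Cauchy/rescaling estimate goes through, but the phrase ``angle at most $\hat\omega$ throughout $U$'' should read ``at most some $\mu'' < \mu$,'' since the whole point of the margin $\mu - \hat\omega$ is to let $U$ extend past $|\zeta| = 1$.
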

\begin{proof}
The argument proceeds in a similar way to the 
proof of Theorem 6.5 in \cite{AKMC}.
The conditions on $A_i$ guarantee
that $\tilde{B}_i(\zeta) = (B_i + \zeta A_i)$
satisfies (H3). 
This is, in fact, 
a necessary amendment to 
the original proof 
of Theorem 6.5 in \cite{AKMC}.
\end{proof} 
\section{\Poincare Inequalities}

 In this short section we  show that,  under 
appropriate geometric conditions, 
we can bootstrap the \Poincare inequality
on functions to the dyadic \Poincare inequality
on the bundle. As in \cite{Morris3}, we make
the following definition.

\begin{definition}[Local \Poincare inequality]
\index{Local \Poincare inequality}
We say that $\cM$ satisfies a \emph{local \Poincare inequality}
if there exists $c \geq 1$ such that
for all $f \in \SobH{1}(\cM)$, 
\begin{equation*}
\norm{f - f_{B}}_{\Lp{2}(B)} \leq c\ \rad(B) \norm{f}_{\SobH{1}(B)}
\tag{$\text{P}_{\text{loc}}$}
\label{Def:Pre:Ploc}
\end{equation*}
for all balls $B$ in $\cM$
such that $\rad(B) \leq 1$.
\end{definition}

\begin{remark}
Note that we allow the Sobolev norm
$\norm{\mdot}_{\SobH{1}(B)}$ over the ball on the right
of the inequality, 
rather than simply $\norm{\conn \mdot}$.
\end{remark}

The following proposition then illustrates
that under appropriate
gradient bounds on the 
GBG coordinate basis,
we can obtain 
a dyadic \Poincare inequality in
the bundle.

\begin{proposition}
\label{Prop:App:DyPoin}
Suppose that $\cM$ satisfies 
both (\ref{Def:Pre:Eloc}) and
(\ref{Def:Pre:Ploc}). 
Furthermore, suppose that
that there exists $C_G > 0$
such that in each GBG chart
with basis denoted by $\set{e^i}$
we have $\modulus{\conn e^i} \leq C_G$
for each $i$. Then,
for all $u \in \dom(\conn) = \SobH{1}(\cV)$, $t \leq \scale$,  $\Q \in \DyQ_t$, and $r \geq \frac{C_1}{\delta}$, 
$$
\int_{B} \modulus{u - u_{\Q}}^2\ d\mu \lesssim 
	(1 + e^{\lambda rt})(rt)^2 \int_{B} (\modulus{\conn u}^2 + \modulus{u}^2)\ d\mu$$
where  $B = B(x_{\Q},rt)$.
\end{proposition}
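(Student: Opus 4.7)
The plan is to reduce the bundle inequality to the scalar \Poincare inequality (\ref{Def:Pre:Ploc}) by working in the GBG coordinates attached to $\parent{\Q}$. Since $d(x_\Q, x_{\parent{\Q}}) \leq \diam \parent{\Q} \leq C_1 \delta^\jscale \leq \brad/5$, the GBG chart on $B(x_{\parent{\Q}},\brad)$ furnishes a frame $\set{e^i}$ with $C^{-1}I \leq \mh \leq CI$. Writing $u = u_i e^i$ in this frame gives $\modulus{u}^2 \simeq \sum_i \modulus{u_i}^2$, and the GBG cube average becomes $u_\Q = (u_i)_\Q e^i$ on $B(x_{\parent{\Q}},\brad)$ with scalar averages $(u_i)_\Q = \fint_\Q u_i\, d\mu$. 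Differentiating $u = u_i e^i$ and invoking the hypothesis $\modulus{\conn e^i} \leq C_G$ yields the pointwise bound $\modulus{\conn u_i}^2 \lesssim \modulus{\conn u}^2 + \modulus{u}^2$.

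I would then decompose $B = (B \intersect B(x_{\parent{\Q}},\brad)) \cup (B \setminus B(x_{\parent{\Q}},\brad))$. On the second piece, $u_\Q \equiv 0$ by definition, so its contribution is at most $\int_B \modulus{u}^2\, d\mu$; this piece is non-empty only when $rt \gtrsim \brad$, in which case $(rt)^2 \gtrsim 1$ absorbs the term into the right-hand side. On the first piece, the metric equivalence above reduces the problem to proving a scalar dyadic \Poincare inequality on each component, namely $\int_B \modulus{u_i - (u_i)_\Q}^2\, d\mu \lesssim (1 + e^{\lambda rt})(rt)^2 \int_B (\modulus{\conn u_i}^2 + \modulus{u_i}^2)\, d\mu$.

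For each scalar component I would execute the standard two-step argument. First, swap the cube average for the ball average via $\int_B \modulus{u_i - (u_i)_\Q}^2 \leq 2\int_B \modulus{u_i - (u_i)_B}^2 + 2\mu(B)\modulus{(u_i)_B - (u_i)_\Q}^2$, and bound the last term using Cauchy--Schwarz on $\Q$ together with the doubling estimate $\mu(B)/\mu(\Q) \lesssim r^\kappa e^{\lambda rt}$, which follows from the inclusion $B(x_\Q, a_0 \delta^j) \subset \Q$ in \thmref{Thm:Dya:Christ} combined with (\ref{Def:Pre:Eloc}) and the standing assumption $r \geq C_1/\delta$. Second, apply (\ref{Def:Pre:Ploc}) along a chain of balls of radius at most $1$ covering $B$, whose cardinality grows at most exponentially in $rt$ by (\ref{Def:Pre:Eloc}), to obtain a large-radius scalar \Poincare estimate. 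Substituting the gradient comparison from the first paragraph and resumming over $i$ finishes the argument. The main technical step is the chaining argument that upgrades (\ref{Def:Pre:Ploc}) to all radii $rt$ while tracking the doubling and \Poincare constants carefully enough that the final prefactor takes the advertised form $(1 + e^{\lambda rt})(rt)^2$, in particular so that the exponential in $rt$ absorbs the polynomial factor $r^\kappa$ coming from the swap.
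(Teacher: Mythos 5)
Your proposal assembles the right ingredients (GBG frame on $B(x_{\parent{\Q}},\brad)$, metric equivalence $\modulus{u}^2\simeq\sum_i\modulus{u_i}^2$, the swap from cube average to ball average with the doubling factor $\mu(B)/\mu(\Q)$, and the frame-gradient bound $\sum_i\modulus{\conn u_i}^2\lesssim\modulus{\conn u}^2+\modulus{u}^2$), but it takes an unnecessary and under-specified detour through a chaining argument, which is where the paper's proof differs and where a genuine gap in your argument lies.

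The paper splits on the size of $rt$ rather than decomposing $B$ geometrically. When $rt\geq\brad/5$, \emph{no \Poincare inequality is used at all}: one simply writes $\norm{u-u_{\Q}}^2_{\Lp{2}(B)}\lesssim\norm{u}^2_{\Lp{2}(B)}+\norm{u_{\Q}}^2_{\Lp{2}(B)}$, controls $\norm{u_{\Q}}^2_{\Lp{2}(B)}\lesssim(1+r^\kappa e^{\lambda rt})\norm{u}^2_{\Lp{2}(B)}$ by Cauchy--Schwarz and doubling, and then the lower bound $rt\geq\brad/5$ lets one insert the factor $(rt)^2$ for free. When $rt<\brad/5$, the ball $B=B(x_{\Q},rt)$ is entirely contained in $B(x_{\parent{\Q}},\brad)$ and, crucially, has radius strictly less than $\brad/5\leq 1$, so (\ref{Def:Pre:Ploc}) applies \emph{directly}, without any extension to large radii. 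The local \Poincare hypothesis is only ever invoked on balls where it is literally valid.

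Your approach instead applies (\ref{Def:Pre:Ploc}) on the inner piece $B\intersect B(x_{\parent{\Q}},\brad)$, which can have radius up to $\brad\leq 5$, and therefore you are forced to chain. This introduces two problems you have not resolved. First, the chaining would have to be confined to $B(x_{\parent{\Q}},\brad)$ (the components $u_i$ only exist there), and $B\intersect B(x_{\parent{\Q}},\brad)$ is generally not a ball, so the standard chain-of-balls argument does not apply as stated. Second, even if it could be made to work, a chaining argument typically produces a constant of the form $e^{c\,rt}$ with $c$ depending on how the doubling and \Poincare constants compound along the chain; there is no a priori reason this $c$ equals the $\lambda$ appearing in (\ref{Def:Pre:Eloc}), so it is not clear the resulting bound fits under the advertised prefactor. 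You flag this as ``the main technical step,'' which is precisely the symptom: the step is hard because the route is wrong. Replacing your geometric decomposition of $B$ by the paper's dichotomy on $rt$ versus $\brad/5$ eliminates the chaining entirely and lets (\ref{Def:Pre:Ploc}) do its job only at scales where it is hypothesised to hold.
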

\begin{proof}
First, consider the case $(rt) \geq \frac{\brad}{5}$.
Then
$$\norm{u - u_{\Q}}_{\Lp{2}(B)}^2
	\lesssim \norm{u}^2_{\Lp{2}(B)} + \norm{u_{\Q}}^2_{\Lp{2}(B)}.$$
Recalling that $\parent{\Q}$ is the GBG cube of $\Q$, 
\begin{multline*}
\int_{B} \modulus{u_{\Q}}^2\ d\mu
	= \int_{B} \modulus{ \cbrac{\fint_{\Q} u_i\ d\mu}\ind{B(x_{\parent{\Q}},\brad)} e^i}^2\ d\mu
	\simeq \sum_{i} \int_{B} \modulus{\fint_{\Q} u_i\ d\mu}^2\ind{B(x_{\parent{\Q}},\brad)} d\mu \\
	\leq \sum_{i} \int_{B} \cbrac{ \fint_{\Q} \modulus{u_i}^2\ d\mu}\ d\mu
	\leq \frac{\mu(B)}{\mu(\Q)} \int_{B} \modulus{u}^2\ d\mu
	\lesssim (1 + r^\kappa e^{\lambda rt}) \norm{u}_{\Lp{2}(B)}^2.
\end{multline*}
Thus,
$
\norm{u - u_{\Q}}^2_{\Lp{2}(B)} 
	\lesssim (1 + r^{\kappa} e^{\lambda rt}) (rt)^2 \norm{u}^2_{\Lp{2}(B)}$
since $rt \geq \frac{\brad}{5}$.

Next, suppose that $(rt) < \frac{\brad}{5}$.
 It is easy to see
that we have $B(x_{\Q},rt) \subset B(x_{\parent{\Q}},\brad)$
and so, 
\begin{multline*}
\int_{B} \modulus{u - u_{\Q}}^2\ d\mu
	\simeq \sum_{i} \int_{B} 
		\modulus{u_i - \cbrac{\fint_{\Q}u_i\ d\mu}\ind{B(x_{\parent{\Q}},\brad)}}^2\ d\mu \\
	\leq \sum_{i} \int_{B} \modulus{u_i - (u_i)_B}^2\ d\mu 
		+ \sum_{i} \int_{B} \modulus{(u_i)_B - \cbrac{\fint_{\Q} u_i\ d\mu}}^2\ d\mu
\end{multline*}
For the first term, we invoke
(\ref{Def:Pre:Ploc}) so that
$$
\sum_{i} \int_{B} \modulus{u_i - (u_i)_B}^2\ d\mu
	\lesssim (rt)^2 \int_{B} 
		\cbrac{\sum_{i} \modulus{\conn u_i}^2  + \modulus{u}^2}\ d\mu.$$
Now, for the second term,
\begin{multline*}
\sum_{i} \int_{B} \modulus{(u_i)_B - \cbrac{\fint_{\Q} u_i\ d\mu}}^2\ d\mu
	\leq \sum_{i} \frac{\mu(B)}{\mu(\Q)}  \int_{\Q} \modulus{u_i - (u_i)_B}^2\ d\mu \\
	\lesssim (1+ r^\kappa e^{\lambda rt}) (rt)^2 \int_{B} \cbrac{\sum_{i} \modulus{\conn u_i}^2 + \modulus{u}^2}\ d\mu
\end{multline*}

Next, we note that
$
\conn u = \conn(u_i) \tensor e^i + u_i \tensor \conn e^i$
and therefore, by the hypothesis $\modulus{\conn e^i} \leq C_{G}$,
$$\sum_{i} \modulus{\conn u_i}^2 \simeq
	\modulus{\conn u_i \tensor e^i}^2 
	\leq \modulus{\conn{u}}^2 + \modulus{u_i}^2 \modulus{\conn e^i}^2
	\lesssim \modulus{\conn{u}}^2 + \modulus{u}.$$
The proof is complete by combining these estimates.
\end{proof}
\section{Kato Square Root Estimates for Elliptic Operators}
\label{Sect:App}

\subsection{The Kato square root problem on vector bundles}

Here, we present the main applications
of Theorem \ref{Thm:Ass:Main} to 
uniformly elliptic operators which arise naturally
from a connection over a vector
bundle.  First, we describe a setup of
operators which is a generalisation
of \S1 of \cite{Morris3} (and before that from \cite{AKM2}),
making the necessary changes 
to facilitate the fact that we are
working, in general, on a non-trivial bundle.

Let $\Hil = \Lp{2}(\cV) \oplus \Lp{2}(\cV) \oplus \Lp{2}(\cotanb \cM \tensor \cV)$. 
As discussed in \S\ref{prelim}, $\conn: \SobH{1}(\cV) \subset \Lp{2}(\cV) \to \Lp{2}(\cotanb \cM \tensor \cV)$ is a closed densely defined operator, and so has a well defined adjoint $\adj{\conn}$, which we denote by $ -\divv: \dom(\divv) \subset \Lp{2}(\cotanb\cM \tensor \cV) \to \Lp{2}(\cV)$.

The reason for this notation is because when the the connection
$\conn$ and the  metric $\mh$ are compatible, then $\adj{\grad}$ has the form of a divergence in the  weak sense of Proposition \ref{Prop:App:Comp} below. First some notation.

For $v \in \Ck{\infty}(\cotanb\cM \tensor \cV)$, define
$\tr \conn v$ by contracting the first two indices of 
$\conn v \in \Ck{\infty}(\cotanb\cM \tensor \cotanb\cM \tensor \cV)$
over $\mg$, to yield
a section $\tr \conn v \in \Ck{\infty}(\cV)$. 

The connection
$\conn$ and the metric $\mh$ are \emph{compatible}
if the product rule
$$X(\mh(Y,Z)) = \mh(\conn[X]Y, Z) + \mh(Y, \conn[X]Z)$$
is satisfied for every $X \in \Ck{\infty}(\tanb \cM)$ and 
$Y,Z \in \Ck{\infty}(\cV)$. 
For such connection and metric pairs,  we have 
  $\divv=-\adj{\grad} =  \tr\conn$ in the following weak sense. 

\begin{proposition}
\label{Prop:App:Comp}
Suppose that the connection $\conn$ and the metric $\mh$   are  compatible.
Then, for all $T \in \Ck[c]{\infty}(\cV)$ 
and $P \in \Ck{\infty}(\cotanb \cM\tensor \cV)$,
$$
\int_{\cM} \inprod{ \conn T, P}\ d\mu = \int_{\cM} \inprod{T, -\tr \conn P}\ d\mu.$$
\end{proposition}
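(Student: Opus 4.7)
The plan is to reduce the identity to the divergence theorem by exhibiting a compactly supported vector field $X \in \Ck[c]{\infty}(\tanb\cM)$ whose Riemannian divergence equals the pointwise expression $\inprod{\conn T, P} + \mh(T, \tr\conn P)$. Since $T$ has compact support, so will $X$, and because $\cM$ is a smooth complete Riemannian manifold, the divergence theorem gives $\int_{\cM} \divv_{\mg} X\ d\mu = 0$, from which the claim is immediate. To build $X$, I would fix a local orthonormal frame $\set{e_j}$ for $\tanb\cM$ with dual coframe $\set{e^j}$, write $P = e^j \tensor P_j$ with $P_j \in \Ck{\infty}(\cV)$, and set $X = \mh(T, P_j)\, e_j$. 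Invariantly, $X$ is the $\mg$-metric dual of the smooth $1$-form $Y \mapsto \mh(T, P(Y))$, which guarantees frame-independence of what follows.

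The computation itself proceeds by two applications of compatibility. Compatibility of $\conn$ with $\mh$ gives
\[ e_j(\mh(T, P_j)) = \mh(\conn[e_j] T, P_j) + \mh(T, \conn[e_j] P_j), \]
while $\divv_{\mg} X = \sum_j \mg(\conn[e_j] X, e_j)$ in the frame unwinds to
\[ \divv_{\mg} X = \sum_j e_j(\mh(T, P_j)) + \sum_{j,k} \mh(T, P_k)\, \mg(\conn[e_j] e_k, e_j). \]
On the other side, expanding $(\conn[e_j] P)(e_j) = \conn[e_j] P_j - P(\conn[e_j] e_j)$ together with the orthonormal-frame expansion of $\conn[e_j] e_j$ yields
\[ \mh(T, \tr \conn P) = \sum_j \mh(T, \conn[e_j] P_j) - \sum_{j,k} \mg(\conn[e_j] e_j, e_k)\, \mh(T, P_k). \]
Combined with $\inprod{\conn T, P} = \sum_j \mh(\conn[e_j] T, P_j)$, the target equality $\divv_{\mg} X = \inprod{\conn T, P} + \mh(T, \tr\conn P)$ reduces to the antisymmetry $\mg(\conn[e_j] e_k, e_j) = -\mg(\conn[e_j] e_j, e_k)$, which is immediate from differentiating $\mg(e_k, e_j) = \kron_{kj}$ via the compatibility of the Levi-Civita connection with $\mg$.

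With this identity in hand, the divergence theorem applied to the compactly supported smooth vector field $X$ on $\cM$ delivers the conclusion. The only mild obstacle is avoiding sign errors while juggling the two compatibilities (of the Levi-Civita connection with $\mg$, and of $\conn$ with $\mh$) and keeping track of the interplay between the cotangent factor, where the trace is taken, and the $\cV$-factor, where the $\mh$-pairing occurs; the invariant description of $X$ as the $\mg$-dual of $Y \mapsto \mh(T, P(Y))$ serves as a useful consistency check that nothing depends on the chosen frame.
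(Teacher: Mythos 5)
Your proposal is correct and is essentially the same argument as the paper's: both proofs exhibit a compactly supported object built from $T$ and $P$ whose Riemannian divergence equals $\inprod{\conn T, P} + \mh(T, \tr\conn P)$, then finish with the divergence theorem. The paper works with the $1$-form $\inprod{T,P}_{\cV}$ in normal coordinates at a point (so $\conn dx^k = 0$ there and the tangent-bundle Christoffel terms drop out automatically), whereas you work with the metrically dual vector field in a local orthonormal frame and instead invoke the antisymmetry $\mg(\conn[e_j]e_k, e_j) = -\mg(\conn[e_j]e_j, e_k)$ to absorb those terms; this is the same cancellation expressed differently, and both presentations are valid.
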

\begin{proof}
Fix $x \in \cM$ so that we can locally write
$T = T_i e^i$ and $P = P_{kl} dx^k \tensor e^l$.
Next, define the $\cV$ ``inner product'' yielding a $1$-form by
$\inprod{T,P}_{\cV} = T_i P_{kl} \mh(e^i, e^l) dx^k.$

Now, to make calculations easier, further assume that
$\set{x^i}$ are normal coordinates at $x$. Then,
for any $X = X_k dx^k$, we have that the 
divergence at $x$ is $\divv X = \partial_k X_k$.
Thus, at $x$, 
\begin{align*}
\divv\inprod{T,P}_{\cV}
	= \sum_{k} &(\partial_k T_i) P_{kl} \mh(e^i, e^l) 
		+ \sum_{k} T_i P_{kl} \mh(\conn[\partial_k] e^i, e^l) \\
			&+ \sum_{k} T_i (\partial_k P_{kl}) \mh(e^i, e^l)
			+ \sum_{k} T_i P_{kl} \mh(e^i, \conn[\partial_k] e^l)
\end{align*}
by the compatibility of $\conn$ and $\mh$.

A calculation at $x$ then shows that 
$\conn T = \conn(T_i e^i) 
	= \partial_k T_i dx^k \tensor e^i + T_i dx^k \tensor \conn[\partial_k]e^i.$
Also, 
$\tr \conn P 
	= \sum_{k} \partial_k P_{kl} e^l + \sum_{k} P_{kl} \conn e^l,$
since we assumed normal coordinates at $x$, making $\conn dx^k = 0$.
Then, a direct calculation shows that 
\begin{align*}
\inprod{\conn T, P} 
	&= \sum_{k} (\partial_k T_i) P_{kl} \mh(e^i, e^l) 
		+ \sum_{k} T_i P_{kl} \mh(\conn[\partial_k] e^i, e^l),
\quad\text{and}\\
\inprod{T, \tr \conn P}
	&=  \sum_{k} T_i (\partial_k P_{kl}) \mh(e^i, e^l)
		+ \sum_{k} T_i P_{kl} \mh(e^i, \conn[\partial_k] e^l).
\end{align*}
Thus, at $x$,
$\divv \inprod{T,P}_{\cV} = \inprod{\conn T, P} + \inprod{T, \tr \conn P}.$

 By the compactness of $\spt T$,
it is easy to see that 
$\spt \inprod{T,P}_\cV,\ \spt \inprod{T, \tr \conn P}$ and  $\spt \inprod{\conn T, P}$ 
are all compact. Thus, we integrate this
equation over $\cM$ and
apply the divergence theorem to obtain the conclusion.
\end{proof}

We pause to introduce some notation. When
 $\cW,\ \tilde\cW$ are two vector bundles, define the new vector bundle $\bddlf(\cW, \tilde\cW)$
to mean the space of all maps $C: \cW \to \tilde\cW$ such that
for each $x \in \cM$, 
$C(x) \in \bddlf(\cW_x,\tilde\cW_x)$.
This is consistent with the previous
notation since 
$\bddlf(\cW) = \bddlf(\cW, \cW)$. 

With this notation in mind, 
let
$A_{00} \in \Lp{\infty}(\bddlf(\cV)),\ 
A_{01} \in \Lp{\infty}(\bddlf(\cotanb \cM \tensor \cV, \cV)),\ 
A_{10} \in \Lp{\infty}(\bddlf(\cV, \cotanb\cM \tensor \cV)),\ 
A_{11} \in \Lp{\infty}(\bddlf(\cotanb\cM \tensor \cV)).$
Then, define $A \in \Lp{\infty}(\bddlf(\cV \oplus (\cotanb\cM \tensor \cV)))$
by
$$
A = \begin{pmatrix} A_{00} & A_{01} \\ A_{10} & A_{11}\end{pmatrix}.$$
Furthermore,
let $a \in \Lp{\infty}(\bddlf(\cV))$. 
Set $B_1,B_2:\Hil\to \Hil$ by 
$$
B_1 = \begin{pmatrix} a & 0 \\ 0 & 0\end{pmatrix}
\quad\text{and}\quad
B_2 = \begin{pmatrix} 0 & 0 \\ 0 & A\end{pmatrix}\ .$$

Moreover, set
$$ S = \begin{pmatrix}I \\ \conn \end{pmatrix},\ 
\adj{S} = \begin{pmatrix} I & -\divv  \end{pmatrix},\  
\Gamma = \begin{pmatrix}0 & 0 \\ S & 0\end{pmatrix},\ 
\text{and}\ 
\adj{\Gamma} = \begin{pmatrix}0 & \adj{S} \\ 0 & 0\end{pmatrix}$$
and define the following divergence form 
operator $\Div_{A}: \dom(\Div_A) \subset \Lp{2}(\cV) \to \Lp{2}(\cV)$
by 
$$
\Div_{A}u = a\adj{S}ASu 
	= -a \divv (A_{11} \conn u) - a \divv(A_{10}u) 
	+ a A_{01} \conn u + a A_{00} u.$$

We apply Theorem \ref{Thm:Ass:Main} to prove the
Kato square root problem on vector bundles.

\begin{theorem}[Kato square root problem for vector bundles]
\label{Thm:App:KatoVB}
Suppose that $\cM$ satisfies
(\ref{Def:Pre:Eloc})
and both $\cV$ and $\cotanb \cM$
have generalised bounded geometry
(so that they are equipped with GBG coordinate systems),
and that
\begin{enumerate}[(i)]

\item 
$\cM$ satisfies (\ref{Def:Pre:Ploc}),

\item 
the GBG charts for $\cotanb\cM$ are induced
from coordinate systems on $\cM$,

\item
the connection $\conn$ and the metric $\mh$ are compatible,

\item 
there exists $C > 0$ such that
in each GBG chart $\set{e^j}$ 
for $\cV$ and $\set{dx^i}$ for $\cotanb \cM$,
we have that
$\modulus{\conn e^j},  \modulus{\partial_k \mh^{ij}}, 
\modulus{\partial_k \mg^{ij}} \leq C$ a.e., 

\item 
there exist $\kappa_1, \kappa_2 > 0$ such that
$$
\re\inprod{a u, u}  \geq \kappa_1 \norm{u}^2
\qquad\text{and}\qquad 
\re\inprod{A Sv, Sv} \geq \kappa_2 \norm{v}_{\SobH{1}}^2$$
for all $u \in \Lp{2}(\cV)$
and $v \in \SobH{1}(\cV)$, and

\item
we have that $\dom(\Lap) \subset \SobH{2}(\cV)$, and 
there exist $C' > 0$ such that
$$ \norm{\conn^2 u} \leq C' \norm{(I + \Lap)u}$$
whenever $u \in \dom(\Lap)$.
\end{enumerate} 
Then, 
\begin{enumerate}[(i)]
\item $\Pi_B$ has a bounded holomorphic functional calculus, and
\item $\dom(\sqrt{\Div_A}) = \dom(\conn) = \SobH{1}(\cV)$
 with  $\|{\sqrt{\Div_A} u}\|\simeq \norm{\conn u} + \norm{u} = \norm{u}_{\SobH{1}}$
for all $u \in \SobH{1}(\cV)$.
\end{enumerate} 
\end{theorem}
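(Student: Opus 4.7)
The plan is to apply Theorem \ref{Thm:Ass:Main} to the triple $(\Gamma, B_1, B_2)$ constructed above, acting on $\Hil = \Lp{2}(\cV) \oplus \Lp{2}(\cV) \oplus \Lp{2}(\cotanb\cM\tensor\cV)$, to conclude (i); part (ii) will then follow from Corollary \ref{Cor:Ass:Main} together with the block structure of $\Pi_B^2$. The bulk of the work is to verify (H1)--(H8). Most are essentially formal. $\Gamma^2 = 0$ by block inspection, and $\Gamma$ is closed and densely defined because $\conn$ is, giving (H1). Accretivity (H2) reduces directly to (v): on $\ran(\adj{\Gamma}) \subset \Lp{2}(\cV) \oplus 0 \oplus 0$ one has $\inprod{B_1 u, u} = \inprod{a u_0, u_0} \geq \kappa_1 \|u_0\|^2$, and on $\ran(\Gamma) = \{(0, Sw) : w \in \SobH{1}(\cV)\}$ one has $\inprod{B_2 v, v} = \inprod{A S w, S w} \geq \kappa_2 \|w\|_{\SobH{1}}^2 = \kappa_2 \|Sw\|^2$. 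The products $B_1 B_2 = B_2 B_1 = 0$ give (H3) trivially; (H4) uses that a direct sum of GBG bundles is GBG, with $\cotanb\cM \tensor \cV$ inheriting GBG from its factors; (H5) holds by construction; and (H6) follows from $[\Gamma, \eta I]$ being determined by $[\conn, \eta I] = (\conn\eta) \tensor \cdot$.

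The remaining two hypotheses require real computation. For the cancellation (H7), fix $\Q \in \DyQ$ and $u \in \dom(\Gamma)$ with $\spt u \subset \Q$, so $\Gamma u = (0, u_0, \conn u_0)$. The cube integral is expressed in the GBG frame $\{e^i\}$ for $\cV$ on $\parent{\Q}$ paired with manifold coordinates $\{x^k\}$ inducing the GBG frame $\{dx^k\}$. Bounding $|\int_\Q u_0\, d\mu|$ is Cauchy--Schwarz; for $|\int_\Q \conn u_0\, d\mu|$ one integrates by parts the term $\partial_k (u_0)_i \sqrt{\det \mg}\, dx$, using that $\spt u_0 \subset \Q$ kills the boundary contribution, and controls the remaining connection and metric-determinant terms via (iv). The argument for $\adj{\Gamma}$ is analogous, invoking compatibility (iii) and Proposition \ref{Prop:App:Comp}. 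The dyadic Poincaré part of (H8) is supplied by Proposition \ref{Prop:App:DyPoin} applied componentwise on the direct sum, with $\Xi = \conn$ on the combined bundle; its hypotheses follow from (i) and (iv), the latter also controlling the induced connection on $\cotanb\cM \tensor \cV$.

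The main obstacle is the coercivity half of (H8). Here one observes that for $u = (u_0, u_1, u_2) \in \ran(\Pi) \cap \dom(\Pi)$, the facts $\ran(\Gamma) \ni (0, v, \conn v)$ and $\ran(\adj{\Gamma}) \subset \Lp{2}(\cV) \oplus 0 \oplus 0$ force $u_2 = \conn u_1$, and direct computation yields
$$\|\Pi u\|^2 = \|(I + \Lap) u_1\|^2 + \|u_0\|^2 + \|\conn u_0\|^2.$$
Functional calculus of the self-adjoint $I + \Lap$ bounds $\|u_1\|$ and $\|\conn u_1\| = \|u_2\|$ by the first term on the right; hypothesis (vi) bounds $\|\conn u_2\| = \|\conn^2 u_1\|$ in the same way; and the bounds on the $u_0$ terms are trivial. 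Combining, we obtain $\|\Xi u\|^2 + \|u\|^2 \lesssim \|\Pi u\|^2$, as required. It is precisely to make this Bochner-type step available that hypothesis (vi) is imposed.

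Once (H1)--(H8) are established, Theorem \ref{Thm:Ass:Main} yields (i). For (ii), block multiplication shows $\Pi_B^2$ preserves each summand and restricts on $\Lp{2}(\cV) \oplus 0 \oplus 0$ to $\Div_A = a \adj{S} A S$. On that same summand $\Pi_B(u_0, 0, 0) = (0, u_0, \conn u_0) = (0, S u_0)$, which has norm $\simeq \|u_0\|_{\SobH{1}}$. Applying Corollary \ref{Cor:Ass:Main}(ii), which gives $\dom(\Pi_B) = \dom(\sqrt{\Pi_B^2})$ with equivalent norms, one concludes that $\dom(\sqrt{\Div_A}) = \{u_0 : (u_0, 0, 0) \in \dom(\Pi_B)\} = \dom(\conn) = \SobH{1}(\cV)$ and $\|\sqrt{\Div_A}\, u_0\| \simeq \|u_0\|_{\SobH{1}}$, completing the proof.
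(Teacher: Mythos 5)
Your proposal is correct and follows the paper's own route: verify (H1)--(H8), then invoke Theorem~\ref{Thm:Ass:Main} and Corollary~\ref{Cor:Ass:Main}. The key structural observations agree with the paper — (H2) reduces to (v); (H7) comes from the cube integral, integration by parts via Proposition~\ref{Prop:App:Comp} / compatibility (iii), and the bounds in (iv); (H8)-1 comes from Proposition~\ref{Prop:App:DyPoin} applied with $\Xi = \conn$ componentwise; and (H8)-2 uses the identity $\|\Pi u\|^2 = \|(I+\Lap)u_1\|^2 + \|u_0\|^2 + \|\conn u_0\|^2$ and hypothesis (vi). The only cosmetic deviation is in (H7), where you describe the coordinate integration by parts directly (expanding $\divv$ against $\sqrt{\det\mg}$) whereas the paper inserts a smooth cutoff $\psi \equiv 1$ on $\Q$ supported in $B(x_{\parent{\Q}},\brad)$ and invokes Proposition~\ref{Prop:App:Comp} against the test section $\psi\,\mg_{ai}\mh_{bj}\,dx^a\otimes e^b$; these are the same calculation, and both require the compact support of $u$ inside $\Q$ to replace $\int_\Q$ by $\int_\cM$ before integrating by parts — a point worth making explicit, since $\Q$ itself has no usable smooth boundary.
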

\begin{proof}[Proof of Theorem \ref{Thm:App:KatoVB}  (assuming Theorem \ref{Thm:Ass:Main})]
We show that $(\Gamma,B_1,B_2)$ satisfy
(H1)-(H8) of \S\ref{Sect:Ass} in order to invoke Theorem \ref{Thm:Ass:Main}.

Nilpotency of $\Gamma$ is immediate.
That $\Gamma$ is
densely-defined and closed
follows easily from Proposition \ref{Prop:Pre:ConnClosed}.
This settles (H1). Also, (H3)
is an easy calculation and (H4)-(H5)
are immediate. The
fact that (H6) is satisfied
is an immediate consequence of the 
Leibniz property of the connection.
The conditions (i) and (ii)
allow us to invoke Proposition \ref{Prop:App:DyPoin},
thus proving (H8)-1.
It remains to demonstrate (H2), (H7), 
and (H8)-2 hold  with 
$\Xi:\dom(\Xi) \subset \Lp{2}(\cV) \oplus  \Lp{2}(\cV) \oplus \Lp{2}(\cotanb \cM \tensor \cV)
\to \Lp{2}(\cotanb\cM \tensor \cV) \oplus  \Lp{2}(\cotanb\cM \tensor \cV) 
\oplus \Lp{2}(\Tensors[0,2]\cM \tensor \cV)$ defined by
$\Xi(u_1,u_2,u_3) = (\conn u_1, \conn u_2, \conn u_3)$.
The domain of $\Xi$ is $\dom(\Xi) = \SobH{1}(\cV) \oplus \SobH{1}(\cV) 
\oplus \SobH{1}(\cotanb\cM \tensor \cV)$. 

Fix $u \in \ran(\adj{\Gamma})$.
That is, $u = (\adj{S} v, 0)$ for
$v \in \Lp{2}(\cV) \oplus \Lp{2}(\cotanb M \tensor \cV)$.
Thus, 
$$
\re\inprod{B_1 u, u}
	= \re \inprod{a \adj{S}v, \adj{S}v}
	\geq \kappa_1 \norm{\adj{S}v}^2
	= \kappa_1 \norm{u}^2.$$
Next, let $u \in \ran(\Gamma)$.
Thus, $u = (0,Sv)$
for $v \in \Lp{2}(\cV)$.
Therefore, 
$$
\re\inprod{B_2 u, u}
	= \re\inprod{A Sv, Sv}
	\geq \kappa_2 \norm{u}^2$$
which settles (H2).

We verify (H7). 
First, let $u = (u_1, u_2, u_3) \in \dom(\Gamma)$
with $\spt u \subset \Q$
and $v = (v_1,v_2,v_3) \in \dom(\adj{\Gamma})$
with $\spt v \subset \Q$.
Then, $\Gamma u = (0, Su_1) = (0,u_1, \conn u_1)$,
$\adj{\Gamma}v = (\adj{S}(v_2,v_3),0) = (v_2 - \divv v_3,0,0)$
and we have that 
$$
\modulus{\int_{\Q} \Gamma u\ d\mu}
	= \modulus{\int_{\Q} u_1\ d\mu} + \modulus{\int_{\Q} \grad u_1\ d\mu}$$
and
$$
\modulus{\int_{\Q} \adj{\Gamma} v\ d\mu}
	= \modulus{\int_{\Q} {v_2 - \divv v_3}\ d\mu}
	\leq \modulus{\int_{\Q}  v_2\ d\mu} + \modulus{\int_{\Q} \divv v_3\ d\mu}.$$
By Cauchy-Schwartz, 
$$\modulus{\int_{\Q} u_1\ d\mu} 
	\lesssim \mu(\Q)^{\frac{1}{2}} \norm{u_1}
	\leq  \mu(\Q)^{\frac{1}{2}} \norm{u},$$
and by a similar computation,
$$\modulus{\int_{\Q}  v_2\ d\mu} \lesssim \mu(\Q)^{\frac{1}{2}} \norm{v}.$$ 
To conveniently deal with the two remaining estimates, we omit the indices
in $u_1$ and $v_3$ and note that it remains to prove 
$$  \text{(a)} \quad
\modulus{\int_{\Q} \conn u} \lesssim  \mu(\Q)^{\frac{1}{2}} \norm{u}
\quad\text{and}\quad 
\text{(b)} \quad
\modulus{\int_{\Q} \divv v} \lesssim  \mu(\Q)^{\frac{1}{2}} \norm{v}$$
for all $u \in \dom(\conn)$, $v \in \dom(\divv)$ 
with $\spt u,\ \spt v \subset \Q$.

Before continuing, we remark that 
every function $f \in \Lp[loc]{1}(\cV)$ can be written as $f= f_ie^i = \mh(f, \mh_{ki} e^k)e^i$
in $B(x_{\parent{\Q}},\brad)$. Here $\mh_{ij} = \mh(e_i, e_j)$, where  $\{e_i\}$ is 
the dual basis of $\{e^i\}$, and 
we use the same notation $\mh$ to denote the 
induced inner product on $\adj{\cV}$
by requiring that $\mh_{ij}\mh^{jk} = \delta_{i}^k$. We also remark that every function
 $F \in \Lp[loc]{1}(\cotanb \cM \tensor \cV)$ can be written as 
$F = F_{ij}\ dx^i \tensor e^j =  \mg \tensor \mh(F, \mg_{ai}h_{bj}\ dx^a \tensor e^b)\ dx^i \tensor e^j$ in $B(x_{\parent{\Q}},\brad)$ where $\mg_{ij} = \mg(\partial_{i},\partial_j)$ 
on $\tanb\cM$.

Turning to the proof of (a), let $u\in  \SobH{1}(\cV)$ with $\spt u\subset \Q$.
Choose $\psi \in \Ck[c]{\infty}(\cM)$ such that $\spt\psi\subset B(x_{\parent{\Q}},\brad)$ and 
$\psi = 1$ on $Q$, and
extend
$\psi \mg_{ai}\mh_{bj}\ dx^a\tensor e^b$ to be zero
outside of $B(x_{\parent{\Q}},\brad)$. Then, by the above remark with
 $F = \conn u$, we have the following identity on $Q$.
\begin{align*}
\int_{\Q} \conn u
	&= \int_{\Q} \mg \tensor \mh(\conn u, \psi \mg_{ai}\mh_{bj}\ dx^a\tensor e^b)\ d\mu\ dx^i \tensor e^j \\
	&= \int_{\cM} \mg \tensor \mh(\conn u, \psi \mg_{ai}\mh_{bj}\ dx^a\tensor e^b)\ d\mu\ dx^i \tensor e^j \\
	&= \int_{\cM} \mh(u, -\tr\conn (\psi \mg_{ai}\mh_{bj}\ dx^a\tensor e^b))\ d\mu\ dx^i \tensor e^j  \\
	&= \int_{\Q} \mh(u, -\tr \conn(\mg_{ai}\mh_{bj}\ dx^a\tensor e^b))\ d\mu\ dx^i \tensor e^j \\
	&= \int_{\Q} -\mh(u, \mg_{ai}\mh_{bj}\tr \conn (dx^a \tensor e^b) 
		+ \tr ((\conn \mg_{ai}\mh_{bj}) \tensor dx^a \tensor e^b))\ d\mu\ dx^i \tensor e^j\ .
\end{align*} 
We have used Proposition \ref{Prop:App:Comp} (since $\mg_{ai}\mh_{bj}\ dx^a \tensor e^b$
are smooth), the product rule for $\conn$, and the linearity of $\tr$.  We note that by an easy calculation, we have
 $\modulus{\tr(X)} \lesssim  \modulus{X}$ 
for all $x \in \cM$ whenever $X \in \Ck{\infty}(\cotanb\cM \tensor\cotanb\cM \tensor \cV)$.  
Furthermore,
the bound on the metric in each GBG chart implies bounds on
$\modulus{\mh_{ai}}$ and $\modulus{\mg_{ai}}$,
and the bounds in (iv) imply bounds on $\modulus{\partial_k \mh_{ai}}$
and $\modulus{\partial_k \mg_{bj}}$.
Since we assumed the connection to be Levi-Cevita,
we can write $\conn dx^a$ purely in terms of the Christoffel
symbols, which in turn can be written in terms of 
$\mg_{ij}$, $\mg^{ij}$ and $\partial_k \mg_{ij}$.
Also, $\modulus{e^b}$ and $\modulus{dx^a}$ are bounded
by the GBG hypothesis, $\modulus{\conn e^b}$
by (iv), and so we conclude that
$$\modulus{\mg_{ai}\mh_{bj}\tr \conn (dx^a \tensor e^b) 
		+ \tr ((\conn \mg_{ai}\mh_{bj}) \tensor dx^a \tensor e^b)} \lesssim 1.$$
On combining these estimates, and applying the Cauchy-Schwartz inequality,
we conclude that
$$\modulus{\int_{\Q} \conn u} \lesssim \mu(\Q)^{\frac{1}{2}} \norm{u}$$  as required.

To verify (b),  let $v \in \dom(\divv)$ with $\spt v \subset Q$, and apply the above remark with 
$f = \divv v$ to obtain by a similar argument that
$$
\int_{\Q} \divv v 
	= \int_{\Q} \mh(\divv v, \mh_{ki}\ e^k)\ d\mu\ e^i
	= \int_{\Q} \mg \tensor \mh(v, \conn(\mh_{ki}\ e^k))\ d\mu\ e^i.$$
Reasoning as before, we have that 
$\modulus{\conn(\mh_{ki}\ e^k)}$ is bounded and by the 
Cauchy-Schwartz inequality, we conclude that 
$$\modulus{\int_{\Q} \divv v} \lesssim  \mu(\Q)^\frac{1}{2} \norm{v}$$ as required.
This completes the proof of (H7).

To show (H8) let $\Xi(u_1,u_2,u_3) = (\conn u_1, \conn u_2, \conn u_3)$.
Upon noting that 
$$\modulus{\conn (dx^i \tensor e^j)} 
	\leq \modulus{\conn dx^i} \modulus{e^j} + \modulus{\conn e^j} \modulus{dx^i} \lesssim 1,$$
we apply Proposition \ref{Prop:App:DyPoin} which proves (H8)-1.

It remains to show (H8)-2.
Fix $v = (v_1,v_2,v_3) \in \ran(\Pi) \intersect \dom(\Pi)$.
Thus, there is $u = (u_1, u_2, u_3) \in \dom(\Pi)$ such that
$v = \Pi u = (u_2 - \divv u_3, u_1, \conn u_1)$.
A calculation the shows that
$$ 
\norm{v}^2 + \norm{\Xi v}^2
	= \norm{u_1}^2 + 2 \norm{\conn u_1}^2 + \norm{\conn^2 u_1}
		+ \cbrac{ \norm{v_1}^2 + \norm{\conn v_1}^2}.$$
Also,
$$
\norm{\Pi v}^2 = \norm{(I + \Lap) u_1}^2 + \norm{v_1}^2 + \norm{\conn v_1}^2.$$
But note that
$$
\norm{(I + \Lap)u_1}^2 
	= \inprod{ (I + \Lap)u_1, (I + \Lap)u_1}
	= \norm{u_1}^2 + 2 \norm{\conn u_1}^2 + \norm{\Lap u_1}^2.$$
Combining these estimates with (iv), we have that
$\norm{v}^2 + \norm{\Xi v}^2 \lesssim \norm{\Pi v}$
which proves (H8)-2.

Now, by invoking Theorem \ref{Thm:Ass:Main},
we conclude that $\Pi_B$ has a bounded holomorphic 
functional calculus.
By its Corollary \ref{Cor:Ass:Main}, 
we obtain that $\norm{\sqrt{{\Pi_B}^2}v} \simeq \norm{\Pi_B v}$
for $v \in \dom(\Pi_B) = \dom(\sqrt{{\Pi_B}^2})$.
Fix $u \in \SobH{1}(\cV)$. Then $v = (u,0,0) \in \dom(\Pi_B)$
and
$$ \norm{\sqrt{\Div_{A}}u} = \norm{\sqrt{{\Pi_B}^2}v}
	\simeq \norm{\Pi_B v} = \norm{v}_{\SobH{1}}$$
which finishes the proof.
\end{proof} 

\begin{remark} 
Instead of taking $\conn$ to be a connection,
we could have
considered a \emph{sub-connection},
by which we  mean
a map $\conn: \Ck{\infty}(\cV) \to \Ck{\infty}(\cotanb\cM \tensor \cV)$
that is function linear
and satisfies the Leibniz property on 
a sub-bundle $\cE$ of $\cotanb\cM$, and 
vanishes outside of $\cE$. 
This is related to the study of square roots
of elliptic operators
associated with sub-Laplacians
on Lie groups \cite{BEMc}.
However, it is not clear to us whether
a sub-connection is automatically
densely-defined and closable.
\end{remark}

\subsection{Manifolds with injectivity and Ricci bounds}

In this section, we apply Theorem \ref{Thm:App:KatoVB} to establish 
the Kato square root estimates for manifolds
which have injectivity radius bounds
and Ricci bounds.
Our approach is to show that under
these conditions, there exist \emph{harmonic}
coordinates which gives us bounds
on the metric and its derivatives.
We then use these coordinates to show that 
the bundle of $(p,q)$ tensors satisfy the GBG criterion.
 
We first present the following
theorem which is really contained
in the observation 
following Theorem 1.2 in \cite{Hebey}.

\begin{theorem}[Existence of global harmonic coordinates]
Suppose there exist $\kappa, \eta > 0$
such that $\inj(M) \geq \kappa$ and
$\modulus{\Ric} \leq \eta$.
Then, for any $A > 1$ and $\alpha \in (0,1)$,
there exists $r_H = r_H(n,A,\alpha,\kappa,\eta) > 0$
such that $B(x,r_H)$ corresponds to a coordinate
system satisfying: 
\begin{enumerate}[(i)]
\item $A^{-1} \ddelta_{ij} \leq \mg_{ij} \leq A \ddelta_{ij}$
	as bilinear forms and,
\item $\begin{aligned}[t]\sum_{l} r_H
		\sup_{y \in B(x,r_H)} \modulus{\partial_{l} \mg_{ij}(y)}
		+ \sum_{l} r_H^{1 + \alpha}
		 \sup_{y \neq z} 
		\frac{\modulus{\partial_l \mg_{ij}(z) - \partial_l \mg_{ij}(y)}}
		{d(y,z)^\alpha} \leq A - 1.
	\end{aligned}$
\end{enumerate}
\end{theorem}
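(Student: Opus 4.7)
The plan is to construct the claimed coordinates by solving a Dirichlet problem for the Laplace--Beltrami operator at each point and then converting the harmonic coordinate identity into elliptic estimates on the metric. This is the classical Jost--Karcher/Anderson construction, refined by Hebey--Herzlich; indeed the statement as phrased above is precisely the observation following Theorem~1.2 in \cite{Hebey}, and the proof below is modelled on their treatment.

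First I would fix $x \in \cM$ and, using $\inj(\cM) \geq \kappa$, pass to geodesic normal coordinates $(y^1,\dots,y^n)$ on $B(x,\kappa/2)$. Standard Rauch/Bishop--Gromov comparison under $\modulus{\Ric}\leq \eta$ provides $C^{0}$ and preliminary $C^{0,\alpha}$ control of the components $\mg_{ij}$ in these coordinates on a smaller ball, with constants depending only on $n,\kappa,\eta,\alpha$. Next, on a small ball $B(x,r_0)$, I would solve the Dirichlet problem $\Lap u^i = 0$ with boundary data $u^i\vert_{\partial B(x,r_0)} = y^i\vert_{\partial B(x,r_0)}$ for $i=1,\dots,n$. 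In the $y$-coordinates this is a uniformly elliptic scalar equation with Hölder coefficients, so standard existence and Schauder theory apply; the estimates show that $u^i - y^i$ is small in $C^{1,\alpha}$ once $r_0$ is chosen small enough, so that $(u^1,\dots,u^n)$ is a $C^{1,\alpha}$-diffeomorphism of some $B(x,r_H)$ onto its image.

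The main step is then to derive (i) and (ii) by exploiting the harmonic-coordinate identity $\mg^{jk}\Christ{jk}{i}=0$. Expressing $\Christ{jk}{i}$ via the metric, this identity rewrites as a quasilinear elliptic system for the metric components $\mg_{ij}$ in the new coordinates of the schematic form
\[
\Lap_{\mg}\mg_{ij} = Q_{ij}(\mg,\partial \mg) + 2\Ric_{ij},
\]
where $Q_{ij}$ is polynomial in $\mg$, $\mg^{-1}$ and quadratic in $\partial \mg$. Under $\modulus{\Ric}\leq \eta$ and the preliminary $C^{0,\alpha}$ control of $\mg$, Schauder estimates applied coordinate-wise promote this identity to a $C^{1,\alpha}$ bound on $\mg$ in the harmonic coordinates, with constants depending only on $n,\kappa,\eta,\alpha$. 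Shrinking $r_H$ once more, we can force both the pointwise closeness $A^{-1}\ddelta_{ij}\leq \mg_{ij}\leq A\ddelta_{ij}$ required in (i) and the smallness of the weighted $C^{1,\alpha}$ seminorm required in (ii).

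The hard part will be obtaining the uniformity of $r_H$ over $\cM$: a priori the harmonic radius could degenerate at infinity. The standard remedy is a compactness/contradiction argument in the $C^{1,\alpha}$ pointed Cheeger--Gromov topology. If $r_H(x_k)\to 0$ along some sequence of base points, rescaling by $\lambda_k = r_H(x_k)^{-1}$ yields pointed manifolds $(\cM,\lambda_k^2\mg,x_k)$ whose Ricci tensors tend to zero and whose injectivity radii tend to infinity; passing to a $C^{1,\alpha}$ subsequential limit produces a pointed $C^{1,\alpha}$ Riemannian manifold on which the harmonic radius must be both infinite (by the limiting Ricci-flat, large-injectivity geometry) and equal to $1$ (by construction), a contradiction. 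The uniform bounds $\modulus{\Ric}\leq \eta$ and $\inj(\cM)\geq \kappa$ are exactly what is needed for the compactness step to apply.
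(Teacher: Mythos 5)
The paper does not prove this statement at all: it is quoted as a known result, with the justification being the remark immediately following Theorem~1.2 in \cite{Hebey} (which in turn rests on the Anderson--Jost--Karcher harmonic radius theory). So there is no in-paper proof for your proposal to be compared against. Your outline does accurately reproduce the structure of the argument in the literature that the paper is implicitly invoking: construct harmonic coordinates by solving the Dirichlet problem with geodesic boundary data, read off the elliptic system $\mg^{ab}\partial_a\partial_b\,\mg_{ij} = Q_{ij}(\mg,\partial\mg) - 2\Ric_{ij}$ satisfied by the metric in harmonic coordinates, apply Schauder estimates, and then obtain uniformity of $r_H$ over noncompact $\cM$ by a blow-up/compactness contradiction in the pointed $C^{1,\alpha}$ topology.

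One caveat on your compactness step, since it is the genuinely delicate part: in the limit you only have a $C^{1,\alpha}$ metric a priori, so ``the limiting Ricci-flat, large-injectivity geometry'' is not immediately available. The standard resolution (Anderson) is to first upgrade the convergence to $W^{2,p}$ for all $p$ using the elliptic system and the $\Lp{\infty}$ Ricci bound, identify the weak Ricci tensor of the limit as zero, bootstrap to smoothness, and then show the limit is flat $\R^n$ (for instance via its Euclidean volume growth, which follows from the rescaled injectivity radius tending to infinity together with Bishop--Gromov). Only then does lower semicontinuity of the harmonic radius under $C^{1,\alpha}$ convergence produce the contradiction with $r_H \equiv 1$ along the sequence. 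Your sketch compresses this into a single sentence; if you were to write this out as a full proof you would need to supply those intermediate regularity and rigidity steps. As a reconstruction of the cited argument, however, the proposal is sound and follows the same route as the source the paper defers to.
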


This immediately gives us the existence of GBG coordinates for tensor fields.

\begin{corollary}[Existence of GBG coordinates for finite rank tensors]
\label{Cor:App:GBGExt} Under the assumptions that $\inj(M) \geq \kappa>0$ and
$\modulus{\Ric} \leq \eta$, 
there exist GBG coordinates
for $\Tensors[p,q] \cM$.
Furthermore, in each basis $\set{e^i}$ in each GBG
coordinate system for $\Tensors[p,q]\cM$,
we have that $\modulus{\conn e^i} \leq C_{p,q}.$
\end{corollary}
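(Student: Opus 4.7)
The plan is to construct the GBG trivialisations directly from harmonic coordinates and then verify both the metric and connection bounds by explicit computation in this basis.

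First, I fix once and for all $A > 1$ (say $A = 2$) and some $\alpha \in (0,1)$, and invoke the Existence of Global Harmonic Coordinates theorem to produce a uniform radius $r_H = r_H(n,A,\alpha,\kappa,\eta) > 0$ such that about every $x \in \cM$ there is a harmonic coordinate chart on $B(x,r_H)$ satisfying (i) and (ii) of that theorem. Shrinking if necessary, I set the GBG radius to be $\brad = \min\{r_H, 5\}$. Given the coordinate frame $\{\partial_i\}$ and coframe $\{dx^i\}$ on $B(x,\brad)$, I define the GBG frame for $\Tensors[p,q]\cM$ to be the canonical tensor product basis $e^I = \partial_{i_1} \tensor \cdots \tensor \partial_{i_p} \tensor dx^{j_1} \tensor \cdots \tensor dx^{j_q}$, indexed by multi-indices $I = (i_1,\ldots,i_p;j_1,\ldots,j_q)$. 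The associated trivialisation $\psi_x: B(x,\brad) \times \C^{n^{p+q}} \to \uppi^{-1}(B(x,\brad))$ sends the standard basis of $\C^{n^{p+q}}$ to $\{e^I\}$.

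Next, for the metric bound, note that the harmonic coordinate estimate (i) gives $A^{-1}\delta_{ij} \leq \mg_{ij} \leq A\delta_{ij}$ as bilinear forms, and by standard matrix inversion the dual metric satisfies $A^{-1}\delta^{ij} \leq \mg^{ij} \leq A\delta^{ij}$. The induced metric $\mh$ on $\Tensors[p,q]\cM$ is built by tensoring $p$ copies of $\mg$ with $q$ copies of $\mg^{-1}$ and evaluating on the basis $\{e^I\}$; the resulting Gram matrix is a tensor product of Gram matrices each of which is comparable to the identity with constant $A^{\pm 1}$. Hence in the $\{e^I\}$ basis,
\begin{equation*}
A^{-(p+q)} I \leq \mh \leq A^{p+q} I,
\end{equation*}
which verifies the GBG criterion with constant $C = A^{p+q}$, independent of $x$.

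For the connection bound, I apply the Leibniz rule to expand $\conn e^I$ as a sum of $p+q$ terms, each of which replaces exactly one tensor factor by its covariant derivative while keeping the others intact. Using $\conn \partial_i = \Christ{ki}{l}\, dx^k \tensor \partial_l$ and $\conn dx^j = -\Christ{kl}{j}\, dx^k \tensor dx^l$, everything reduces to controlling the Christoffel symbols $\Christ{ij}{k} = \tfrac{1}{2}\mg^{kl}(\partial_i \mg_{jl} + \partial_j \mg_{il} - \partial_l \mg_{ij})$ in the harmonic chart. But (i) controls $\mg^{ij}$ uniformly, while (ii) controls $\partial_l \mg_{ij}$ uniformly (both by constants depending only on $A$, $n$ and $\brad$). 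Combining these with the metric bound just established gives $\modulus{\conn e^I}_{\mh} \leq C_{p,q}$ with $C_{p,q}$ independent of $x$, as required.

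The only mildly delicate point is bookkeeping: one must keep track of how the fibrewise metric on $\Tensors[p,q]\cM$ is used to measure $\conn e^I$, since the pointwise norm is built from the same $\mg,\mg^{-1}$ which govern the Christoffel symbols. Apart from this, the argument is essentially a mechanical transfer of the harmonic coordinate estimates to the tensor bundle via the Leibniz rule.
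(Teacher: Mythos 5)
Your proof is correct and follows essentially the same route as the paper: invoke the harmonic-coordinate theorem with a fixed choice of $A$ and $\alpha$, build the GBG frame for $\Tensors[p,q]\cM$ from the coordinate frame and coframe, transfer the metric bound by taking tensor products, and control $\conn e^I$ via Christoffel symbols expressed through $\mg^{ij}$ and $\partial_l\mg_{ij}$. A minor bonus: your exponent $A^{\pm(p+q)}$ for the tensor-bundle metric comparability is the correct count (one factor of $A$ per tensor slot), whereas the paper's $2^{\pm pq}$ appears to be a typo for $2^{\pm(p+q)}$.
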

\begin{proof}
First,  we note that the Ricci bounds imply that
there exists  $\eta \in\R$ such that 
that $\Ric \geq \eta \mg$.
Thus, as in the  proof of Theorem 1.1 in  \cite{Morris3},
we conclude that $\cM$ satisfies (\ref{Def:Pre:Eloc}).

Fix $A = 2$ and $\alpha = \frac{1}{2}$.
The previous theorem guarantees the existence
of harmonic coordinates 
for these choices.
Thus, this yields GBG coordinates for $\tanb \cM$ 
with $2^{-1} \leq \mg \leq 2$.

It is an easy calculation to show that $G \simeq I$
implies $G^{-1} \simeq I$ with the same
constants for a positive definite
matrix $G$. Thus, we obtain GBG coordinates
for $\cotanb \cM$ with $2^{-1} \leq \mg \leq 2$
where we denote the metric on $\cotanb\cM$ 
also by $\mg$.

Next, if two inner products
$u,v$ on vector spaces $U,V$ satisfy
$C_1^{-1} \leq u \leq C_1$ and
$C_2^{-1} \leq v \leq C_2$, then
$(C_1 C_2)^{-1} \leq u \tensor v \leq C_1 C_2$
on $U \tensor V$. Thus, by induction,
we have that 
$2^{-pq} \leq \mg \leq 2^{pq}$
for the metric $\mg$ on $\Tensors[p,q] \cM$.

For the gradient bounds,
first consider $\tanb \cM = \Tensors[1,0]\cM$.
Since we assume our connection is Levi-Civita,
we can write the Christoffel symbols 
$\Christ{ij}{k}$ purely 
in terms of $\partial_{a} \mg_{bc}$
and $\mg^{ab}$.
The Cauchy-Schwartz
inequality allows us to bound the
$\mg^{ab}$ and the bounds on $\partial_{a} \mg_{bc}$
comes from the theorem.
Thus, $\modulus{\Christ{ij}{k}} \leq C$
and so $\modulus{\conn e_i} \leq C$.
An inductive argument then yields
the result for $\Tensors[p,q]\cM$
with the constant dependent on $p$ and $q$.
\end{proof}

With this result, we can apply the general
Theorem \ref{Thm:App:KatoVB} to obtain
the following solution to the Kato square root problem
on finite rank tensors.

\begin{theorem}[Kato square root problem on finite rank tensors]
\label{Thm:App:KatoTen}
 Suppose that 
$\modulus{\Ric} \leq C$, 
$\inj(M) \geq \kappa > 0$,
and
the following ellipticity
condition holds: 
there exist $\kappa_1, \kappa_2 > 0$ such that
$$
\re\inprod{a u, u}  \geq \kappa_1 \norm{u}^2
\qquad\text{and}\qquad 
\re\inprod{A Sv, Sv} \geq \kappa_2 \norm{v}_{\SobH{1}}^2$$
for all $u \in \Lp{2}(\Tensors[p,q]\cM)$
and $v \in \SobH{1}(\Tensors[p,q]\cM)$.
Suppose further that $\dom(\Lap) \subset \SobH{2}(\cV)$ and 
that there exists $C' > 0$
such that
\begin{equation*}\label{Riesz} \norm{\conn^2 u} \leq C' \norm{(I + \Lap)u} \tag{R}
\end{equation*}
whenever $u \in \dom(\Lap)$.
Then, 
$\dom(\sqrt{\Div_A}) = \dom(\conn) = \SobH{1}(\Tensors[p,q]\cM)$
and $\|{\sqrt{\Div_A} u}\|\simeq \norm{\conn u} + \norm{u} = \norm{u}_{\SobH{1}}$
for all $u \in \SobH{1}(\Tensors[p,q]\cM)$.
\end{theorem}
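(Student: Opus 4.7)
The plan is to derive this result by verifying that the hypotheses (i)--(vi) of Theorem \ref{Thm:App:KatoVB} are all satisfied with $\cV = \Tensors[p,q]\cM$, and then invoking that theorem directly. Since the ellipticity condition (v) and the Riesz-type bound (vi) are already assumed, the real task is to convert the curvature and injectivity bounds into the geometric conditions (i)--(iv).

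First I would record that the hypothesis $\modulus{\Ric}\le C$ yields a lower Ricci bound, so Bishop--Gromov comparison gives (\ref{Def:Pre:Eloc}) (as already noted in the proof of Corollary \ref{Cor:App:GBGExt}), and Buser's inequality gives the local Poincar\'e inequality (\ref{Def:Pre:Ploc}). This settles hypothesis (i) of Theorem \ref{Thm:App:KatoVB}. For (ii)--(iv), I would invoke Corollary \ref{Cor:App:GBGExt} to obtain GBG coordinates for $\Tensors[p,q]\cM$; since these are constructed from the harmonic charts on $\cM$ provided by the existence theorem for harmonic coordinates, the frames on $\cotanb\cM$ are genuine coordinate frames $\set{dx^i}$, so (ii) holds by construction. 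The compatibility condition (iii) is automatic: the Levi-Civita connection is metric compatible on $\tanb\cM$, and metric compatibility is preserved under tensor products and duals, so the induced connection on $\Tensors[p,q]\cM$ is compatible with the induced metric $\mh$.

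For (iv) the bound $\modulus{\conn e^j}\le C$ is supplied by Corollary \ref{Cor:App:GBGExt}. The bounds on $\modulus{\partial_k \mg^{ij}}$ follow from the harmonic coordinate estimates on $\partial_k \mg_{ij}$ together with the two-sided bound $A^{-1}\ddelta\le \mg\le A\ddelta$: Cramer's formula for the inverse matrix expresses $\mg^{ij}$ as a polynomial in the $\mg_{kl}$ divided by $\det \mg$, and differentiating yields the desired pointwise bound. The bounds on $\modulus{\partial_k \mh^{ij}}$ reduce to the same estimates because on $\Tensors[p,q]\cM$ the induced metric $\mh$ is a polynomial in $\mg_{ij}$ and $\mg^{kl}$.

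With (i)--(vi) verified, Theorem \ref{Thm:App:KatoVB} applies directly and gives the Kato square root estimate together with $\dom(\sqrt{\Div_A})=\dom(\conn)=\SobH{1}(\Tensors[p,q]\cM)$. I expect the main obstacle to be purely algebraic bookkeeping rather than conceptual: carefully unpacking how the harmonic-coordinate derivative bounds on $\mg_{ij}$ propagate through the construction of the induced tensor metric $\mh$ on $\Tensors[p,q]\cM$ and its inverse. Once one fixes notation for the induced frames $\set{dx^{i_1}\tensor\cdots\tensor dx^{i_p}\tensor \partial_{j_1}\tensor\cdots\tensor \partial_{j_q}}$ the verification is routine, but the indexing is the only place where something could plausibly go wrong.
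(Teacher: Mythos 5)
Your proposal is correct and takes essentially the same route as the paper: verify conditions (i)--(iv) of Theorem \ref{Thm:App:KatoVB} from the Ricci and injectivity bounds (with (v)--(vi) hypothesised) and then invoke that theorem. You spell out a few steps the paper leaves implicit --- in particular, converting the harmonic-coordinate bounds on $\partial_k\mg_{ij}$ into bounds on $\partial_k\mg^{ij}$ and $\partial_k\mh^{ij}$ --- whereas the paper simply cites Corollary \ref{Cor:App:GBGExt} for (ii) and (iv) and Morris's argument for (i); this extra detail is accurate and harmless.
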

\begin{proof}
We apply Theorem \ref{Thm:App:KatoVB}.
The Ricci bounds imply that 
there exists  $\eta \in\R$ such that 
that $\Ric \geq \eta \mg$.
This shows condition (i) 
as in the proof of  Theorem 1.1 in  \cite{Morris3}. Conditions (ii) and (iv) are
a consequence of
Corollary \ref{Cor:App:GBGExt},
and (iii) holds because the connection is Levi-Cevita. 
\end{proof} 

The Riesz transform condition \eqref{Riesz} is satisfied automatically
for $(0,0)$ tensors, or in other words, for scalar-valued functions, 
as we now show.

\begin{proof}[Proof of Theorem \ref{Thm:Int:KatoFn}]
Recall the Wietzenb\"ock-Bochner
identity
$$
\inprod{\Lap(\conn f), \conn f}
	= \frac{1}{2} \Lap(\modulus{\conn f}^2) + \modulus{\conn^2 f}^2 + \Ric(\conn f, \conn f)
$$
for all $f \in \Ck{\infty}(\cM)$.
When $f \in \Ck[c]{\infty}(\cM)$, we get
$\norm{\conn^2 f} \lesssim \norm{(I + \Lap)f}$
by integrating this identity.

The fact that $\dom(\Lap) \subset \SobH{2}(\cM)$
follows from Proposition 3.3 in \cite{Hebey}
as does the density of $\Ck[c]{\infty}(\cM)$ in $\dom(\Lap)$
Thus, $\norm{\conn^2 u} \lesssim \norm{(I + \Lap)u}$
for $u \in \dom(\Lap)$. Hence the hypotheses of Theorem \ref{Thm:App:KatoTen} hold, so Theorem \ref{Thm:Int:KatoFn} follows as a consequence. 
\end{proof}
\section{Lipschitz Estimates and Stability}

In this short section, we demonstrate
Lipschitz estimates for the functional calculus 
and the stability of the square root.

Let $\tilde{a}$ and $\tilde{A}$
satisfy the same conditions as specified for $a$ and $A$ 
prior to Theorem \ref{Thm:App:KatoVB}, and set
$$\tilde{B}_1 = \begin{pmatrix} \tilde{a} & 0 \\ 0 & 0 \end{pmatrix}
\quad\text{and}\quad
\tilde{B}_2 = \begin{pmatrix} 0 & 0 \\ 0 & \tilde{A} \end{pmatrix}.$$

On noting that $\tilde{B}_i$ satisfy conditions 
(i)-(iii) of Corollary \ref{Cor:Res:Stab}, we have the
following Lipschitz estimate.

\begin{theorem}[Lipschitz estimate]
\label{Thm:App:LipVB}
Assume the hypotheses of 
Theorem \ref{Thm:App:KatoVB}
and fix
$\eta_i < \kappa_i$. Suppose that
$\tilde{B_i}$ satisfy $\|{\tilde{B}_i\|}_\infty \leq \eta_i$
for $i = 1,2$
and set $0 < \hat{\omega}_i < \frac{\pi}{2}$ by 
$\cos\hat{\omega}_i = \frac{ \kappa_i - \eta_i}{\norm{B_i}_\infty + \eta_i}$
and $\hat{\omega} = \frac{1}{2}(\hat{\omega}_1 + \hat{\omega}_2)$.
Then, for all $\hat{\omega} < \mu < \frac{\pi}{2}$,
$$\norm{f(\Pi_B) - f(\Pi_{B+\tilde{B}})}
	\lesssim (\|\tilde{B}_1\|_\infty + \|\tilde{B}_2\|_\infty)
	\norm{f}_\infty$$
for all $f \in \Hol^\infty(S_{\mu}^o)$, and
$$\int_{0}^\infty \norm{\psi(t\Pi_{B})v - \psi(t\Pi_{B + \tilde{B}})v}^2 \frac{dt}{t}
	\lesssim (\|\tilde{B}_1\|_\infty^2 + \|\tilde{B}_2\|_\infty^2)
	\norm{v},$$
for all $\psi \in \Psi(S_{\mu}^o)$ and all $v \in \Hil$. 
The implicit constants depend in particular on on $B_i$ and $\eta_i$.
\end{theorem}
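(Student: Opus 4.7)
The plan is to deduce this theorem as a direct application of Corollary \ref{Cor:Res:Stab} to the operators $(\Gamma, B_1, B_2)$ constructed in \S\ref{Sect:App}, with the role of the perturbation $A_i$ played by $\tilde{B}_i$. Since Theorem \ref{Thm:App:KatoVB} already supplies the hypotheses (H1)--(H8) together with the bounded holomorphic functional calculus for $\Pi_B$, the only task is to verify the three conditions listed in Corollary \ref{Cor:Res:Stab}.

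Condition (i), namely $\|\tilde{B}_i\|_\infty \leq \eta_i$, is part of the assumption. For conditions (ii) and (iii) I would exploit the block structure: both $B_1$ and $\tilde{B}_1$ are supported on the first factor $\Lp{2}(\cV)$ of $\Hil = \Lp{2}(\cV) \oplus [\Lp{2}(\cV) \oplus \Lp{2}(\cotanb\cM \tensor \cV)]$, while $B_2$ and $\tilde{B}_2$ are supported on the second factor. Consequently each of the products $\tilde{B}_1 \tilde{B}_2$, $B_1 \tilde{B}_2$, $\tilde{B}_1 B_2$ (and their analogues with the factors reversed) is identically zero on $\Hil$. Therefore the ranges appearing in (ii) and (iii) are all contained in $\{0\}$, which lies trivially inside $\nul(\Gamma)$ and $\nul(\adj{\Gamma})$, respectively.

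Given these verifications, the two inequalities of the theorem follow immediately from the corresponding parts (i) and (ii) of Corollary \ref{Cor:Res:Stab}, applied with $A_i = \tilde{B}_i$ and with the same choice of $\eta_i$ and angle $\hat{\omega}$ as in the corollary's statement. The dependence of the implicit constants on $B_i$ and $\eta_i$ is inherited directly from the corollary. In particular, no new harmonic-analytic estimate is required at this stage.

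The only mildly delicate point will be pinning down that the angle condition $\hat{\omega} < \mu < \frac{\pi}{2}$ in the corollary is the same condition appearing here, and that the perturbed family $B + \zeta \tilde{B}$ satisfies (H1)--(H3) uniformly in $\zeta$ in a neighbourhood of $[0,1]$ (which is what implicitly underlies the holomorphic dependence argument behind Corollary \ref{Cor:Res:Stab}). Since the accretivity in (H2) is preserved by the smallness $\eta_i < \kappa_i$, and (H3) is preserved by the vanishing of the relevant cross-products just noted, this uniformity presents no obstruction. Thus there is no genuinely hard step; the work has already been done in Corollary \ref{Cor:Res:Stab}, and the present theorem is its specialisation to the geometric setup of Theorem \ref{Thm:App:KatoVB}.
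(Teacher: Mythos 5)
Your proof is correct and follows the paper's own approach exactly: the paper's (implicit) argument is precisely the remark ``On noting that $\tilde{B}_i$ satisfy conditions (i)-(iii) of Corollary \ref{Cor:Res:Stab}, we have the following Lipschitz estimate,'' and your block-structure computation supplying why all the relevant cross-products $\tilde B_1\tilde B_2$, $B_1\tilde B_2$, $\tilde B_1 B_2$ (and the transposed ones) vanish fills in the detail the paper leaves to the reader.
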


We use the coefficients $\tilde{a}$ and $\tilde{A}$
to perturb the coefficients $a$ and $A$. 
Then, we construct the following perturbed operator $\Div_{A + \tilde{A}}$
defined similar to $\Div_{A}$ given by
$\Div_{A + \tilde{A}} u = (a + \tilde{a})\adj{S}(A + \tilde{A})S u$
for $u \in \dom(\Div_{A + \tilde{A}})$.

\begin{theorem}[Stability of the square root]
\label{Thm:App:StabSq}
Assume the hypotheses of Theorem \ref{Thm:App:KatoVB} 
and fix 
$\eta_i < \kappa_i$. If  $\norm{\tilde{a}}_\infty \leq \eta_1$,
$\|\tilde{A}\|_\infty \leq \eta_2$,  then
$$\norm{\sqrt{{\Div_A}}\,u - \sqrt{{\Div_{A + \tilde{A}}}}\,u}
	\lesssim (\norm{\tilde{a}}_\infty + \|\tilde{A}\|_\infty)
	\norm{u}_{\SobH{1}}$$
for all $u \in \SobH{1}(\cV)$. 
The implicit constant depends in particular on $A, a$ and $\eta_i$.
\end{theorem}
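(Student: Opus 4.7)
The plan is to reduce the stability of $\sqrt{\Div_A}$ to the Lipschitz estimate for the bounded holomorphic functional calculus of $\Pi_B$ provided by Theorem \ref{Thm:App:LipVB}, via the identity $\sqrt{{\Pi_B}^2}\, w = \sgn(\Pi_B) \Pi_B w$ on $\dom(\Pi_B)$ that was already exploited in Corollary \ref{Cor:Ass:Main}. Since $\sgn \in \Hol^\infty(S_\mu^o)$ with $\norm{\sgn}_\infty = 1$, one expects Theorem \ref{Thm:App:LipVB} applied to $f = \sgn$ to produce the desired bound.

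For $u \in \SobH{1}(\cV)$, I would first set $v = (u, 0, 0) \in \dom(\Pi_B)$. A direct block-matrix calculation gives $\Pi_B v = \Gamma v = (0, u, \conn u)$, because $B_2 v = 0$ forces $\adj{\Gamma}_B v = B_1 \adj{\Gamma} B_2 v = 0$; crucially this value is independent of $B$, so we also have $\Pi_{B+\tilde{B}} v = (0, u, \conn u)$. One then checks that $\Pi_B^2$ is block-diagonal with $\Div_A$ as the top-left block on $\Lp{2}(\cV)$, so that the subspace $W = \{(w, 0, 0) : w \in \Lp{2}(\cV)\}$ is reducing for $\Pi_B^2$ (this is essentially observed already in the proof of Theorem \ref{Thm:App:KatoVB}, where the norm identity $\norm{\sqrt{{\Pi_B}^2}\, v} = \norm{\sqrt{\Div_A}\, u}$ is used). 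Since the resolvents $(\zeta I - \Pi_B^2)^{-1}$ inherit this block structure, so does any operator defined from them by the Riesz-Dunford calculus; in particular $\sqrt{{\Pi_B}^2}\, v = (\sqrt{\Div_A}\, u, 0, 0)$, with the analogous identity for $B + \tilde{B}$.

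Combining these two observations,
$$(\sqrt{\Div_A}\, u - \sqrt{\Div_{A + \tilde{A}}}\, u,\ 0,\ 0) = \bigl(\sgn(\Pi_B) - \sgn(\Pi_{B + \tilde{B}})\bigr)(0, u, \conn u),$$
and Theorem \ref{Thm:App:LipVB} bounds the norm of the right-hand side by a constant times $(\norm{\tilde{B}_1}_\infty + \norm{\tilde{B}_2}_\infty)\, \norm{(0, u, \conn u)}$, which is in turn controlled by $(\norm{\tilde{a}}_\infty + \norm{\tilde{A}}_\infty)\, \norm{u}_{\SobH{1}}$; the dependence of the implicit constant on $a, A, \eta_i$ is inherited from Theorem \ref{Thm:App:LipVB}. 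The only delicate point is the identification $\sqrt{{\Pi_B}^2}(u, 0, 0) = (\sqrt{\Div_A}\, u, 0, 0)$, which requires verifying that $W$ is reducing for the functional calculus; once this block-structure fact is in hand the rest is essentially a one-line application of the Lipschitz estimate, so I do not anticipate any genuine obstacles beyond this verification.
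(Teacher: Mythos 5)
Your proposal is correct and follows essentially the same route as the paper: both apply the Lipschitz estimate of Theorem~\ref{Thm:App:LipVB} to $f=\sgn$ at the vector $v=\Pi_B(u,0,0)=(0,u,\conn u)$, exploit that this vector is independent of $B$, and use the block identity $\sqrt{\Pi_B^2}(u,0,0)=(\sqrt{\Div_A}\,u,0,0)$ (equivalently $\sgn(\Pi_B)v=(\sqrt{\Div_A}\,u,0,0)$) to read off the estimate. The only difference is cosmetic: you make explicit the verification that the first coordinate is reducing for $\Pi_B^2$, a fact the paper takes as already established in the proof of Theorem~\ref{Thm:App:KatoVB}.
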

\begin{proof}
Let $\tilde{B}_i$ be given as in the hypothesis of 
Theorem \ref{Thm:App:LipVB}, so that $\|\tilde{B}_1\|_\infty = \norm{\tilde{a}}_\infty \leq \eta_1$
and $\|\tilde{B}_2\|_\infty = \|\tilde{A}\|_\infty \leq \eta_2$. 
By Theorem \ref{Thm:App:LipVB}, 
\begin{equation*}\label{perturb}\norm{\sgn(\Pi_B)v - \sgn(\Pi_{B+\tilde{B}})v}
	\lesssim (\|\tilde{a}\|_\infty + \|\tilde{A}\|_\infty)
	\norm{v}\end{equation*}
for all $v\in \Hil$, in particular for all 
$v =\begin{pmatrix}  0 \\ Su \end{pmatrix}$ with $u \in \SobH{1}(\cV)$. Note that 
$v= \Pi_B\begin{pmatrix} u \\  0 \end{pmatrix} = \Pi_{B+\tilde B}\begin{pmatrix} u \\  0 \end{pmatrix}$
so $\sgn(\Pi_B)v = \begin{pmatrix} \sqrt{{\Div_A}}\,u \\  0 \end{pmatrix}$ 
and $\sgn(\Pi_{B+\tilde B})v = \begin{pmatrix} \sqrt{{\Div_{A+\tilde A}}}\,u \\  0 \end{pmatrix}$. 
Thus, on substitution, we obtain the desired result.
\end{proof}

We point out that the conclusions
of both these theorems 
hold if, instead of assuming the hypotheses of 
Theorem \ref{Thm:App:KatoVB},
we assume the hypotheses of Theorem \ref{Thm:App:KatoTen}.
This yields Lipschitz estimates and 
the stability result for $(p,q)$ tensors.
We
conclude this section by
highlighting the stability of the square root
for scalar-valued functions 
as a corollary.

\begin{corollary}[Stability of the square root for functions]
\label{Thm:App:Stab}
Suppose that 
$\modulus{\Ric} \leq C$, 
$\inj(M) \geq \kappa > 0$
and
the following ellipticity
condition holds: 
there exist $\kappa_1, \kappa_2 > 0$ such that
$$
\re\inprod{a u, u}  \geq \kappa_1 \norm{u}^2
\qquad\text{and}\qquad 
\re\inprod{A Sv, Sv} \geq \kappa_2 \norm{v}_{\SobH{1}}^2$$
for all $u \in \Lp{2}(\cM)$
and $v \in \SobH{1}(\cM)$.
Fix $\eta_i < \kappa_i$. If  $\norm{\tilde{a}}_\infty \leq \eta_1$,
$\|\tilde{A}\|_\infty \leq \eta_2$,  then
$$\norm{\sqrt{{\Div_A}}\,u - \sqrt{{\Div_{A + \tilde{A}}}}\,u}
	\lesssim (\norm{\tilde{a}}_\infty + \|\tilde{A}\|_\infty)
	\norm{u}_{\SobH{1}}$$
for all $u \in \SobH{1}(\cM)$. 
The implicit constant depends on $C, \kappa, \kappa_i, A, a$ and $\eta_i$.
\end{corollary}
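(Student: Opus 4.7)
The plan is to obtain this as the scalar-valued specialization of \thmref{Thm:App:StabSq}, via the remark immediately preceding the corollary which asserts that the conclusions of that theorem continue to hold under the hypotheses of \thmref{Thm:App:KatoTen}. Scalar-valued functions on $\cM$ are sections of the trivial line bundle, i.e.\ $(0,0)$-tensors, for which the connection reduces to $\extd$, the fibre metric is trivial, and \corref{Cor:App:GBGExt} supplies GBG coordinates with uniform frame bounds.

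First I would verify that the hypotheses of \thmref{Thm:App:KatoTen} hold in this scalar setting; this verification has essentially already been carried out inside the proof of \thmref{Thm:Int:KatoFn}. The Ricci bound $\modulus{\Ric} \leq C$ implies $\Ric \geq \eta\,\mg$ for some $\eta \in \R$, which together with $\inj(\cM) \geq \kappa$ yields \eref{Def:Pre:Eloc} and the local \Poincare inequality \eref{Def:Pre:Ploc} as in \cite{Morris3}. The ellipticity assumption on $(a, A)$ is part of the hypotheses. Proposition 3.3 of \cite{Hebey} provides $\dom(\Lap) \subset \SobH{2}(\cM)$ and the density of $\Ck[c]{\infty}(\cM)$ in $\dom(\Lap)$; the Riesz bound \eref{Riesz} then follows by integrating the Weitzenb\"ock--Bochner identity against compactly supported smooth $f$ and extending by density.

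With all the hypotheses of \thmref{Thm:App:KatoTen} verified in the scalar case, the remark preceding the corollary allows one to invoke \thmref{Thm:App:StabSq} applied with $\cV = \C$, yielding the desired Lipschitz estimate with the stated dependences of the implicit constant. There is no substantive obstacle: the argument of \thmref{Thm:App:StabSq}, based on \corref{Cor:Res:Stab} together with the identification of $\sqrt{\Div_A}\,u$ as the first component of $\sgn(\Pi_B)$ applied to a suitable lift of $u$, transfers without modification to the scalar setting, so once \thmref{Thm:Int:KatoFn} is available this corollary is a direct consequence.
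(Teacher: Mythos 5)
Your proof is correct and takes essentially the same approach the paper intends: the corollary is stated without an explicit proof, but is understood to follow by combining the remark preceding it (that \thmref{Thm:App:StabSq} holds under the hypotheses of \thmref{Thm:App:KatoTen}) with the verification, already carried out in the proof of \thmref{Thm:Int:KatoFn}, that those hypotheses are satisfied for $(0,0)$-tensors under the stated curvature, injectivity radius, and ellipticity assumptions. Your spelling out of the Weitzenb\"ock--Bochner argument for the Riesz bound \eref{Riesz} and of the lift $v = (0,Su)$ matches the paper's reasoning.
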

\section{Harmonic Analysis}
\label{Sec:Harm}

 \subsection{Carleson measure reduction} 
\label{Sec:Harm:CN}

It remains for us to prove Theorem \ref{Thm:Ass:Main}. 
The main point is to show that the 
main local quadratic estimate \eqref{quad} in Proposition 
\ref{Prop:Ass:Main} is a consequence of hypotheses (H1)-(H8). 

The proof proceeds by reducing the main quadratic estimate
to a \emph{Carleson measure} estimate.
 Thus,  we first recall the notion of a (local)-Carleson measure.
Set $\cM_{+} = \cM \times (0, t_0]$,
for some $t_0 < \infty$.
We emphasise that we restrict
our considerations to $t \leq t_0$.
The \emph{Carleson box} over $\Q \in \DyQ_t$
is then defined as $\CBox_{\Q} = \close{\Q} \times (0,\len(\Q)]$.
A positive Borel measure $\nu$
 on $\cM_{+}$ is 
called a \emph{Carleson measure}
if there exists $C > 0$ such that 
${\nu(\CBox_{\Q})} \leq C \mu(\Q)$
for all dyadic cubes $\Q \in \DyQ_t$
for $t \leq t_0$. 
The Carleson norm $\norm{\nu}_{\Carl}$
is defined by 
$$
\norm{\nu}_{\Carl} = \sup_{\Q \in \DyQ_t,\ t \leq t_0} \frac{\nu(\CBox_{\Q})}{ \mu(\Q)}$$
Let $\Carl$ denote the set of all Carleson measures.

 The reader will find a more
elaborate description of Carleson measures
in the classical setting
in \S\RNum{2}.2 of \cite{stein:harm}
by Stein. The local construction described
here is a fraction of a
larger theory explored by Morris in 
\cite{Morris} and
\cite{Morris3}. 

With a description of a Carleson
measure in hand, we now illustrate how to reduce the
\emph{main local
quadratic estimate} \eqref{quad}
to a Carleson measure estimate. 
The 
key is the following consequence of Carleson's theorem,
which is a special case of Theorem 4.3 in \cite{Morris3}.
 Recall the dyadic averaging operator $\Av_t$
from \S\ref{Sect:Prelim:GBG}. 

\begin{proposition}
\label{Prop:Harm:AvCarl}
For all $u \in \Hil$ and for all $\nu \in \Carl$,
$$\iint_{\cM \times (0, t_0]} \modulus{\Av_t u}^2\ d\nu(x,t)
\lesssim \norm{u}^2 \norm{\nu}_{\Carl}.$$
\end{proposition}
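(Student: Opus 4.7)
The plan is to reduce the assertion to a standard dyadic Carleson embedding theorem, as in Theorem~4.3 of \cite{Morris3}. The argument has three ingredients: a pointwise domination of $\modulus{\Av_t u}$ by a truncated dyadic maximal function, the Carleson embedding itself, and $\Lp{2}$-boundedness of the maximal function. The only non-classical step is the first one, where the vector-bundle structure must be reduced to a scalar one; once that is done, everything else mirrors the standard Euclidean theory.

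For the pointwise bound, I would fix $x \in \Q \in \DyQ_t$ with $t \leq t_0 \leq \scale$. The GBG chart on $\parent{\Q}$ supplies a basis $\set{e^i}$ with $\mh \simeq I$; writing $u = u_i e^i$ componentwise and applying the triangle inequality gives
$$
\modulus{\Av_t u(x)} \simeq \sum_i \modulus{\fint_\Q u_i\ d\mu} \leq \fint_\Q \modulus{u}\ d\mu \leq \Max u(x),
$$
where $\Max u(x) := \sup\set{\fint_\Q \modulus{u}\ d\mu : x \in \Q \in \DyQ}$ is the truncated dyadic maximal function. This reduces the bundle-valued problem to a scalar one.

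For the Carleson embedding, I would establish that for every $g:\cM_+ \to \C$ which is constant in $x$ on each $\Q \in \DyQ_t$ for every $t \leq t_0$,
$$
\iint_{\cM_+} \modulus{g(x,t)}^2\ d\nu \lesssim \norm{\nu}_\Carl \int_\cM \modulus{N_* g}^2\ d\mu,
$$
with $N_* g(x) := \sup_{t \leq t_0} \modulus{g(x,t)}$. By the layer-cake formula it suffices to bound $\nu(E_\lambda)$, where $E_\lambda = \set{(x,t):\modulus{g(x,t)}^2 > \lambda}$. Since $g$ is piecewise constant in $x$, the set $E_\lambda$ is a union of pieces $\Q \times (\delta^{j+1}, \delta^j]$ with $\Q \in \DyQ^j$; selecting the family $\set{\Q_k}$ of maximal such base cubes (automatically pairwise disjoint by dyadic nesting), the Carleson boxes $\CBox_{\Q_k}$ cover $E_\lambda$ while each $\Q_k$ lies in $\set{N_* g^2 > \lambda}$. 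Hence
$$
\nu(E_\lambda) \leq \sum_k \nu(\CBox_{\Q_k}) \leq \norm{\nu}_\Carl \sum_k \mu(\Q_k) \leq \norm{\nu}_\Carl \mu\set{N_* g^2 > \lambda},
$$
and integrating in $\lambda$ yields the embedding.

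Applying this to $g(x,t) = \Av_t u(x)$, which is constant on each $\Q \in \DyQ_t$ in $x$ by construction, and combining with the pointwise bound of step one,
$$
\iint_{\cM_+} \modulus{\Av_t u}^2\ d\nu \lesssim \norm{\nu}_\Carl \int_\cM \modulus{\Max u}^2\ d\mu \lesssim \norm{\nu}_\Carl \norm{u}^2,
$$
where the final inequality is the $\Lp{2}$-boundedness of $\Max$. This last fact follows by Marcinkiewicz interpolation from the weak-type $(1,1)$ bound, which in turn uses only uniform doubling at scales $\leq \scale$ — a consequence of (\ref{Def:Pre:Eloc}). The main technical subtlety, namely the bundle-valued nature of $u$, is resolved cleanly by the GBG framework in the first step; after that, the Carleson selection and interpolation are standard.
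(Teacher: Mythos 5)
The paper does not give its own proof here: it simply cites the proposition as a special case of Theorem~4.3 in \cite{Morris3}. Your argument is a faithful reconstruction of that result -- pointwise domination of $\modulus{\Av_t u}$ by a truncated dyadic maximal function (reducing the bundle-valued statement to a scalar one via the GBG coordinate system and $\mh \simeq I$), the layer-cake Carleson embedding via selection of maximal base cubes with disjointness by dyadic nesting, and $\Lp{2}$-boundedness of the dyadic maximal function -- and it is correct. One minor expository point: for a general $g$ assumed only to be constant in $x$ on each $\Q \in \DyQ_t$, the set $E_\lambda$ is not literally a disjoint union of product pieces $\Q \times (\delta^{j+1},\delta^j]$, since $g$ may vary in $t$ within a fixed dyadic scale; the correct statement is that for each $(x,t) \in E_\lambda$ the cube $\Q \in \DyQ_t$ containing $x$ lies inside $\set{N_*g^2 > \lambda}$, and one selects maximal such cubes. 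The rest of your argument already uses exactly this, so nothing breaks; and in the actual application $g = \Av_t u$ is constant in $t$ on each interval $(\delta^{j+1},\delta^j]$ anyway, so the stronger structural claim also holds there.
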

 
Further, recall
that  whenever $w = w_i\ e^i(x) \in \cV_x$
for $x \in \Q$ in $\set{e^i}$, 
the associated GBG coordinates of $\Q$,
we define the the associated GBG constant section
$\omega(y) = w_i\ e^i(y)$ when $y \in B(x_{\parent{\Q}},\brad)$
and $\omega(y) = 0$ otherwise. 
Then, we define the \emph{principal part}
as $\pri_t(x)w = (\Theta_t^B \omega)(x)$. 
With this notation, and for $0 < t_0 < \infty$ to be chosen later, we 
split the main quadratic estimate in the following way:
\begin{align*}\label{quad2}
\int_{0}^{t_0} \norm{\Theta_t^B P_t u}^2\ \frac{dt}{t} 
	&\lesssim \int_{0}^{t_0} \norm{\Theta_t^B P_t u - \pri_t\Av_t P_t u}^2\ \frac{dt}{t} \tag{Q2}\\
		&+ \int_{0}^{t_0} \norm{\pri_t \Av_t(P_t - I)u}^2\ \frac{dt}{t} \\
		&+  \int_{0}^{t_0} \int_{M} \modulus{\Av_t u}^2 \modulus{\pri_t}^2\ \frac{d\mu(x)\ dt}{t}. 
\end{align*}
We call the first two terms on the right of \eqref{quad2} the \emph{principal terms}.
Proposition \ref{Prop:Harm:AvCarl}
 then 
allows us to reduce estimating the last term to 
proving that
$$A \mapsto \int_{A} \modulus{\pri_t(x)}^2 \frac{d\mu(x)\ dt}{t}$$
is a Carleson measure.
We call this term the \emph{Carleson term}.

\subsection{Estimation of principal terms}

 In this section, as the title suggests,
we illustrate how to estimate the two 
principal terms of \eqref{quad2}. 
We proceed to do so by
coupling the existence of
exponential off-diagonal bounds
with our dyadic \Poincare inequality
and cancellation hypothesis.
The estimates here are straightforward
and are more or less adapted from
\cite{AKMC}, \cite{Morris3} and \cite{AAMC}.

First, we quote the the following theorem of
\cite{Morris3}, which is essentially
contained in \cite{AKMC}.

\begin{proposition}[Off-diagonal bounds]
\label{Prop:Harm:Offdiag}
Let $U_t$ be either $R_t^B, P_t^B,Q_t^B$ or $\Theta_t^B$
for $t \in \R^+$. There exists a $C_{\Theta} > 0$,
which only depends on (H1)-(H6),
for every $M > 0$ there exists $c > 0$
with
$$ \norm{\ind{E} U_t u} \leq c \inprod{\frac{\modulus{t}}{d(E,F)}}^{M}
		\exp\cbrac{-C_{\Theta} \frac{d(E,F)}{t}} \norm{\ind{F} u}$$
whenever $E,F$ are Borel, $\spt u \subset F$.
\end{proposition}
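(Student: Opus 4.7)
The plan is to follow the standard commutator argument underlying the off-diagonal bounds in \cite{AKMC} and \cite{Morris3}, adapted to the vector-bundle setting. The key preliminary observation is that for any bounded Lipschitz function $\eta : \cM \to \R$, hypothesis (H6) (together with its extension to $\adj{\Gamma}$ noted in the remark following it, and the boundedness of $B_1, B_2$ from (H5)) yields that $[\Pi_B, \eta I]$ is a multiplication operator with
$$\modulus{[\Pi_B, \eta I] u(x)} \leq C \norm{\conn \eta}_\infty \modulus{u(x)}$$
for almost every $x \in \cM$, where $C$ depends only on $C_\Gamma$, $\norm{B_1}_\infty$ and $\norm{B_2}_\infty$. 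This is essentially the only place where the differential structure of $\Pi_B$ enters the argument.

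I would first treat the resolvent $R_t^B = (I + it\Pi_B)^{-1}$. The identity
$$\eta R_t^B - R_t^B \eta = it\, R_t^B\, [\Pi_B, \eta I]\, R_t^B,$$
combined with the uniform resolvent bound $\norm{R_t^B} \lesssim 1$ (which follows from the $\omega$-bisectoriality of $\Pi_B$ established under (H1)--(H3)), gives polynomial decay of any order $M$ after an $M$-fold iteration: one applies a nested sequence of Lipschitz cutoffs between $E$ and $F$, each with Lipschitz constant $\simeq 1/d(E,F)$, and the commutator picks up a factor of $t/d(E,F)$ at each step. To upgrade polynomial decay to exponential decay, I would conjugate by an exponential weight: let $\rho(x) = \min(d(x,F), K)$, which is bounded and $1$-Lipschitz, and analyse $T_\alpha := e^{\alpha\rho}R_t^B e^{-\alpha\rho}$. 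A formal expansion based on the conjugated resolvent equation expresses $T_\alpha$ as a Neumann-type series whose general term is bounded by $(C\alpha t)^k$, which converges in operator norm provided $\alpha t$ lies below a fixed threshold determined only by (H6). Choosing $\alpha \simeq 1/t$ and letting $K \to \infty$ produces the exponential factor $\exp(-C_\Theta d(E,F)/t)$.

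Finally, I would pass from $R_t^B$ to the remaining operators. The algebraic identities $R_t^B R_{-t}^B = P_t^B$, $R_t^B + R_{-t}^B = 2 P_t^B$ and $R_{-t}^B - R_t^B = 2i Q_t^B$ transfer the off-diagonal bounds for the resolvent to $P_t^B$ and $Q_t^B$ immediately, since the estimate is symmetric in $t \mapsto -t$. For $\Theta_t^B = t\adj{\Gamma}_B P_t^B$, I would run the same commutator scheme directly, using (H6) for $\adj{\Gamma}$ together with the boundedness of $B_1, B_2$ to control $[\Theta_t^B, \eta I]$; equivalently, writing $\Theta_t^B = Q_t^B - t\Gamma P_t^B$ reduces the problem to a separate treatment of $t\Gamma P_t^B$ by analogous commutator estimates. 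The main obstacle is the bookkeeping in the exponential upgrade: one must track constants carefully through the Neumann series, since the operator-valued weight $e^{\alpha\rho}[\Pi_B, \rho I] e^{-\alpha\rho}$ is only bounded when $\alpha t$ is small, and this must be compatible with the choice $\alpha \simeq 1/t$ in order to deliver a single constant $C_\Theta$ depending only on (H1)--(H6), as stated.
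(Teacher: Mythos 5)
The paper does not prove this proposition itself; it is quoted directly from \cite{Morris3} (Proposition~5.8 there), with the remark that the argument is essentially the one in \cite{AKMC}. Your sketch correctly reconstructs the argument used in those references---the multiplication-operator commutator bound for $[\Pi_B,\eta I]$ coming from (H5)--(H6), iteration of the resolvent commutator identity for polynomial decay, exponential conjugation by bounded Lipschitz weights $e^{\pm\alpha\rho}$ with $\alpha\simeq 1/t$ and a Neumann-series perturbation for the exponential factor, and the algebraic identities transferring the resolvent bound to $P_t^B$, $Q_t^B$ and $\Theta_t^B$---so your proposal matches the cited proof; the one small technical point to make explicit is that (H6) is stated for $\eta\in\Ck[c]{\infty}(\cM)$, whereas the conjugation step uses Lipschitz cutoffs such as $\rho(x)=\min(d(x,F),K)$, so one needs a mollification step to extend the commutator estimate from smooth to Lipschitz $\eta$ (this is why the metric-space version of (H6) later in the paper is phrased directly in terms of Lipschitz functions).
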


In \cite{Morris3}, Morris points out that
$\Theta_t^B$ extends to an operator
$\Theta_t^B: \Lp{\infty}(\cV) \to \Lp[loc]{2}(\cV)$
when $t \in (0, \inprod{C_{\Theta}/(2\lambda\inprod{C_1\delta^{-1}})}]$. 
Since we require this in the harmonic analysis,
 and recalling the scale $\scale$ which we 
chose in \S\ref{Sect:Prelim:GBG} in interfacing
GBG with the dyadic decomposition,  we will 
fix an even smaller scale $\hscale \leq \scale$
such that $\hscale \leq \inprod{C_{\Theta}/(2\lambda\inprod{C_1\delta^{-1}})}$.

Next, we have the following technical lemma.

\begin{lemma}
\label{Lem:Harm:Indest}
Let $r > 0$ and suppose that $\set{B_j = B(x_j, r)}$
is a disjoint collection of balls. Then, whenever $\eta \geq 1$,
$$ 
\sum_{j} \ind{\eta B_j} \lesssim \eta^{\kappa} \e^{4\lambda \eta r }.$$\
\end{lemma}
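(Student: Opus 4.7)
The plan is to prove this via a standard pointwise bounded overlap argument, using disjointness of the $B_j$ together with the exponential local doubling assumption (\ref{Def:Pre:Eloc}). Fix $x \in \cM$ and set $N(x) = \#\{j : x \in \eta B_j\}$; the claim then reduces to showing $N(x) \lesssim \eta^\kappa e^{4\lambda c \eta r}$ uniformly in $x$.

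First, I would observe that if $x \in \eta B_j$, meaning $d(x, x_j) < \eta r$, then for any $y \in B_j$, the triangle inequality and $\eta \geq 1$ give $d(y, x) < r + \eta r \leq 2\eta r$, so $B_j \subset B(x, 2\eta r)$. Since the $B_j$ are disjoint,
$$\sum_{j : x \in \eta B_j} \mu(B_j) \;\leq\; \mu(B(x, 2\eta r)).$$
Next, for each such $j$, the triangle inequality again yields $B(x, 2\eta r) \subset B(x_j, 3\eta r)$, so applying (\ref{Def:Pre:Eloc}) at the centre $x_j$ with $t = 3\eta \geq 1$ gives
$$\mu(B(x, 2\eta r)) \;\leq\; \mu(B(x_j, 3\eta r)) \;\leq\; c\,(3\eta)^\kappa e^{3\lambda\eta r}\,\mu(B_j).$$
Rearranging and summing over the $N(x)$ indices with $x \in \eta B_j$ then yields
$$N(x)\,\mu(B(x, 2\eta r)) \;\leq\; c\,(3\eta)^\kappa e^{3\lambda \eta r} \sum_{j : x \in \eta B_j} \mu(B_j) \;\leq\; c\,(3\eta)^\kappa e^{3\lambda \eta r}\, \mu(B(x, 2\eta r)).$$
Since (\ref{Def:Pre:Eloc}) also guarantees $0 < \mu(B(x, 2\eta r)) < \infty$, division gives $N(x) \leq c(3\eta)^\kappa e^{3\lambda\eta r}$, and absorbing the factor $c\,3^\kappa$ into $\lesssim$ and using $c \geq 1$ and $3 \leq 4c$ yields the stated bound $\eta^\kappa e^{4\lambda c \eta r}$.

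There is no substantive obstacle here; the only point requiring any care is making sure $\eta \geq 1$ is used to bring $r$ under the $\eta r$ scale so that $B_j \subset B(x, 2\eta r)$, and that the doubling is applied with the exponent $t = 3\eta \geq 1$ so (\ref{Def:Pre:Eloc}) is indeed available. The specific constants $3$ versus $4c$ in the exponent are incidental and absorbed at the end using $c \geq 1$.
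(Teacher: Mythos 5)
Your proof is correct and follows essentially the same bounded-overlap argument as the paper: fix a point $x$, use disjointness of the $B_j$ to bound $\sum_j \mu(B_j)$ by $\mu(B(x,2\eta r))$, then compare back to $\mu(B_j)$ via the exponential local doubling condition applied at each $x_j$. The only difference is cosmetic --- you invoke the doubling estimate once with $t=3\eta$, whereas the paper applies it twice (first comparing $\mu(\eta B_j)$ to $\mu(B_j)$ and then $\mu(B(x,(\eta+1)r))$ to $\mu(\eta B_j)$) --- and both land on the same uniform overlap bound.
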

\begin{proof}
Fix $x \in \cM$ and let 
$\sC_{x} = \set{x_j \in \cM: x \in B(x_j,\eta r)}$.
It is easy to see that 
$\sum_{j} \ind{\eta B_j}(x) = \card \sC_{x}$.
That $x_j \in \sC_{x}$ is equivalent to
saying that $d(x,x_j) < \eta r$, and therefore
for any $y \in \eta B_j$, 
$d(x,y) \leq d(x,x_j) + d(x_j, y) < (\eta + 1) r$.
That is, $B(x,(\eta + 1)r) \supset B(x_j,r)$
and by the disjointness of $\set{B_j}$,
$\mu (B(x,(\eta + 1)r)) \geq \sum_{x_j \in \sC_{x}} \mu(B(x_j,r))$.

Next, note that (\ref{Def:Pre:Eloc}) implies that
$ \mu(\eta B_j) \lesssim \eta^{\kappa} \e^{\lambda \eta r} \mu(B_j)$
and therefore,
$$
\sum_{x_j \in \sC_{x}} \mu(\eta B_j) \lesssim \eta^{\kappa} \e^{\lambda \eta r } \mu(B(x,(\eta + 1)r)).$$
Thus, it is enough to compare $\mu(\eta B_j)$ to $\mu(B(x,(\eta + 1)r))$.
So, take any $y \in B(x,(\eta + 1)r)$, and note that
$d(x,y) \leq d(x, x_j) + d(x_j, y) < (2\eta + 1)r < 3 \eta r$.
Thus,
$$\mu(B(x,(\eta + 1)r)) 
	\leq \mu(B(x_j, 3\eta r) 
	\lesssim 3^\kappa \e^{3\lambda \eta r } \mu(B(x_j,\eta r))$$
and the estimate 
$$
\card \sC_{x}\ \e^{-3\lambda \eta r} 
	\lesssim \sum_{x_j \in \sC_{x}} \frac{\mu(\eta B_j)}{\mu(B(x,(\eta+1)r))}
	\lesssim \eta^{\kappa} e^{\lambda c \eta r}$$
completes the proof.
\end{proof}

\begin{proposition}[First principal term estimate]
For all $u \in \ran(\Pi)$,
$$
\int_{0}^{t_2} \norm{\Theta_t^B P_t u - \pri_t\Av_t P_t u}^2\ \frac{dt}{t} \lesssim \norm{u}^2$$
where $t_2 \leq \min\set{\hscale,\frac{C_{\Theta}}{4\lambda (c + 4\tilde{c})}}.$
\end{proposition}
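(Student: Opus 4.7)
The plan is to reduce pointwise on each cube to an expression of the form $\Theta_t^B(f - \omega_{f_\Q})$, perform a dyadic annular decomposition, apply off-diagonal bounds together with the dyadic \Poincare inequality (H8)-1, and then invoke coercivity (H8)-2 and the spectral theorem for the self-adjoint operator $\Pi$.

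Set $f = P_t u$ and let $\omega_{f_\Q}$ denote the GBG constant section taking value $f_\Q \in \C^N$ in each GBG coordinate chart. By the definition of the principal part, for $x \in \Q \in \DyQ_t$,
\begin{equation*}
(\Theta_t^B f)(x) - \pri_t(x)\Av_t f(x) = \bigl(\Theta_t^B(f - \omega_{f_\Q})\bigr)(x),
\end{equation*}
so that $\|\Theta_t^B f - \pri_t \Av_t f\|^2 = \sum_{\Q \in \DyQ_t} \|\ind{\Q}\Theta_t^B(f - \omega_{f_\Q})\|^2$. Writing $B_\Q = B(x_\Q,t)$ and fixing $\tilde{c} \geq C_1/\delta$ so that (H8)-1 applies, decompose $f - \omega_{f_\Q} = h_\Q^0 + \sum_{k\geq 1} h_\Q^k$ where $h_\Q^0 = (f - \omega_{f_\Q})\ind{\tilde c B_\Q}$ and $h_\Q^k = (f - \omega_{f_\Q})\ind{2^k \tilde c B_\Q \setminus 2^{k-1}\tilde c B_\Q}$.

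For $k \geq 1$, Proposition~\ref{Prop:Harm:Offdiag} with $M$ large yields $\|\ind{\Q}\Theta_t^B h_\Q^k\|^2 \lesssim 2^{-2kM} \e^{-2 C_\Theta \tilde c\, 2^{k-1}} \|h_\Q^k\|^2$, while the uniform boundedness of $\Theta_t^B$ handles $k=0$. The norm $\|h_\Q^k\|$ is controlled by the dyadic \Poincare inequality (H8)-1 applied with $r = 2^k \tilde c$:
\begin{equation*}
\bigl\|(f - \omega_{f_\Q})\ind{2^k \tilde c B_\Q}\bigr\|^2 \lesssim \bigl(1 + (2^k \tilde c)^\kappa \e^{\lambda c\, 2^k \tilde c\, t}\bigr)(2^k \tilde c\, t)^2 \int_{2^k \tilde c^2 B_\Q}\!\!\bigl(|\Xi f|^2 + |f|^2\bigr)\, d\mu,
\end{equation*}
where $\omega_{f_\Q}$ is identified with the constant vector $f_\Q$ inside the chart $B(x_{\parent{\Q}},\brad)$, valid because $t_2 \leq \brad/5$; for portions of the annulus spilling beyond one GBG chart, one subdivides according to the GBG parents met and re-applies the estimate, absorbing the multiplicity via Lemma~\ref{Lem:Harm:Indest}. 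The calibration $t_2 \leq C_\Theta/(4\lambda c(1 + 4\tilde c))$ is precisely such that the off-diagonal decay $\e^{-C_\Theta \tilde c\, 2^k}$ dominates the volume growth $\e^{\lambda c\, 2^{k+1}\tilde c\, t}$, giving a series in $k$ that is uniformly summable.

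Summing over $\Q \in \DyQ_t$ and invoking Lemma~\ref{Lem:Harm:Indest} to bound the overlap of the enlarged balls $2^k \tilde c^2 B_\Q$ produces
\begin{equation*}
\bigl\|\Theta_t^B P_t u - \pri_t \Av_t P_t u\bigr\|^2 \lesssim t^2 \bigl(\|\Xi P_t u\|^2 + \|P_t u\|^2\bigr), \qquad t \leq t_2.
\end{equation*}
Since $u \in \ran(\Pi)$ implies $P_t u \in \dom(\Pi)\intersect \ran(\Pi)$ (as $P_t$ commutes with $\Pi$), the coercivity hypothesis (H8)-2 gives $\|\Xi P_t u\|^2 + \|P_t u\|^2 \lesssim \|\Pi P_t u\|^2$, and thus the right-hand side is $\lesssim \|Q_t u\|^2$ with $Q_t = t\Pi(I + t^2\Pi^2)^{-1}$. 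Integrating against $\tfrac{dt}{t}$ and invoking the standard quadratic estimate $\int_0^\infty \|Q_t u\|^2 \tfrac{dt}{t} \lesssim \|u\|^2$ for the self-adjoint $\Pi$ (via the spectral theorem) completes the proof. The principal obstacle is the bookkeeping required when the annulus $2^k \tilde c B_\Q$ spills out of a single GBG chart: on such pieces $\omega_{f_\Q}$ represents $f_\Q$ in a \emph{different} coordinate system from the one at $\Q$, and the precise constraint on $t_2$ is what allows the exponential off-diagonal decay to beat the accumulated volume-doubling factors across this multi-scale decomposition.
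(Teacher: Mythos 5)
Your argument matches the paper's proof in structure and substance: annular decomposition around an enlarged ball of $\Q$, off-diagonal bounds from Proposition~\ref{Prop:Harm:Offdiag}, the dyadic \Poincare inequality (H8)-1 on each annulus, overlap control via Lemma~\ref{Lem:Harm:Indest}, a calibration of $t_2$ so that the off-diagonal decay $\e^{-C_\Theta \cdot 2^j}$ dominates the accumulated exponential volume factors for $t \leq t_2$, and finally coercivity (H8)-2 together with the self-adjoint quadratic estimate $\int_0^\infty \norm{t\Pi P_t u}^2\,\tfrac{dt}{t} \lesssim \norm{u}^2$. One point worth noting: the chart-spilling bookkeeping you describe is unnecessary --- the cube average $v_\Q$ used in (H8)-1 is by definition cut off to $B(x_{\parent{\Q}},\brad)$, and the proof of the dyadic \Poincare inequality (Proposition~\ref{Prop:App:DyPoin}) already treats the regime $rt \geq \brad/5$ separately with a crude $\mathrm{L}^2$ bound, so the spill-over beyond a single GBG chart is absorbed automatically and no subdivision across GBG parents is required.
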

\begin{proof}
Let $v = P_t u$.
\begin{enumerate}[(i)]

\item First,
we note that  
$$\norm{\Theta_t^B P_t u - \pri_t\Av_t P_t u}^2 = 
	\sum_{\Q \in \DyQ_t} \norm{\Theta_t^B(v - v_{\Q})}_{\Lp{2}(\Q)}^2.$$
For each $\Q \in \DyQ_t$, write $B_{\Q} = B(x_{\Q}, \frac{C_1}{\delta} t) \supset \Q$
and $C_j(\Q) = 2^{j+1}B_{\Q} \setminus 2^j B_{\Q}.$
Then, for each such cube $\Q$,
\begin{align*}
\int_{\Q} \modulus{\Theta_t^B(v - v_{\Q})}^2\ d\mu
	&= \int_{\Q} \modulus{\Theta_t^B \cbrac{\sum_{j=0}^\infty \ind{C_j(\Q)} (v - v_{\Q})}}^2\ d\mu  \\
	&\leq \sum_{j=0}^\infty  \int_{\Q} \modulus{\Theta_t^B (\ind{C_j(\Q)} (v - v_{\Q}))}^2\ d\mu \\
	&\lesssim \sum_{j=0}^\infty \inprod{\frac{t}{d(\Q,C_j(\Q))}}^M 
		\exp\cbrac{- C_{\Theta} \frac{d(\Q,C_j(\Q))}{t}}
		\int_{C_j(\Q)} \modulus{v - v_{\Q}}^2\ d\mu
\end{align*}

\item
Next, note that by (4.1) in \cite{Morris3} 
$$2^j \frac{C_1}{\delta} t \leq d(x_{\Q}, C_j(\Q)) \leq d(\Q, C_j(\Q)) + \diam \Q$$
which implies that
$$
\inprod{\frac{t}{d(\Q,C_j(\Q))}}^M \lesssim 2^{-M(j+1)}.$$
Next, for $j \geq 1$, 
$$d(\Q,C_j(\Q)) \geq 2^j \frac{C_1}{2\delta} t$$
and therefore,  
$$\exp\cbrac{- C_{\Theta} \frac{d(\Q,C_j(\Q))}{t}}
	\leq \exp\cbrac{-  \frac{C_{\Theta} C_1}{4\delta} 2^{j+1}}.$$
For $j = 0$,
$$
\exp\cbrac{- C_{\Theta} \frac{d(\Q,C_j(\Q))}{t}} 
	= 1
	= \exp\cbrac{\frac{C_{\Theta} C_1}{4\delta}}
		\exp\cbrac{-  \frac{C_{\Theta} C_1}{4\delta} 2^{0}}.$$
Fix $t' > 0$ to be chosen later. Then, for all $t \leq t'$,
$$\exp\cbrac{- C_{\Theta} \frac{d(\Q,C_j(\Q))}{t}}
	\lesssim \exp\cbrac{-  \frac{C_{\Theta} C_1}{4\delta t'} 2^{j+1}t}$$
for all $j \geq 0$.

\item
Combining the estimates in (i) and (ii),
$$\int_{\Q} \modulus{\Theta_t^B(v - v_{\Q})}^2\ d\mu
	\lesssim \sum_{j=0}^\infty 2^{-M(j+1)} \exp\cbrac{-\frac{C_{\Theta} C_1}{4\delta t'} 2^{j+1}t}
		\int_{C_j(\Q)} \modulus{v - v_{\Q}}^2\ d\mu.$$

Since $v = P_t u \in \dom(\Pi^2) = \dom(\Pi)\intersect \ran(\Pi)$,
we conclude from (H8)-1 that
\begin{align*}
\int_{C_j(\Q)} \modulus{v - v_{\Q}}^2\ d\mu
	&\lesssim \cbrac{1 + \cbrac{\frac{C_1}{\delta}}^\kappa 2^{\kappa(j+1)}
			\exp\cbrac{\frac{\lambda c C_1}{\delta} 2^{j+1} t}}
			\cbrac{\frac{C_1}{\delta}}^2 2^{2 (j+1)} \\	
	&\qquad\qquad
	t^2 \int_{\tilde{c} 2^{j+1}B_{\Q}} (\modulus{\Xi v}^2 + \modulus{v}^2)\ d\mu.
\end{align*}

Therefore,
\begin{align*}
\sum_{\Q \in \DyQ_t} \int_{\Q} &\modulus{\Theta_t^B(v - v_{\Q})}^2\ d\mu \\
	&\lesssim \sum_{\Q \in \DyQ_t} \sum_{j=0}^\infty 2^{-M(j+1)} 
			\exp\cbrac{-\frac{C_{\Theta} C_1}{4\delta} 2^{j+1}} \\
			&\qquad\qquad \cbrac{1 + \cbrac{\frac{C_1}{\delta}}^\kappa 2^{\kappa(j+1)}
			\exp\cbrac{\frac{\lambda c C_1}{\delta} 2^{j+1} t}}
			\cbrac{\frac{C_1}{\delta}}^2 2^{2 (j+1)} t^2 \\
		&\qquad\qquad\int_{\tilde{c} 2^{j+1}B_{\Q}} (\modulus{\Xi v}^2 + \modulus{v}^2)\ d\mu \\
	&\lesssim \sum_{j=0}^\infty 2^{-(M-\kappa-2)(j+1)} 
		\bbrac{ \exp\cbrac{-\frac{C_{\Theta} C_1}{4\delta t'} 2^{j+1} t}
			+ 	
			 \exp\cbrac{-\frac{C_1}{\delta}\cbrac{\frac{C_{\Theta}}{4 t'}- \lambda c} 2^{j+1} t}} \\
		&\qquad\qquad t^2 \int_{\cM} \sum_{\Q \in \DyQ_t} \ind{\tilde{c} 2^{j+1}B_{\Q}}
			(\modulus{\Xi v}^2 + \modulus{v}^2)\ d\mu.
\end{align*}

\item
Set $\eta = \tilde{c} 2^{j+1}C_1/(\delta a_0)$ and $r = a_0 t$
so that $\set{B(x_{\Q}, a_0 t) \subset \Q}$ is 
disjoint, and invoke Lemma \ref{Lem:Harm:Indest}
to conclude that
$$\ind{\tilde{c} 2^{j+1}B_{\Q}} \lesssim 2^{\kappa(j+1)} \exp\cbrac{\frac{4\lambda \tilde{c} C_1}{\delta} 2^{j+1}t}.$$

Combining this with (iii), we have
\begin{align*}
\sum_{\Q \in \DyQ_t} \int_{\Q} &\modulus{\Theta_t^B(v - v_{\Q})}^2\ d\mu \\
	&\lesssim \sum_{j=0} ^\infty 2^{-(M-2\kappa - 2)(j+1)} \\
		&\qquad\qquad\bbrac{ \exp\cbrac{-\frac{C_1}{\delta}\cbrac{\frac{C_{\Theta}}{4 t'} - 4\lambda \tilde{c}} 2^{j+1} t}
			+ 	
			 \exp\cbrac{- \frac{C_1}{\delta}\cbrac{\frac{C_{\Theta}}{4 t' } - \lambda c - 4\lambda \tilde{c}} 2^{j+1} t}} \\
		&\qquad\qquad t^2 \cbrac{\norm{\Xi v}^2 + \norm{v}^2}
\end{align*}

\item
Now, we choose $t' \leq \hscale$ so that
$$
-\frac{C_1}{\delta}\cbrac{\frac{C_{\Theta}}{4 t'} - 4 \lambda \tilde{c}} \leq 0
\quad\text{and}\quad
	- \frac{C_1}{\delta}\cbrac{\frac{C_{\Theta}}{4 t' } - \lambda c - 4 \lambda \tilde{c}} \leq 0.$$
That is,
$$
\frac{C_{\Theta}}{4\lambda ( c + 4\tilde{c}) }  \leq t'.$$
We can, therefore, set $t' = t_2$ and
then,
$$\sum_{\Q \in \DyQ_t} \int_{\Q} \modulus{\Theta_t^B(v - v_{\Q})}^2\ d\mu
	\lesssim t^2 \sum_{j=0} ^\infty 2^{-(M-2\kappa -2)(j+1)} \norm{\Pi v}^2$$
by invoking (H8)-2. By choosing $M > 2\kappa + 2$, and substituting
$v = P_t u$, we have
$$ \int_{0}^{t_2} \norm{\Theta_t^B P_t u - \pri_t\Av_t P_t u}^2\ \frac{dt}{t}
	\lesssim \int_{0}^{t_2} \norm{t \Pi P_t u}^2\ \frac{dt}{t}
	\leq \int_{0}^{\infty} \norm{t \Pi P_t u}^2\ \frac{dt}{t}
	\lesssim \norm{u}.$$ 
\end{enumerate}
\end{proof}

Next, as in \cite{AKMC} and \cite{Morris3}, 
we note the following consequence of (H7).

\begin{lemma}
\label{Lem:Harm:Upsilon}
Let $\Upsilon = \Gamma,\ \adj{\Gamma}$ or $\Pi$.
Then,
$$\modulus{\fint_{\Q} \Upsilon u\ d\mu}
	\lesssim \frac{1}{\len(\Q)^\eta}
		\cbrac{\fint_{\Q} \modulus{u}^2\ d\mu}^{\frac{\eta}{2}}
		\cbrac{\fint_{\Q} \modulus{\Upsilon u}^2\ d\mu}^{1 - \frac{\eta}{2}}
		+ \fint_{\Q} \modulus{u}^2$$
for all $ u \in \dom(\Upsilon)$
and $\Q \in \DyQ_t$ where $t \leq \hscale$.
\end{lemma}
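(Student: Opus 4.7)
The plan is to adapt the standard cutoff–plus–cancellation argument of \cite{AKMC} and \cite{Morris3} to the GBG setting. Fix $\Q \in \DyQ_t$ with $t \leq \scale$, and introduce a scale parameter $s \in (0,1]$ to be optimised at the end. Set $\Q_s = \{x \in \Q : d(x, \cM \setminus \Q) > s \len(\Q)\}$; by property (vi) of Theorem \ref{Thm:Dya:Christ}, the annulus $\Q \setminus \Q_s$ has measure at most $C_2 s^\eta \mu(\Q)$, which is the sole place the exponent $\eta$ in the conclusion enters. I would then pick a smooth cutoff $\phi_s \in \Ck[c]{\infty}(\cM)$ with $0\leq\phi_s\leq 1$, $\phi_s\equiv 1$ on $\Q_s$, $\spt\phi_s\subset\Q$, and $|\conn\phi_s|\lesssim 1/(s\len(\Q))$, obtained by mollifying $x \mapsto d(x,\cM\setminus\Q)$.

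The decomposition is
\[
\fint_{\Q}\Upsilon u\,d\mu
\;=\;\fint_{\Q}\phi_s \Upsilon u\,d\mu \;+\; \fint_{\Q}(1-\phi_s)\Upsilon u\,d\mu.
\]
For the interior piece I would use the commutator identity $\phi_s\Upsilon u = \Upsilon(\phi_s u) - \Mul_{\phi_s}u$ afforded by (H6) (valid for $\Upsilon \in \{\Gamma,\adj{\Gamma},\Pi\}$ by the remark following (H6)). Since $\spt(\phi_s u)\subset\Q$, the cancellation hypothesis (H7) applies and yields $|\fint_\Q \Upsilon(\phi_s u)\,d\mu|\lesssim \mu(\Q)^{-1/2}\|u\|_{\Lp{2}(\Q)} = (\fint_\Q|u|^2\,d\mu)^{1/2}$, which is the source of the additive error term in the statement. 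The commutator term is controlled pointwise by (H6), giving $|\fint_\Q \Mul_{\phi_s}u\,d\mu|\lesssim (s\len\Q)^{-1}\fint_\Q|u|\,d\mu$. The vector-valued cube integral $\int_\Q \Upsilon(\phi_s u)$ is interpreted via pairing with a unit-norm GBG constant section (see \S\ref{Sect:Prelim:GBG}), and the uniform equivalence $C^{-1}I\leq\mh\leq CI$ in GBG coordinates guarantees this pairing loses only a harmless constant.

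For the boundary piece, $(1-\phi_s)$ is supported on $\Q\setminus\Q_s$, so Cauchy--Schwarz combined with property (vi) of Theorem \ref{Thm:Dya:Christ} gives $|\fint_\Q (1-\phi_s)\Upsilon u\,d\mu|\lesssim s^{\eta/2}(\fint_\Q|\Upsilon u|^2\,d\mu)^{1/2}$. Collecting these three contributions,
\[
\Bigl|\fint_\Q \Upsilon u\,d\mu\Bigr|
\;\lesssim\; \Bigl(\fint_\Q|u|^2\Bigr)^{1/2}
 + \frac{1}{s\len(\Q)}\fint_\Q|u|
 + s^{\eta/2}\Bigl(\fint_\Q|\Upsilon u|^2\Bigr)^{1/2},
\]
and I would then optimise $s\in(0,1]$ to balance the two $s$-dependent terms, recovering the claimed interpolated bound between the $|u|$ and $|\Upsilon u|$ averages with the factor $\len(\Q)^{-\eta}$.

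The main obstacle is the bookkeeping of exponents in the final optimisation, and ensuring that the GBG constant sections used to reduce the vector-valued integral to a scalar quantity interact correctly with the supports in $\DyQ^\jscale$; this is where the hypothesis $t\leq\scale$ (so that $\Q$ sits inside a single GBG cube $\parent{\Q}$) becomes essential. The cancellation and commutator estimates themselves involve only the smooth connection $\conn$ on $\cM$ and do not see the non-smooth GBG trivialisation of $\cV$, so the cutoff construction is unhindered by the dyadic boundaries.
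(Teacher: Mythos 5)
Your approach is exactly the one the paper invokes: the paper's ``proof'' of this lemma consists of the single sentence that it is the same as the proof of Lemma~5.9 in \cite{Morris3}, which is precisely the cutoff-plus-cancellation argument you describe (ultimately going back to Lemma~5.6 of \cite{AKMC}). The three ingredients you identify---a smooth cutoff adapted to the inner set $\Q_s$, the cancellation hypothesis (H7) applied to $\phi_s u$, and the commutator estimate from (H6)---are the right ones, and you are also correct that the whole argument sees only the smooth structure of $\cM$ via $\conn\phi_s$, not the non-smooth GBG trivialisation, and that the role of $t\leq\scale$ is only to guarantee the componentwise reading of the cube integral in a single GBG chart $\parent\Q$.

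The one substantive gap is in the bookkeeping that you explicitly defer. First, your bound on the commutator term does not exploit the fact that $\Mul_{\phi_s}$ is supported in the annulus $\Q\setminus\Q_s$ (where $\conn\phi_s\neq 0$). Applying Cauchy--Schwarz there, as in \cite{AKMC} and \cite{Morris3}, together with $\mu(\Q\setminus\Q_s)\lesssim s^\eta\mu(\Q)$ from Theorem~\ref{Thm:Dya:Christ}\,(vi), gives $\bigl|\fint_\Q \Mul_{\phi_s}u\bigr|\lesssim s^{\eta/2-1}\len(\Q)^{-1}\bigl(\fint_\Q|u|^2\bigr)^{1/2}$, and without this extra factor of $s^{\eta/2}$ the subsequent optimisation in $s$ does not produce the exponents in the statement. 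Second, with your stated intermediate inequality (an $\Lp{1}$ average in the commutator term and an $\Lp{2}$ average in the annulus term), balancing the two $s$-dependent terms gives exponents of the form $\eta/(2+\eta)$ and $2/(2+\eta)$ with prefactor $\len(\Q)^{-\eta/(2+\eta)}$, not the claimed $\eta/2$, $1-\eta/2$ and $\len(\Q)^{-\eta}$; so the assertion that the optimisation ``recovers the claimed interpolated bound'' is not automatic. You should carry out the optimisation (remembering the constraint $s\leq 1$, which is exactly where the additive $\fint_\Q|u|^2$ term is needed to absorb the case where the optimal $s$ exceeds $1$) and check that the resulting exponents match the lemma as stated; if they do not, that is worth flagging, since the downstream use of this lemma in the second principal term and Carleson estimates depends on which averages and powers appear.
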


The proof of this lemma is
the same as that
of the proof of Lemma 5.9 in \cite{Morris3}.
Thus, we deduce the following.

\begin{proposition}[Second principal term estimate]
For all $u \in \Lp{2}(\cV)$, we have 
$$ \int_{0}^{\hscale} 
	\norm{\pri_t \Av_t(P_t - I)u}^2 \frac{dt}{t} \lesssim \norm{u}^2.$$
\end{proposition}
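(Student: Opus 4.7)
The strategy follows the familiar AKM--Morris template, with the GBG structure replacing the Euclidean one. The plan is to reduce the estimate to the scalar-valued quadratic estimate for $Q_tu$ (which is automatic since $\Pi$ is self-adjoint) by using the cancellation in $I-P_t$, together with a uniform operator bound for $\pri_t \Av_t$.

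First I would establish the uniform $L^2$-bound $\norm{\pri_t \Av_t v} \lesssim \norm{v}$ for all $0 < t \leq \scale$. Decomposing $\pri_t \Av_t v = \sum_{\Q \in \DyQ_t} \Theta_t^B(\omega_{\Q,v})$, where $\omega_{\Q,v}$ is the GBG constant section with value $v_{\Q}$ on $\parent{\Q}$, the bound follows by applying the off-diagonal estimates of Proposition~\ref{Prop:Harm:Offdiag} for $\Theta_t^B$ over the annuli $2^{j+1}B_{\Q}\setminus 2^j B_{\Q}$, using Lemma~\ref{Lem:Harm:Indest} to control the overlap of the dilated balls, and finally the standard $L^2$-boundedness of $\Av_t$ recorded in \S\ref{Sect:Prelim:GBG}.

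Second, I would exploit the identity $I - P_t = t^2 \Pi^2 P_t = (t\Pi)\,Q_t$, which gives
$$\pri_t \Av_t (P_t - I) u = -t\,\pri_t \Av_t\bigl(\Pi\, Q_t u\bigr).$$
On each $\Q \in \DyQ_t$ one has $\len(\Q) \simeq t$ and $\Pi Q_t u = (I-P_t)u/t$, so Lemma~\ref{Lem:Harm:Upsilon} applied with $\Upsilon = \Pi$ and the vector $Q_t u$ bounds $\bigl|\fint_{\Q} \Pi Q_t u\bigr|$ pointwise by an interpolated expression of the form
$$t^{-\eta}\cbrac{\fint_{\Q} \modulus{Q_t u}}^{\eta/2}\cbrac{\tfrac{1}{t}\fint_{\Q} \modulus{(I-P_t)u}}^{1-\eta/2} \;+\; \fint_{\Q}\modulus{Q_t u}^2.$$

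Third, I would combine this pointwise bound with Step~1 and Hölder's inequality in $\Q$, and integrate in $t$. The factors of $t$ produced by $\len(\Q)^{-\eta}$ are exactly compensated by the prefactor $t$ (from $-t\,\pri_t\Av_t(\Pi Q_t u)$) and by the $dt/t$ weight, leaving a convolution-in-$t$ expression that is handled by a Schur or Hardy type inequality. The resulting $L^2$-norms in $t$ are then controlled by the self-adjoint quadratic estimate $\int_0^\infty \norm{Q_t u}^2\,\tfrac{dt}{t} \lesssim \norm{u}^2$, which holds unconditionally because $\Pi = \Gamma + \adj{\Gamma}$ is self-adjoint and hence admits a Borel functional calculus; $\norm{(I-P_t)u}$ is bounded by $2\norm{u}$ and disappears into the interpolation exponents.

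The main obstacle I anticipate is the bookkeeping in the last step, specifically arranging the Hölder exponent $\eta$ from Lemma~\ref{Lem:Harm:Upsilon} so that the resulting $t$-kernel lies in the Schur class and the final norm reduces cleanly to the $\Pi$-quadratic estimate for $Q_tu$. The additional term $\fint_{\Q}|Q_tu|^2$ must also be handled, but it is lower order and is absorbed directly using Step~1 and the self-adjoint quadratic estimate. Everything else is essentially identical to the scalar case treated in \cite{Morris3}, with the GBG coordinates only entering through the definition of $\pri_t$ and $\Av_t$ that we already set up in \S\ref{Sect:Prelim:GBG}.
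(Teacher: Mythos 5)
Your outline is essentially the paper's proof: the paper simply refers to Proposition 5.10 of \cite{Morris3}, whose argument proceeds exactly via the three ingredients you identify — a uniform $\Lp{2}$-bound for $\pri_t\Av_t$ obtained from the off-diagonal estimates of Proposition~\ref{Prop:Harm:Offdiag} together with Lemma~\ref{Lem:Harm:Indest}, the identity $(I-P_t)u = t\Pi Q_tu$ combined with the cancellation Lemma~\ref{Lem:Harm:Upsilon}, and a Schur-type interpolation that reduces the estimate to the spectral quadratic estimate $\int_0^\infty\norm{Q_tu}^2\,\tfrac{dt}{t}\lesssim\norm{u}^2$ for the self-adjoint $\Pi$. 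The only imprecisions are cosmetic: the decomposition of $\pri_t\Av_t v$ over cubes needs indicator cutoffs $\ind{\Q}$ on each summand, and the final Schur/Hardy bookkeeping — which you correctly flag as the delicate point — is where the details of Morris's proof actually live.
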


The proof of this proposition is 
similar to the proof of
Proposition 5.10 in \cite{Morris3}
with the principle difference
being that we only consider $t \leq \hscale$.

We have demonstrated how to
estimate the two principal terms of \eqref{quad2}. 

\subsection{Carleson measure estimate}
 
We are left with the task of estimating
the final term of \eqref{quad2}. 
We follow in the footsteps
of \cite{AHLMcT}, \cite{AKMC} remarking that,
in a sense, it is to preserve
the main thrust of this Carleson argument
that we have constructed the various
technologies in this paper.
We show in this section that the argument runs
as before with some changes that are possible
as a consequence of the geometric assumptions
which we have made.

In \S5 of \cite{Morris3}, Morris illustrates
how to prove 
$$A \mapsto \int_{A} \modulus{\pri_t(x)}^2\ d\mu(x)\frac{dt}{t}$$
is a local Carleson measure.
At the heart of this proof lies
a certain test function.
Here, we illustrate how to use
the GBG condition
to  set up a suitable substitute
test function 
$\tf_{\Q,\epsilon}^w$. The main 
complication is to
choose a cutoff in a way that
we stay inside GBG coordinates
of each cube. 

Let $\tau > 1$ be chosen 
later. We note that we can find a 
smooth function $\eta_{\Q}: \cM \to [0,1]$
such that $\eta = 1$ on $B(x_{\Q}, \tau C_{1}\len(\Q))$
and $\eta = 0$ on $\cM \setminus B(x_{\Q}, 2 \tau C_{1}\len(\Q))$
and satisfying the gradient bound
$\modulus{\conn \eta} \lesssim \frac{1}{\len(\Q)}.$
The constant here depends on $\tau$.

Let $\parent{\Q}$ be the GBG cube
with centre $x_{\parent{\Q}}$.
We want to use $\eta$ to perform a
cutoff inside the ball $B(x_{\Q},\brad)$.
That is,
we want the ball 
$B(x_{\Q},2 \tau C_{1}\len(\Q)) \subset B(x_{\Q},\brad)$.
A simple calculation then yields
that we need to choose $\tau < 3$. 
Thus, for the sake of the argument, 
let us fix $\tau = 2$.

Next, let $v \in \bddlf(\C^N)$
and $\modulus{v} = 1$.
Let $\hat{w}, w \in \C^N$
such that $\modulus{\hat{w}} = \modulus{w} = 1$ 
and $\adj{v}(\hat{w}) = w$.
 Let $\tilde{w} = \eta\, w$.
Certainly, we have that
$\tilde{w} \in \Lp{2}(\cV)$
since $\tilde{w} = 0$ outside
of $B(x_{\parent{\Q}}, \brad)$.
Now, fix $\epsilon > 0$ and define
$$
\tf_{\Q,\epsilon}^w = \tilde{w} - \imath \epsilon \len(\Q) 
	\Gamma R_{\epsilon \len(\Q)}^B \tilde{w}
	= (1 + \imath \epsilon \len(\Q) \adj{\Gamma}_B R_{\epsilon \len(\Q)}^B) \tilde{w}.$$
This allows us to prove a lemma
similar to that of Lemma 5.12 in \cite{Morris3}.

\begin{lemma}
There exists $c > 0$ such that for all 
$\Q \in \DyQ_t$ with $t \leq \hscale$,
$$ \norm{\tf_{\Q,\epsilon}^w} \leq c \mu(\Q)^{\frac{1}{2}}
,\quad
\iint_{\CBox_{\Q}} \modulus{\Theta_{t}^B\tf_{\Q,\epsilon}^w}^2\ d\mu\frac{dt}{t}
	\leq c \frac{\mu(\Q)}{\epsilon^2}
\quad\text{and}\quad
\modulus{\fint_{\Q} \tf_{\Q,\epsilon}^w - w\ d\mu } 
	\leq c \epsilon^{\frac{\eta}{2}}.$$
\end{lemma}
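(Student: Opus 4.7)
The plan is to establish each of the three estimates in turn. Setting $s = \epsilon\len(\Q)$, first observe that $\tilde{w} = \eta\, w$ is supported in $B(x_{\Q}, 4C_1\len(\Q)) \subset B(x_{\parent{\Q}},\brad)$ with $\modulus{\tilde{w}} \lesssim 1$ (by the GBG metric equivalence), so (\ref{Def:Pre:Eloc}) yields $\norm{\tilde{w}}^2 \lesssim \mu(\Q)$; moreover $\eta \equiv 1$ on $\Q$ (since $\diam \Q \leq C_1\len(\Q)$), so $\tilde{w}$ agrees pointwise with the constant GBG section $w$ on $\Q$. For the $\Lp{2}$ bound on $\tf$, I would write $\tf = \tilde{w} - is\Gamma R^B_s\tilde{w}$ and invoke the uniform operator bound $\norm{s\Gamma R^B_s} \lesssim 1$, which follows from the bisectoriality of $\Pi_B$ combined with (H1)--(H3) as in \S4 of \cite{AKMC}; this gives $\norm{\tf} \lesssim \norm{\tilde{w}} \lesssim \mu(\Q)^{1/2}$ directly.

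For the Carleson box estimate, I would exploit the second form of the test function: the resolvent identity $R^B_s + is\Pi_B R^B_s = I$ yields $\tf = (I + is\adj{\Gamma}_B)R^B_s\tilde{w}$. Applying $\Theta_t^B = t\adj{\Gamma}_B P_t^B$ and invoking $(\adj{\Gamma}_B)^2 = 0$ (which follows from (H3)) together with the commutation $\adj{\Gamma}_B P_t^B = P_t^B\adj{\Gamma}_B$ (since both operators commute with $\Pi_B^2$) annihilates the term $is\Theta_t^B \adj{\Gamma}_B R^B_s \tilde{w}$, leaving $\Theta_t^B \tf = \Theta_t^B R^B_s \tilde{w}$. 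Combining the uniform bound $\norm{\Theta_t^B R^B_s} \lesssim 1$ from Proposition~\ref{Prop:Harm:Offdiag} with the decay bound $\norm{\Theta_t^B R^B_s} \lesssim t/s$ (obtained from the factorisation $\Theta_t^B R^B_s = (t/s)(s\adj{\Gamma}_B R^B_s)P_t^B$ via commutation and the uniform bound $\norm{s\adj{\Gamma}_B R^B_s} \lesssim 1$), I obtain $\norm{\Theta_t^B R^B_s} \lesssim \min(1,t/s)$, whence
$$\iint_{\CBox_{\Q}}\modulus{\Theta_t^B \tf}^2\, d\mu\,\frac{dt}{t} \leq \norm{\tilde{w}}^2 \int_0^{\len(\Q)} \min(1, t/s)^2\, \frac{dt}{t} \lesssim \mu(\Q)\log(1/\epsilon) \lesssim \mu(\Q)/\epsilon^2.$$

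For the cube-average estimate, since $\tilde{w} = w$ pointwise on $\Q$, the first form of $\tf$ yields $\fint_{\Q}(\tf - w)\, d\mu = -is \fint_{\Q} \Gamma R^B_s \tilde{w}\, d\mu$. I would apply Lemma~\ref{Lem:Harm:Upsilon} with $\Upsilon = \Gamma$ and $u = R^B_s \tilde{w}$, together with the $\Lp{2}$-based bounds $\fint_{\Q}\modulus{R^B_s\tilde{w}}\, d\mu \lesssim 1$ and $\fint_{\Q}\modulus{\Gamma R^B_s\tilde{w}}\, d\mu \lesssim 1/\len(\Q)$; the latter comes from writing $R^B_s = P_s^B(I - is\Pi_B)$, using that $\Gamma$ commutes with $P_s^B$, and bounding $\norm{\Gamma \tilde{w}} \lesssim \mu(\Q)^{1/2}/\len(\Q)$ via H6 and the pointwise control on $\conn e^i$. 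The interpolation delivered by the lemma, followed by multiplication by $s = \epsilon\len(\Q)$, then produces the desired $\epsilon^{\eta/2}$.

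The principal obstacle is the Carleson box estimate: the $dt/t$ integration down to $t = 0$ requires genuine decay of $\Theta_t^B R^B_s$ as $t \to 0$, which is delivered only after the algebraic reduction $\Theta_t^B \tf = \Theta_t^B R^B_s \tilde{w}$ and the subsequent factorisation through $s\adj{\Gamma}_B R^B_s$; without the nilpotency $(\adj{\Gamma}_B)^2 = 0$ from (H3), neither step is available and the integral would diverge near the origin.
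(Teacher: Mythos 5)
Your overall strategy matches the Morris--AKMC approach that the paper cites: use the two forms of $\tf_{\Q,\epsilon}^w$, the nilpotency $(\adj{\Gamma}_B)^2 = 0$ to kill the $\adj{\Gamma}_B$-term under $\Theta_t^B$, a decay bound in $t$ for the Carleson box integral, and the cancellation Lemma~\ref{Lem:Harm:Upsilon} for the cube average. The algebraic identity you use --- namely that $\tilde{w} - is\Gamma R_s^B\tilde{w} = (I + is\adj{\Gamma}_B)R_s^B\tilde{w}$ with $s = \epsilon\len(\Q)$ --- is the correct reading (the paper's display is missing the application of $R_s^B$ before $\tilde{w}$). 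Your $\min(1,t/s)$ bound on $\norm{\Theta_t^B R_s^B}$, obtained by commuting $\adj{\Gamma}_B$ through $P_t^B$, actually yields a logarithmic bound $\mu(\Q)\log(1/\epsilon)$, sharper than the stated $\mu(\Q)/\epsilon^2$; both are fine for the subsequent Carleson argument.

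However, the third estimate has a gap. You claim $\fint_{\Q}\modulus{\Gamma R_s^B\tilde{w}}\, d\mu \lesssim 1/\len(\Q)$ by writing $R_s^B = P_s^B(I - is\Pi_B)$ and commuting $\Gamma$ through $P_s^B$, bounding $\norm{\Gamma\tilde{w}}$. Distributing this identity against $\tilde{w}$ produces a leftover term $isP_s^B\Gamma\Pi_B\tilde{w} = isP_s^B\Gamma\adj{\Gamma}_B\tilde{w}$ that you do not address, and which requires $\tilde{w} \in \dom(\adj{\Gamma}_B) = \dom(\adj{\Gamma}B_2)$; since $B_2$ is merely $\Lp{\infty}$, $B_2\tilde{w}$ need not lie in $\dom(\adj{\Gamma})$, so this decomposition is not available. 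Moreover, $\norm{\Gamma\tilde{w}}$ involves $\Gamma$ applied to a GBG constant section, which is not controlled by (H1)--(H8) alone: (H6) gives only the commutator bound $\modulus{[\Gamma,\eta_{\Q}]} \lesssim \modulus{\conn\eta_{\Q}}$ and says nothing about $\Gamma w$ itself, and the control $\modulus{\conn e^i} \leq C_G$ is a hypothesis of the application (Theorem~\ref{Thm:App:KatoVB}(iv)), not of the abstract framework in which this lemma sits. The route actually available from (H1)--(H8) and \S4 of \cite{AKMC} is the uniform resolvent bound $\norm{s\Gamma R_s^B} \lesssim 1$, which gives $\fint_{\Q}\modulus{\Gamma R_s^B\tilde{w}} \lesssim 1/s = 1/(\epsilon\len(\Q))$. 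One should then substitute this (together with $\fint_{\Q}\modulus{R_s^B\tilde{w}} \lesssim 1$) into Lemma~\ref{Lem:Harm:Upsilon} and check carefully that multiplying by $s$ produces a bound of the form $\epsilon^{\beta}$ uniformly over all $\Q \in \DyQ_t$; with the bounds you state and the lemma as displayed, the surviving quantity is $\epsilon^{\eta/2}\len(\Q)^{-\eta/2}$, which is not uniformly controlled since $\len(\Q)$ has no lower bound in the truncated dyadic scale --- so the final ``then produces the desired $\epsilon^{\eta/2}$'' does not follow as written and needs to be reconciled against the exact form of the cancellation estimate in \cite{Morris3}.
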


The proof of this lemma 
proceeds exactly as the proof
of Lemma 5.12 in \cite{Morris3}.
The last estimate relies
upon Lemma \ref{Lem:Harm:Upsilon}.

Setting $\epsilon = \cbrac{\frac{1}{2c}}^{\frac{\eta}{2}}$
we obtain the same conclusion as the author
of \cite{Morris3} that
$$
\re \inprod{w, \fint_{\Q} f_{\Q}^w} \geq \frac{1}{2}$$
where we have set $f_{\Q}^w = f_{\Q,\epsilon}^w$.
Furthermore,
a stopping time argument
as in Lemma 5.11 in \cite{AKMC}
yields the following.

\begin{lemma}
Let $t_3 = \min\set{\hscale, \frac{C_{\Theta}}{4a^3 \lambda}}$.
There exists $\alpha, \beta > 0$ such that
for all $\Q \in \DyQ_t$ with $t \leq t_3$, 
there exists a collection of
subcubes $\set{\Q[k]} \subset \union_{t \leq t_3} \DyQ_t$
of $\Q$ such that 
$E_{\Q} = \Q \setminus \union_k \Q[k]$ satisfies
$\mu(E_{\Q}) \geq \beta \mu(\Q)$
and
$E^\ast_{\Q} = \CBox_{\Q} \setminus \union_k \CBox_{\Q[k]}$
satisfies
$$
\re \inprod{w, \fint_{\Q'} f^w_{\Q}} \geq \alpha
\quad\text{and}\quad
\fint_{\Q'}  \modulus{f_{\Q}^w} \leq \frac{1}{\alpha}$$
whenever $\Q' \in \union_{t \leq t_3} \DyQ_t$
with $\Q' \subset \Q$ and 
$\CBox_{\Q'} \intersect E^{\ast}_{\Q} = \emptyset$.
\end{lemma}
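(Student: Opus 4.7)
The plan is to apply a Calder\'on--Zygmund stopping-time argument to $f_{\Q}^w$, adapting Lemma 5.11 of \cite{AKMC} to the present dyadic/GBG framework. Fix a small parameter $\alpha > 0$ to be chosen at the end, and let $\{\Q[k]\}$ be the collection of all maximal dyadic subcubes of $\Q$ in $\union_{s\leq t_3}\DyQ_s$ for which at least one of the two \emph{bad} conditions
$$\re\inprod{w,\, {\textstyle\fint_{\Q[k]}} f_{\Q}^w\ d\mu} < \alpha
\qquad\text{or}\qquad
{\textstyle\fint_{\Q[k]}} \modulus{f_{\Q}^w}\ d\mu > \tfrac{1}{\alpha}$$
holds. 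By maximality these cubes are pairwise disjoint, and any $\Q' \in \union_{s\leq t_3}\DyQ_s$ with $\Q' \subset \Q$ not contained in any $\Q[k]$ automatically satisfies both good inequalities with constant $\alpha$; this is the characterisation of the ``good'' cubes in terms of the sawtooth region $E^\ast_{\Q}$ appearing in the statement. So the only real content is to choose $\alpha$ so that $\mu(E_{\Q}) \geq \beta\mu(\Q)$ for some $\beta > 0$.

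Decompose $\union_k \Q[k] = B_1 \union B_2$, where $B_1$ is the union of stopped cubes where the first bad condition triggers and $B_2$ those where the second triggers. For $B_2$, Chebyshev's inequality combined with the $\Lp{2}$-bound $\norm{f_{\Q}^w} \leq c\mu(\Q)^{1/2}$ from the previous lemma gives
$$\mu(B_2) \leq \alpha\int_{B_2}\modulus{f_{\Q}^w}\,d\mu \leq \alpha\,\mu(\Q)^{1/2}\norm{f_{\Q}^w} \leq c\alpha\,\mu(\Q).$$
For $B_1$, split $\int_{\Q} f_{\Q}^w\,d\mu = \int_{B_1} f_{\Q}^w\,d\mu + \int_{\Q\setminus B_1} f_{\Q}^w\,d\mu$, pair with $w$, and use the stopping criterion on $B_1$ together with Cauchy--Schwartz on the complement; the lower bound $\re\inprod{w,\fint_{\Q} f_{\Q}^w\,d\mu} \geq \tfrac{1}{2}$ from the previous lemma then yields
$$\tfrac{1}{2}\mu(\Q) \leq \alpha\mu(B_1) + c\mu(\Q)^{1/2}\mu(\Q\setminus B_1)^{1/2}.$$
This forces $\mu(\Q\setminus B_1) \gtrsim \mu(\Q)$ as soon as $\alpha$ is small. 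Combining with the bound on $\mu(B_2)$ and then taking $\alpha$ small enough produces $\beta > 0$ with $\mu(E_{\Q}) \geq \beta\mu(\Q)$.

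The main subtlety, which is more a matter of bookkeeping than a genuine obstacle, is to keep the entire argument inside one GBG chart. The test function $f_{\Q}^w$ is built from the GBG-constant section $\tilde{w} = \eta\, w$ supported in $B(x_{\parent{\Q}},\brad)$, and every step above --- the $\Lp{2}$ bound, the average pairings, and the Cauchy--Schwartz estimates --- needs to be read inside the common local trivialisation over $\parent{\Q}$ so that the scalar pairing with $w$ is coordinate-independent. The constraint $t_3 \leq \brad/5 \leq \scale$ forces every candidate stopping subcube $\Q[k] \subset \Q$ to lie inside this single GBG chart, after which the combinatorial core of the argument (maximality, disjointness, Chebyshev, Cauchy--Schwartz) is identical to the Euclidean stopping-time construction of \cite{AKMC}. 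The exponential doubling factor from \eref{Def:Pre:Eloc} does not reappear at this stage, having already been absorbed into the choice of $t_3$ and the size bounds on $f_{\Q}^w$ proved in the previous lemma.
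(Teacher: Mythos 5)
Your argument is correct and matches the approach the paper takes, which is simply to cite the stopping-time argument of Lemma 5.11 in \cite{AKMC}; you have written out the Calder\'on--Zygmund decomposition that the paper defers to that reference, and the Chebyshev/Cauchy--Schwartz bookkeeping and the choice of $\alpha$ small enough to control $\mu(B_1)$ and $\mu(B_2)$ against the lower bound $\re\inprod{w,\fint_{\Q} f_{\Q}^w}\geq \tfrac12$ is exactly the intended argument. (One incidental remark: the lemma as printed says $\CBox_{\Q'}\intersect E^{\ast}_{\Q} = \emptyset$, but your ``good cube'' characterisation correctly treats the condition as $\CBox_{\Q'}\intersect E^{\ast}_{\Q} \neq \emptyset$, which is clearly what is meant.)
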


Let $\sigma > 0$ to be chosen later
and let $\nu \in \bddlf(\C^N)$
with $\modulus{\nu} = 1$ and define
$$K_{\nu,\sigma} = \set{ \nu' \in \bddlf(\C^N) \setminus\set{0}: 
	\modulus{ \frac{ \nu'}{\modulus{\nu'}} - \nu} \leq \sigma}.$$

\begin{proposition}
Let $t_3 = \min\set{\hscale, \frac{C_{\Theta}}{4a^3 \lambda}}$.
There exists $\sigma, \beta, c > 0$ such that
for all $\Q \in \DyQ_t$ with $t \leq t_3$, 
and $\nu \in \bddlf(\C^N)$ with $\modulus{\nu} = 1$,
there exists a collection of
subcubes $\set{\Q[k]} \subset \union_{t \leq t_3} \DyQ_t$
of $\Q$ such that 
$E_{\Q} = \Q \setminus \union_k \Q[k]$ satisfies
$\mu(E_{\Q}) \geq \beta \mu(\Q)$
and
$E^\ast_{\Q} = \CBox_{\Q} \setminus \union_k \CBox_{\Q[k]}$
satisfies
$$
\iint_{(x,t) \in E^\ast_{\Q},\ \pri_t(x) \in K_{\nu,\sigma}} 
	\modulus{\pri_t(x)}^2\ d\mu(x) \frac{dt}{t} \leq c \mu(\Q).$$
\end{proposition}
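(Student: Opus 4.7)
The plan is to combine the stopping-time lemma immediately preceding the statement with the ``cone-of-influence'' trick familiar from the Kato conjecture, suitably adapted to the GBG framework. Given $\nu \in \bddlf(\C^N)$ with $\modulus{\nu}=1$, I would first select unit vectors $\hat w, w \in \C^N$ with $\adj{\nu}(\hat w) = w$, which is possible since $\modulus{\adj{\nu}}=1$ attains its norm. This $w$ is then fed into the preceding stopping-time lemma, producing the collection $\{\Q[k]\}$, the set $E_\Q$ with $\mu(E_\Q) \geq \beta \mu(\Q)$, and the sawtooth region $E^*_\Q$, together with the averaged test-function bounds. Thus the structural conclusions on $E_\Q$ and $E^*_\Q$ come for free; only the Carleson-type integral estimate has to be produced.

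The pointwise input on $E^*_\Q$ comes from combining the stopping-time conclusion with the cone hypothesis. For $(x,t) \in E^*_\Q$ there is a unique $\Q' \in \DyQ_t$ with $x \in \Q'$ and $\CBox_{\Q'} \cap E^*_\Q \neq \emptyset$, and the preceding lemma supplies $\re\inprod{w, (f_\Q^w)_{\Q'}} \geq \alpha$ and $\modulus{(f_\Q^w)_{\Q'}} \leq 1/\alpha$. Using $\adj{\nu}(\hat w) = w$, this rewrites as $\re\inprod{\hat w, \nu (f_\Q^w)_{\Q'}} \geq \alpha$, giving $\modulus{\nu (f_\Q^w)_{\Q'}} \geq \alpha$. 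Choosing $\sigma < \alpha^2/2$, on $K_{\nu,\sigma}$ we obtain
$$\modulus{\pri_t(x)(f_\Q^w)_{\Q'}} \geq \modulus{\pri_t(x)}\bigl(\modulus{\nu (f_\Q^w)_{\Q'}} - \sigma \modulus{(f_\Q^w)_{\Q'}}\bigr) \gtrsim \modulus{\pri_t(x)}.$$
Since $(f_\Q^w)_{\Q'}$ is, by definition of $\Av_t$, the value $\Av_t f_\Q^w(x)$ for $x \in \Q'$, and since acting on the GBG constant section equal to $(f_\Q^w)_{\Q'}$ on the parent GBG cube $\parent{\Q'}$ is precisely what $\pri_t$ does, one has $\pri_t(x)(f_\Q^w)_{\Q'} = (\pri_t \Av_t f_\Q^w)(x)$. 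Therefore
$$\iint_{\substack{(x,t) \in E^*_\Q \\ \pri_t(x) \in K_{\nu,\sigma}}} \modulus{\pri_t(x)}^2 \, d\mu(x)\,\frac{dt}{t} \lesssim \iint_{\CBox_\Q} \modulus{\pri_t \Av_t f_\Q^w}^2\, d\mu\,\frac{dt}{t}.$$

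The final step is to dominate the right-hand side by $\mu(\Q)$. Split $\pri_t \Av_t f_\Q^w = \Theta_t^B f_\Q^w + (\pri_t \Av_t - \Theta_t^B)f_\Q^w$. The first term is controlled by the test-function estimate $\iint_{\CBox_\Q}\modulus{\Theta_t^B f_\Q^w}^2\,d\mu\,dt/t \leq c\mu(\Q)/\epsilon^2$ from the preceding lemma, and the chosen $\epsilon$ is a fixed constant, so this contributes $\lesssim \mu(\Q)$. The second, principal-error term is estimated by re-running the argument of the first principal term estimate (off-diagonal bounds together with the dyadic \Poincare inequality (H8)-1 and cancellation (H7) via Lemma~\ref{Lem:Harm:Upsilon}) applied to $f_\Q^w$ in place of $P_t u$, using $\norm{f_\Q^w} \lesssim \mu(\Q)^{1/2}$ from the preceding lemma to close the bound.

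The main obstacle I expect is the principal-error term: unlike the first principal term estimate, here the input $f_\Q^w$ is not of the form $P_t u$, so one cannot directly use boundedness of $t\Pi P_t$ on $\Lp{2}$. The trick is to use the off-diagonal decay of $\Theta_t^B$ and $\Av_t$ to localise to an enlarged Carleson box (shrunk by the scale cut $t_3 \leq \brad/5$ so as to stay within GBG charts), where the dyadic \Poincare inequality converts the $\modulus{f_\Q^w - (f_\Q^w)_{\Q'}}^2$ errors into $t^2(\modulus{\Xi f_\Q^w}^2 + \modulus{f_\Q^w}^2)$; the $\Xi$-term is handled by writing $f_\Q^w = \tilde w + \imath\epsilon\len(\Q)\adj{\Gamma}_B R^B_{\epsilon\len(\Q)}\tilde w$ and invoking (H8)-2 together with the resolvent bounds. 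The condition $t \leq t_3$ with $t_3 = \min\{\brad/5, C_\Theta/(4a^3\lambda)\}$ is exactly what is needed so that the geometric series in the off-diagonal sum converges.
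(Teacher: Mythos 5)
Your outline reproduces the standard AKMC/Morris argument that the paper implicitly invokes (it defers to ``the argument immediately following Proposition~5.11 in \cite{Morris3}''), and most of the steps are right: choosing $\hat w,w$ with $\adj{\nu}\hat w = w$; feeding $w$ into the preceding stopping-time lemma; obtaining $\re\inprod{\hat w,\,\nu\,(f_\Q^w)_{\Q'}}\geq\alpha$ and $\fint_{\Q'}|f_\Q^w|\leq 1/\alpha$ for the relevant $\Q'$ (you have the correct version of the condition, $\CBox_{\Q'}\intersect E^\ast_\Q\neq\emptyset$, which is in fact a sign error in the paper's statement of the lemma); the cone inequality with $\sigma\lesssim\alpha^2$; the identity $\pri_t(x)(f_\Q^w)_{\Q'}=(\pri_t\Av_t f_\Q^w)(x)$; and the reduction to $\iint_{\CBox_\Q}|\pri_t\Av_t f_\Q^w|^2\,d\mu\,\tfrac{dt}{t}\lesssim\mu(\Q)$, with the $\Theta_t^B f_\Q^w$ piece absorbed by the test-function lemma.

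The genuine soft spot is the principal-error piece $\iint_{\CBox_\Q}|(\Theta_t^B-\pri_t\Av_t)f_\Q^w|^2\,d\mu\,\tfrac{dt}{t}$, and your proposed route through (H8)-2 does not close as stated. Both halves of (H8) are stipulated only for $u\in\dom(\Pi)\intersect\ran(\Pi)$, and $f_\Q^w$ --- in particular its constant part $\tilde w=\eta\,\omega$ --- is not in $\ran(\Pi)$ in general, so you cannot write $\norm{\Xi f_\Q^w}^2+\norm{f_\Q^w}^2\lesssim\norm{\Pi f_\Q^w}^2$ wholesale. The bound you actually need, $\norm{\Xi f_\Q^w}\lesssim \mu(\Q)^{1/2}/\len(\Q)$, has to be assembled piecewise: for the part $\imath\epsilon\len(\Q)\Gamma R^B_{\epsilon\len(\Q)}\tilde w\in\ran(\Gamma)$, one can argue within the $\Pi$-framework and use resolvent bounds together with $\norm{\tilde w}\lesssim\mu(\Q)^{1/2}$; for the part $\tilde w$ itself, the error $\Theta_t^B\tilde w-\pri_t\Av_t\tilde w$ is instead rewritten as $\Theta_t^B(\tilde w-\omega)$ (where $\omega$ is the full GBG constant section, on which $\pri_t\Av_t$ exactly equals the extended $\Theta_t^B$), and $\tilde w-\omega$ vanishes on $B(x_\Q,\tau C_1\len(\Q))$, so the contribution over $\Q'\subset\Q$ is controlled by off-diagonal decay alone, with no Poincar\'e or coercivity needed. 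You flagged the obstacle, but the fix you sketched (apply (H8)-2 to $f_\Q^w$) is the one step that would fail; the $\tilde w$ part must be handled by the cancellation-plus-off-diagonal argument rather than by the abstract coercivity.
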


Then, the argument immediately
following Proposition 5.11 in \cite{Morris3}
illustrates that
$$ \iint_{\CBox_{\Q}} \modulus{\pri_t(x)}^2\ d\mu(x) \frac{dt}{t}
	\lesssim \mu(\Q)$$ 
and this is the required 
Carleson-measure estimate.

On choosing $t_0 = \min\set{t_2,t_3}$ and applying Proposition \ref{Prop:Harm:AvCarl}, we 
find that we have bounded all three terms on the right of \eqref{quad2}, and hence deduced the estimate \eqref{quad} of Proposition \ref{Prop:Ass:Main}: 
$$
\int_{0}^{t_0} \norm{\Theta_{t}^B P_t u}^2\  \frac{dt}{t}
	\lesssim \norm{u}^2.$$

\begin{proof}[Proof of Theorem \ref{Thm:Ass:Main}]
The invariance of (H1)-(H8) upon
replacing $(\Gamma,B_1, B_2)$
by $(\adj{\Gamma},B_2,B_1)$, $(\adj{\Gamma},\adj{B}_2,B_1)$,
and $(\adj{\Gamma}, \adj{B}_1, \adj{B}_2)$
proves Theorem \ref{Thm:Ass:Main}
via Proposition \ref{Prop:Ass:Main}.
\end{proof} 
 
By establishing Theorem \ref{Thm:Ass:Main}, 
we are now in a position to enjoy the 
full thrust of its consequences. 
The first is the Kato square
root type estimate for perturbations of 
Dirac type operators on vector bundles
as listed in Corollary \ref{Cor:Ass:Main}.
A consequence of these corollaries
is the Kato square root problem for
vector bundles, as described
in Theorem \ref{Thm:App:KatoVB}, 
the Kato square root problem for
finite rank tensors as described in Theorem
\ref{Thm:App:KatoTen}
and lastly, 
the highlighted theorem
of this paper, Theorem \ref{Thm:Int:KatoFn},
the Kato square root problem
for functions. 
Furthermore, we also can enjoy the 
holomorphic dependency results of 
\S\ref{Sect:Res:Stab}, in particular, Corollary 
\ref{Cor:Res:Stab},
which illustrates the stability of the functional 
calculus under small perturbations.

\section{Extension to Measure Metric Spaces}

In this section, we extend the
quadratic estimates to a setting where
$\cM$ is replaced by an exponentially locally doubling
measure metric space $\Spa$.
As a consequence, we also drop the
smoothness assumption on the vector bundle $\cV$.
Similar quadratic estimates
on doubling measure metric spaces
for trivial bundles are obtained by the
first author in \cite{B}.

To be precise, let $\Spa$ be a complete metric space with 
metric $d$ and let $d\mu$ be a
Borel-regular exponentially locally doubling 
measure. 
That is, we assume that
$d\mu$ satisfies (\ref{Def:Pre:Eloc})
with $\Spa$ in place of $\cM$.
The underlying space now lacks
a differentiable structure and
it no longer makes sense to 
ask the  local trivialisations 
 and the metric $\mh$ 
to be smooth.
Instead, we simply require them to be
continuous.
However, we remark that
in applications, the  local trivialisations 
 would normally be Lipschitz.
The fact that $d\mu$ is
Borel implies that the  local trivialisations are 
measurable. Furthermore, we assume that $\cV$ satisfies the GBG 
condition.

With the exception of (H6),
no changes need be made to the 
hypotheses to (H1)-(H8) in this
new setting.
To define a suitable (H6), we
first define the following 
quantity as in \cite{B}.

\begin{definition}[Pointwise Lipschitz constant]
\index{Pointwise Lipschitz constant}
For $\xi: \Spa \to \C^N$ Lipschitz, define
$\Lipp \xi: \Spa \to \R$ by
$$\Lipp \xi(x) = \limsup_{y \to x} \frac{\modulus{\xi(x) - \xi(y)}}{d(x,y)}.$$
We take the convention that
$\Lipp \xi (x) = 0$ when $x$ is an isolated point.
\end{definition}

We then define (H6) as in \cite{B}.

\begin{enumerate}
\item[(H6)]
	For every bounded Lipschitz function $\xi: \Spa \to \C$,
	multiplication by $\xi$ preserves
	$\dom(\Gamma)$ and 
	$\Mul_{\xi} = [\Gamma, \xi I]$
	is a multiplication operator.
	Furthermore, there exists a constant $m > 0$ such that
	$\modulus{\Mul_{\xi} (x)} \leq m \modulus{\Lipp{\xi}(x)}$
	for almost all $x \in \Spa$.
\end{enumerate}

Thus, we have the following theorem.
\begin{theorem}
\label{Thm:MM:Main}
Let $\Spa$ be a complete metric
space equipped with a Borel-regular
measure $d\mu$ satisfying (\ref{Def:Pre:Eloc}).
Suppose that that $(\Gamma,B_1,B_2)$
satisfy (H1)-(H8). Then,
$\Pi_B$ satisfies the quadratic
estimate
$$
\int_{0}^\infty \norm{Q_t^B u}^2\ \frac{dt}{t}
	\simeq \norm{u}^2$$ 
for all $u \in \close{\ran(\Pi_B)} \subset \Lp{2}(\cV)$
and hence has a bounded holomorphic functional 
calculus.
\end{theorem}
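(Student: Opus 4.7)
The plan is to observe that the proof of Theorem \ref{Thm:Ass:Main} laid out in \S\ref{Sec:Harm} is almost entirely a harmonic-analytic argument built on top of the abstract hypotheses (H1)--(H8), the dyadic structure of Theorem \ref{Thm:Dya:Christ}, and the GBG coordinates. Since Theorem \ref{Thm:Dya:Christ} was originally proved by Christ on doubling measure metric spaces, and since (\ref{Def:Pre:Eloc}) is exactly what is needed to get the truncated dyadic system on $\Spa$, the dyadic machinery is available verbatim. One then verifies that each step in \S\ref{Sec:Harm} transfers to $\Spa$ by replacing the smooth objects used there with their metric-space analogues. The operator-theoretic reduction in Proposition \ref{Prop:Ass:Main} uses nothing beyond (H1)--(H3) together with the bound $\norm{u} \lesssim \norm{\Pi u}$ on $\ran(\Pi)$ (which follows from (H8)-2), so it carries over unchanged, and the splitting \eqref{quad2} is again valid.

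Next I would check the three ingredients that feed the splitting. The off-diagonal bounds of Proposition \ref{Prop:Harm:Offdiag} were established in \cite{AKMC} and \cite{Morris3} using only (H1)--(H6), and the only place smoothness enters is through the construction of cut-off functions in the commutator $\Mul_\eta$. Using the new metric-space version of (H6) with bounded Lipschitz $\xi$ and the control $\modulus{\Mul_\xi(x)} \leq m\,\Lipp\xi(x)$, the same induction/interpolation argument produces the exponential off-diagonal bounds (this is done in \cite{B}). The volume-covering lemma \lemref{Lem:Harm:Indest} uses only (\ref{Def:Pre:Eloc}) and carries over. Consequently the first principal term estimate, which combines the off-diagonal bounds with (H8)-1 and (H8)-2, goes through verbatim. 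The second principal term estimate rests on \lemref{Lem:Harm:Upsilon}, whose proof uses only (H7) together with dyadic averaging, so it is also unchanged.

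The serious step is the Carleson measure estimate. The reduction from the Carleson term in \eqref{quad2} to the estimate $\iint_{\CBox_\Q}\modulus{\pri_t}^2\,d\mu\,\tfrac{dt}{t}\lesssim \mu(\Q)$ goes through Proposition \ref{Prop:Harm:AvCarl}, whose proof is measure-theoretic and works on any exponentially locally doubling space. The heart of the proof is the construction of the test function $\tf_{\Q,\epsilon}^w = (1 + i\epsilon\len(\Q)\adj{\Gamma}_B R_{\epsilon\len(\Q)}^B)\tilde w$ with $\tilde w = \eta\,w$. The manifold proof used a smooth cutoff $\eta_\Q$ concentrated in a ball $B(x_\Q,2\tau C_1 \len(\Q)) \subset B(x_\Q,\brad)$, and I replace it by the bounded Lipschitz function
\begin{equation*}
\eta_\Q(x) = \max\set{0,\,\min\set{1,\,2 - d(x,x_\Q)/(\tau C_1\len(\Q))}},
\end{equation*}
whose pointwise Lipschitz constant satisfies $\Lipp\eta_\Q(x) \lesssim 1/\len(\Q)$. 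The new (H6) then provides the same $\norm{\Mul_\eta} \lesssim 1/\len(\Q)$ control that was used in the manifold argument; the estimates of the three-part lemma bounding $\norm{\tf_{\Q,\epsilon}^w}$, $\iint_{\CBox_\Q}\modulus{\Theta_t^B \tf_{\Q,\epsilon}^w}^2$ and $\modulus{\fint_\Q \tf_{\Q,\epsilon}^w - w}$ are reproduced with this replacement. From here the stopping-time argument and the covering by sectors $K_{\nu,\sigma}$ are purely set-theoretic/measure-theoretic and transfer without change.

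The main obstacle, and the only place where any real adaptation is needed, is precisely the point just addressed: producing a bounded Lipschitz cutoff $\eta_\Q$ supported inside the GBG ball whose commutator with $\Gamma$ is controlled via the metric-space (H6). Once that is in place, the three bounds on $\tf_{\Q,\epsilon}^w$ and hence the full Carleson estimate follow by the same computations as in the smooth case, so that combining the two principal-term estimates with the Carleson-term estimate via Proposition \ref{Prop:Harm:AvCarl} and Proposition \ref{Prop:Ass:Main} yields the quadratic estimate for $\Pi_B$ and therefore the bounded holomorphic functional calculus on $\close{\ran(\Pi_B)}$.
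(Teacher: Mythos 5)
Your proposal is correct and takes essentially the same approach as the paper's own (very brief) proof, which simply cites \cite{Morris} \S4.3 for the local Carleson theory on exponentially doubling metric measure spaces, the Lipschitz separation Lemma~5.1 of \cite{B} for the off-diagonal estimates, and \cite{B} for the adapted test-function construction and the proof of Lemma~\ref{Lem:Harm:Upsilon}; your explicit Lipschitz cutoff and your identification of the single place where smoothness intervenes supply exactly the details the paper delegates to those references. One small imprecision: you assert that Lemma~\ref{Lem:Harm:Upsilon} uses only (H7) together with dyadic averaging and is therefore unchanged, but its proof (modelled on Lemma~5.9 of \cite{Morris3}) also employs a cutoff function and (H6), and the paper itself flags this lemma as one that must be re-argued in the metric-space setting via \cite{B} --- fortunately the Lipschitz-cutoff machinery you construct supplies precisely the needed ingredient, so the conclusion is unaffected, but the attribution should be corrected.
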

\begin{proof}
First, we note that
much of the local Carleson theory 
was originally proved by Morris in \S4.3 of \cite{Morris}
in the setting of an exponentially
doubling measure metric space.  
Next, the off-diagonal estimates can be obtained
by using the Lipschitz separation Lemma 5.1 
in \cite{B}. Also, the construction 
of the test function and the proof of 
Lemma \ref{Lem:Harm:Upsilon} proceeds
similar to the argument in \cite{B}.
Thus the arguments of \S\ref{Sec:Harm}
hold and the theorem is proved by 
Proposition \ref{Prop:Ass:Main}.
\end{proof}

As before, we have the following corollaries.
The $E_B^{\pm}$ are the spectral subspaces
defined in \S\ref{Sect:Res}.

\begin{corollary}[Kato square root type estimate]
\label{Cor:MM:Main}
{ \hfill }
\begin{enumerate}[(i)]
\item There is a spectral decomposition
	$$\Lp{2}(\cV) = \nul(\Pi_B) \oplus E_B^{+} \oplus E_B^{-}$$
	(where the sum is in general non-orthogonal), and
\item 	$\dom(\Gamma) \intersect \dom(\adj{\Gamma}_B) = \dom(\Pi_B) = \dom(\sqrt{{\Pi_B}^2})$
	with
	$$\norm{\Gamma u} + \norm{\adj{\Gamma}_B u} 
		\simeq \norm{\Pi_B u} 	
		\simeq \|\sqrt{{\Pi_B}^2}u\|$$
	for all $u \in \dom(\Pi_B)$.
\end{enumerate} 
\end{corollary}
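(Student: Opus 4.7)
The plan is to deduce this corollary directly from Theorem \ref{Thm:MM:Main} in exactly the same manner that Corollary \ref{Cor:Ass:Main} was obtained from Theorem \ref{Thm:Ass:Main}. Once the bounded holomorphic functional calculus is in hand, the underlying metric measure structure of $\Spa$ plays no further role; the argument is purely operator-theoretic, run at the abstract Hilbert space level of the preliminaries on sectorial operators.

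For part (i), $\Pi_B$ is $\omega$-bisectorial by (H1)-(H3), and Theorem \ref{Thm:MM:Main} provides a bounded holomorphic functional calculus on $S_\mu^o \cup \set{0}$ for any $\omega < \mu < \frac{\pi}{2}$. Since $\chi^{\pm}, \sgn \in \Hol^\infty(S_\mu^o)$, the operators $\chi^{\pm}(\Pi_B)$ are bounded projections, and as recorded in the preliminaries this yields the topological direct sum $\Lp{2}(\cV) = \nul(\Pi_B) \oplus \ran(\chi^{+}(\Pi_B)) \oplus \ran(\chi^{-}(\Pi_B))$. Setting $E_B^{\pm} = \ran(\chi^{\pm}(\Pi_B))$ gives the desired decomposition, which is generally non-orthogonal because $\Pi_B$ is not self-adjoint.

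For part (ii), the equivalence $\norm{\Gamma u} + \norm{\adj{\Gamma}_B u} \simeq \norm{\Pi_B u}$ on $\dom(\Pi_B) = \dom(\Gamma) \intersect \dom(\adj{\Gamma}_B)$ follows from the analogous direct sum $\Hil = \nul(\Pi_B) \oplus \close{\ran(\Gamma)} \oplus \close{\ran(\adj{\Gamma}_B)}$ produced by the functional calculus, together with the nilpotency of $\Gamma$ and $\adj{\Gamma}_B$ (which forces $\Gamma u$ and $\adj{\Gamma}_B u$ to be extracted by bounded projections from $\Pi_B u$). For the square root equivalence, both $\sqrt{z^2}$ and $\sgn(z)$ belong to $\Hol^\infty(S_\mu^o)$ in the extended sense, and on $\dom(\Pi_B)$ one has the identities
$$\Pi_B u = \sgn(\Pi_B) \sqrt{\Pi_B^2}\, u \quad\text{and}\quad \sqrt{\Pi_B^2}\, u = \sgn(\Pi_B)\, \Pi_B u.$$
Combined with the boundedness of $\sgn(\Pi_B)$, these give $\dom(\Pi_B) = \dom(\sqrt{\Pi_B^2})$ with equivalent norms.

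There is no genuine obstacle here beyond observing that every input used in the proof of Corollary \ref{Cor:Ass:Main} has an analogue in the metric measure setting: bisectoriality of $\Pi_B$ is purely operator-theoretic from (H1)-(H3), and the bounded functional calculus is exactly the content of Theorem \ref{Thm:MM:Main}. Consequently, the proof is essentially a transcription, and one could simply write ``The proof is identical to that of Corollary \ref{Cor:Ass:Main}, now invoking Theorem \ref{Thm:MM:Main} in place of Theorem \ref{Thm:Ass:Main}.''
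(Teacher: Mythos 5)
Your proposal is correct and mirrors the paper's approach, which simply transfers Corollary~\ref{Cor:Ass:Main} to the metric measure setting by invoking Theorem~\ref{Thm:MM:Main} in place of Theorem~\ref{Thm:Ass:Main}; the key ingredients are the boundedness of $\chi^{\pm}(\Pi_B)$ and $\sgn(\Pi_B)$ supplied by the functional calculus, together with the identities $\Pi_B u = \sgn(\Pi_B)\sqrt{\Pi_B^2}\,u$ and $\sqrt{\Pi_B^2}\,u = \sgn(\Pi_B)\Pi_B u$ on $\dom(\Pi_B)$. One minor attribution slip in your write-up: the Hodge-type decomposition $\Hil = \nul(\Pi_B) \oplus \close{\ran(\adj{\Gamma}_B)} \oplus \close{\ran(\Gamma)}$, and with it the equivalence $\norm{\Gamma u} + \norm{\adj{\Gamma}_B u} \simeq \norm{\Pi_B u}$ and the identity $\dom(\Pi_B) = \dom(\Gamma)\intersect\dom(\adj{\Gamma}_B)$, are standard consequences of the operator-theoretic hypotheses (H1)--(H3) alone (as noted by the paper's pointer to \S4 of \cite{AKMC}) and do not require the bounded holomorphic functional calculus; only the square-root equivalence in part (ii) and the spectral decomposition in part (i) actually invoke Theorem~\ref{Thm:MM:Main}.
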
 

\begin{corollary}
Let $\Hil, \Gamma, B_1, B_2, \kappa_1, \kappa_2$ satisfy
(H1)-(H8) and take $\eta_i < \kappa_i$. 
Set $0 < \hat{\omega}_i < \frac{\pi}{2}$ by 
$\cos\hat{\omega}_i = \frac{ \kappa_i - \eta_i}{\norm{B_i}_\infty + \eta_i}$
and $\hat{\omega} = \frac{1}{2}(\hat{\omega}_1 + \hat{\omega}_2)$.
Let $A_i \in \Lp{\infty}(\bddlf(\cV))$ satisfy
\begin{enumerate}[(i)]
\item $\norm{A_i}_\infty \leq \eta_i$,
\item $A_1 A_2 \ran(\Gamma), B_1 A_2 \ran(\Gamma),
	A_1 B_2 \ran(\Gamma) \subset \nul(\Gamma)$, and 
\item $A_2 A_1 \ran(\adj{\Gamma}), B_2 A_1 \ran(\adj{\Gamma}),
	A_2 B_1 \ran(\adj{\Gamma}) \subset \nul(\adj{\Gamma})$.
\end{enumerate}
Letting $\hat{\omega} < \mu < \frac{\pi}{2}$,
we have:
\begin{enumerate}[(i)]
\item for all $f \in \Hol^\infty(S_{\mu}^o)$,
	$$\norm{f(\Pi_B) - f(\Pi_{B+A})}
	\lesssim (\norm{A_1}_\infty + \norm{A_2}_\infty)
	\norm{f}_\infty,\ \text{and}$$
\item for all $\psi \in \Psi(S_{\mu}^o)$,
	$$\int_{0}^\infty \norm{\psi(t\Pi_{B})u - \psi(t\Pi_{B + A})u}^2 \frac{dt}{t}
	\lesssim (\norm{A_1}_\infty^2 + \norm{A_2}_\infty^2)
	\norm{u},$$
	whenever $u \in \Hil$.
\end{enumerate}
The implicit constants depend on (H1)-(H8) and $\eta_i$. 
\end{corollary}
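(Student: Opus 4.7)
The strategy is to follow the proof of Corollary \ref{Cor:Res:Stab}, which itself adapts Theorem 6.5 of \cite{AKMC}, but with Theorem \ref{Thm:MM:Main} playing the role of Theorem \ref{Thm:Ass:Main}.

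First I would introduce the holomorphic family $\tilde{B}_1(\zeta) = B_1 + \zeta A_1$, $\tilde{B}_2(\zeta) = B_2 + \zeta A_2$, parametrised by $\zeta$ in a small open disc $U \subset \C$ containing the segment $[0,1]$, and verify that $(\Gamma, \tilde{B}_1(\zeta), \tilde{B}_2(\zeta))$ satisfies (H1)--(H8) uniformly in $\zeta \in U$. Conditions (H1) and (H4)--(H8) transfer automatically, since they either do not involve $B_i$ at all, or depend only on $\Lp{\infty}$ bounds that remain uniform on $U$. For (H2) the accretivity constants become $\kappa_i - |\zeta|\eta_i$ on the appropriate ranges, and the explicit definition of $\hat{\omega}_i$ in the statement is precisely what is required to ensure that $\Pi_{\tilde{B}(\zeta)}$ is bisectorial of angle at most $\hat{\omega}$, uniformly for $\zeta$ in a disc comfortably containing $[0,1]$.

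The crucial point is (H3). Expanding the product,
$$\tilde{B}_1(\zeta)\tilde{B}_2(\zeta) = B_1 B_2 + \zeta(B_1 A_2 + A_1 B_2) + \zeta^2 A_1 A_2,$$
each of the four summands maps $\ran(\Gamma)$ into $\nul(\Gamma)$: the first by (H3) for $(B_1, B_2)$, the remaining three by hypothesis (ii). The analogous computation using (iii) gives $\tilde{B}_2(\zeta)\tilde{B}_1(\zeta)\ran(\adj{\Gamma}) \subset \nul(\adj{\Gamma})$. This is the sole substantive step, and is precisely the amendment to the argument of \cite{AKMC} flagged in the proof of Corollary \ref{Cor:Res:Stab}; I expect it to be the only real obstacle.

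Given uniform validity of (H1)--(H8), Theorem \ref{Thm:MM:Main} supplies a bounded holomorphic functional calculus for each $\Pi_{\tilde{B}(\zeta)}$ on the common sector $S_\mu^o \cup \set{0}$, with a bound independent of $\zeta \in U$. The proofs of Theorem \ref{Thm:Res:HolDep} and of its square-function counterpart use only resolvent identities together with the uniform functional calculus, and so transfer to the present setting verbatim, yielding holomorphy of $\zeta \mapsto f(\Pi_{\tilde{B}(\zeta)})$ and of $\zeta \mapsto \Sq_{\tilde{B}(\zeta)}(\psi)$ on $U$. Applying Cauchy's integral formula to these holomorphic operator-valued functions on a circle around $\zeta = 0$ and evaluating at $\zeta = 1$ then yields both (i) and (ii), with the linear and quadratic dependence on $\norm{A_1}_\infty + \norm{A_2}_\infty$ recovered in the standard way by rescaling the perturbation.
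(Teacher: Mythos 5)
Your proof is correct and matches the paper's intended route: as for its twin Corollary~\ref{Cor:Res:Stab}, the argument is that of Theorem~6.5 in \cite{AKMC}, with Theorem~\ref{Thm:MM:Main} supplying the uniformly bounded holomorphic functional calculus for $\Pi_{\tilde B(\zeta)}$ and Theorem~\ref{Thm:Res:HolDep} and its square-function counterpart supplying holomorphy of $\zeta \mapsto f(\Pi_{\tilde B(\zeta)})$ and $\zeta \mapsto \Sq_{\tilde B(\zeta)}(\psi)$. You correctly isolate the one substantive amendment --- that hypotheses~(ii) and~(iii) are exactly what is needed so that $(\Gamma, B_1 + \zeta A_1, B_2 + \zeta A_2)$ satisfies (H3) --- which is precisely the point the paper flags in the proof of Corollary~\ref{Cor:Res:Stab}.
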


\bibliographystyle{amsplain}
\def\cprime{$'$}
\providecommand{\bysame}{\leavevmode\hbox to3em{\hrulefill}\thinspace}
\providecommand{\MR}{\relax\ifhmode\unskip\space\fi MR }
\providecommand{\MRhref}[2]{%
  \href{http://www.ams.org/mathscinet-getitem?mr=#1}{#2}
}
\providecommand{\href}[2]{#2}

\setlength{\parskip}{0mm}

\end{document}